\definecolor{winered}{rgb}{0.7,0,0}
\definecolor{lessblue}{rgb}{0,0,0.7}
\newcommand{\myitem}[3]{\item[#2]\def\@currentlabel{#3}\label{#1}}
\def\@tocline#1#2#3#4#5#6#7{
\begingroup
  \par
    \parindent\z@ \leftskip#3 \relax \advance\leftskip\@tempdima\relax
                  \rightskip\@pnumwidth plus 4em \parfillskip-\@pnumwidth
    \ifcase #1 
       \vskip 0.6em \hskip 0em 
       \or
       \or \hskip 0em 
       \or \hskip 1em 
    \fi%
    %
    #6
    %
    \nobreak\relax{\leavevmode\leaders\hbox{\,.}\hfill}
    \hbox to\@pnumwidth {\@tocpagenum{#7}}
  \par
\endgroup
}
 \def\l@section{\@tocline{0}{0pt}{0pc}{}{}}
\renewcommand{\tocsection}[3]{%
  \indentlabel{\@ifnotempty{#2}{ 
    \ignorespaces\bfseries{#2. #3}}}
  \indentlabel{\@ifempty{#2}{\ignorespaces\bfseries{#3}}{}} 
    \vspace{1.5pt}}
\renewcommand{\tocsubsection}[3]{%
  \indentlabel{\@ifnotempty{#2}{
    \ignorespaces#2. #3}}
  \indentlabel{\@ifempty{#2}{\ignorespaces #3}{}}
    \vspace{1.5pt}}
\renewcommand{\tocsubsubsection}[3]{%
  \indentlabel{\@ifnotempty{#2}{
    \ignorespaces#2. #3}}
  \indentlabel{\@ifempty{#2}{\ignorespaces #3}{}}
    \vspace{1.5pt}}
\def\@nomenstarted{0}
\newlength{\@nomenoldtabcolsep}
\newcommand{\nomenstart}
  {%
    \def\@nomenstarted{1}%
    \setlength{\@nomenoldtabcolsep}{\tabcolsep}%
    \setlength{\tabcolsep}{3.5pt}%
    \begin{longtable}{p{0.11\textwidth} p{0.86\textwidth}}
  }
\newcommand{\nomenitem}[2]{%
    \ifcase\@nomenstarted%
      \or 
      \or \\ 
    \fi%
    #1\,{\leavevmode\leaders\hbox{\,.}\hfill} & #2%
    \def\@nomenstarted{2}%
  }%
\newcommand{\nomenend}
  {\\%
      \end{longtable}%
      \setlength{\tabcolsep}{\@nomenoldtabcolsep}%
      \def\@nomenstarted{0}%
  }
\newcommand{\vast}{\bBigg@{4}}
\newcommand{\Vast}{\bBigg@{5}}
\numberwithin{equation}{section}
\numberwithin{figure}{section}
\newtheorem{thm}{Theorem}[section]
\newtheorem{prop}[thm]{Proposition}
\newtheorem{lemma}[thm]{Lemma}
\newtheorem*{thm*}{Theorem}
\newtheorem*{prop*}{Proposition}
\newtheorem*{cor*}{Corollary}
\newtheorem*{conj*}{Conjecture}
\theoremstyle{definition}
\newtheorem{definition}[thm]{Definition}
\theoremstyle{remark}
\newtheorem{rmk}[thm]{Remark}
\newtheorem{example}[thm]{Example}
\newcommand{\mc}{\mathcal}
\newcommand{\cA}{\mc A}
\newcommand{\cB}{\mc B}
\newcommand{\cC}{\mc C}
\newcommand{\cD}{\mc D}
\newcommand{\cE}{\mc E}
\newcommand{\cF}{\mc F}
\newcommand{\cL}{\mc L}
\newcommand{\cM}{\mc M}
\newcommand{\cO}{\mc O}
\newcommand{\cS}{\mc S}
\newcommand{\cU}{\mc U}
\newcommand{\cV}{\mc V}
\newcommand{\cW}{\mc W}
\newcommand{\cX}{\mc X}
\newcommand{\ms}{\mathscr}
\newcommand{\sD}{\ms D}
\newcommand{\N}{\mathbb{N}}
\newcommand{\R}{\mathbb{R}}
\newcommand{\Sph}{\mathbb{S}}
\newcommand{\sfp}{\mathsf{p}}
\newcommand{\sfr}{\mathsf{r}}
\newcommand{\sfH}{\mathsf{H}}
\newcommand{\fM}{\mathfrak{M}}
\newcommand{\fX}{\mathfrak{X}}
\newcommand{\codim}{\operatorname{codim}}
\renewcommand{\Im}{\operatorname{Im}}
\newcommand{\supp}{\operatorname{supp}}
\newcommand{\rank}{\operatorname{rank}}
\newcommand{\eps}{\epsilon}
\newcommand{\la}{\langle}
\newcommand{\ol}{\overline}
\newcommand{\pa}{\partial}
\newcommand{\ra}{\rangle}
\newcommand{\ul}[1]{\underline{#1}{}}
\newcommand{\wh}{\widehat}
\newcommand{\wt}{\widetilde}
\newcommand{\bop}{{\mathrm{b}}}
\newcommand{\cp}{{\mathrm{c}}}
\newcommand{\cuop}{{\mathrm{cu}}}
\newcommand{\Diff}{\mathrm{Diff}}
\DeclareMathOperator{\Op}{Op}
\newcommand{\Diffb}{\Diff_\bop}
\newcommand{\Psib}{\Psi_\bop}
\newcommand{\Vcu}{\cV_\cuop}
\newcommand{\Diffcu}{\Diff_\cuop}
\newcommand{\Psicu}{\Psi_\cuop}
\newcommand{\WF}{\mathrm{WF}}
\newcommand{\Ell}{\mathrm{Ell}}
\newcommand{\esssupp}{\mathrm{ess}\supp}
\newcommand{\WFcu}{\WF_{\cuop}}
\newcommand{\Ellcu}{\mathrm{Ell}_{\cuop}}
\newcommand{\Tb}{{}^{\bop}T}
\newcommand{\Tcu}{{}^{\cuop}T}
\newcommand{\Scu}{{}^{\cuop}S}
\newcommand{\half}{{\tfrac{1}{2}}}
\newcommand{\bhm}{{\mathfrak m}}
\newcommand{\bha}{{\mathbf a}}
\newcommand{\loc}{{\mathrm{loc}}}
\newcommand{\CI}{\cC^\infty}
\newcommand{\CIdot}{\dot\cC^\infty}
\newcommand{\CIc}{\cC^\infty_\cp}
\newcommand{\Hb}{H_{\bop}}
\newcommand{\Hcu}{H_{\cuop}}
\newcommand{\openbigpmatrix}[1]
  {%
    \def\@bigpmatrixsize{#1}%
    \addtolength{\arraycolsep}{-#1}%
    \begin{pmatrix}%
  }
\newcommand{\closebigpmatrix}
  {%
    \end{pmatrix}%
    \addtolength{\arraycolsep}{\@bigpmatrixsize}%
  }
\newcommand{\usref}[1]{{\upshape\ref{#1}}}
\newcommand{\inclfig}[1]{\includegraphics{#1}}
\newcounter{@enumsave}
\newcommand{\ctrsave}{\setcounter{@enumsave}{\theenumi}}
\newcommand{\ctrset}{\setcounter{enumi}{\the@enumsave}}
\begin{document}

\title[Trapping on asymptotically stationary spacetimes]{Normally hyperbolic trapping on asymptotically stationary spacetimes}


\subjclass[2010]{Primary 37D05, 58J47, Secondary 58J40, 83C57, 35L05}

\date{\today}

\author{Peter Hintz}
\address{Department of Mathematics, Massachusetts Institute of Technology, Cambridge, MA 02139-4307, USA}
\email{phintz@mit.edu}

\date{\today}

\begin{abstract}
  We prove microlocal estimates at the trapped set of asymptotically Kerr spacetimes: these are spacetimes whose metrics decay inverse polynomially in time to a stationary subextremal Kerr metric. This combines two independent results. The first one is purely dynamical: we show that the stable and unstable manifolds of a decaying perturbation of a time-translation-invariant dynamical system with normally hyperbolic trapping are smooth and decay to their stationary counterparts. The second, independent, result provides microlocal estimates for operators whose null-bicharacteristic flow has a normally hyperbolic invariant manifold, under suitable non-degeneracy conditions on the stable and unstable manifolds; this includes operators on closed manifolds, as well as operators on spacetimes for which the invariant manifold lies at future infinity.
\end{abstract}

\maketitle


\section{Introduction}
\label{SI}

This paper has two independent parts: in the first (\S\ref{SM}), we study perturbations of dynamical systems which exhibit normally hyperbolic trapping `at infinity', and in the second (\S\ref{SE}) we prove microlocal estimates for solutions of pseudodifferential equations whose null-bicharacteristic flow has this dynamical structure. The application tying the two together---the main motivation for the present paper---concerns the study of waves on perturbations of Kerr black holes (\S\ref{SK}).

We first describe the dynamical result. Let $\cX$ denote a closed manifold, and let $\bar V\in\cV(\cX)$ be a smooth vector field which is tangent to a submanifold $\Gamma$, which we call the \emph{trapped set}; suppose the $\bar V$-flow is $r$-normally hyperbolic at $\Gamma$ for every $r$, see~\S\ref{SsMF}. By classical theorems of Fenichel \cite{FenichelInvariant} and Hirsch--Pugh--Shub \cite{HirschPughShubInvariantManifolds}, there exist smooth stable/unstable manifolds $\bar\Gamma^{s/u}$ near $\Gamma$ to which $\bar V$ is tangent. Extending this to a dynamical system on the `spacetime'
\[
  \cM = \R_t \times \cX,\quad
  V_0 = \frac{\pa}{\pa t} + \bar V,
\]
$V_0$ is tangent to the spacetime trapped set $\Gamma_0=\R_t\times\bar\Gamma$, which has unstable/stable manifolds $\Gamma_0^{u/s}=\R_t\times\bar\Gamma^{u/s}$.

We consider perturbations of this dynamical system which decay as $t\to\infty$: denoting by $\CI_b(\cM)$ the space of functions which are bounded together with all their derivatives along $\pa_t$ and vector fields on $\cX$, we consider
\begin{equation}
\label{EqIPert}
  V = V_0+\wt V,\qquad \wt V \in \rho\CI_b(\cM;T\cX),\ \ \rho(t)=\la t\ra^{-\alpha},\ \alpha>0.
\end{equation}
Thus, $\wt V$ is a `spatial' vector field whose components (in local coordinate systems on $\cX$) decay to zero at the rate $\rho(t)$. Our first result concerns the existence and regularity of the perturbed stable and unstable manifolds:

\begin{thm}
\label{ThmIMfd}
  There exist a stable manifold $\Gamma^s\subset\cM$ and an unstable manifold $\Gamma^u\subset\cM$ to which $V$ is tangent, and so that $\Gamma^{s/u}$ is $\rho\CI_b$-close to $\Gamma^{s/u}_0$. More precisely, there exist open neighborhoods $\cU^{s/u}$ of $\Gamma$ inside of $\bar\Gamma^{s/u}$ such that for $T$ large, $\Gamma^{s/u}\cap t^{-1}((T,\infty))$ is the graph over $(T,\infty)\times\cU^{s/u}$ of a function in $\rho\CI_b((T,\infty)\times\cU^{s/u};N\cU^{s/u})$.
\end{thm}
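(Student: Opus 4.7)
The plan is to construct each of $\Gamma^{s/u}$ as a graph $y = f(t, x)$ over $(T, \infty) \times \cU^{s/u}$ in a tubular neighborhood of $\bar\Gamma^{s/u}$, with $y$ a normal coordinate valued in $N\cU^{s/u}$, and then to obtain $f$ by solving a transport-type PDE. In such coordinates, $V_0$-invariance of $\bar\Gamma^{s/u}$ gives $V_0^y(x, 0) = 0$, hence $V_0^y(x, y) = A(x) y + O(|y|^2)$; normal hyperbolicity implies that $A(x)$ has eigenvalues of uniformly positive real part in the stable case (where $N\cU^s$ realizes the unstable subbundle $E^u$) and uniformly negative real part in the unstable case. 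Writing out the tangency $V(y - f)|_{y=f} = 0$ for $V = V_0 + \wt V$ yields the quasilinear equation
\begin{equation*}
  \partial_t f + b(x) \cdot \nabla_x f - A(x) f = G(t, x, f, \nabla_x f),
\end{equation*}
where $b(x) = V_0^x(x, 0)$ is the reduced flow on $\bar\Gamma^{s/u}$ and $G \in \rho\CI_b + O(|f|^2 + |f|\,|\nabla_x f| + \rho\,|\nabla_x f|)$.

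The heart of the argument is solvability of the linearization $(\partial_t + b \cdot \nabla_x - A) f = g$ with $g \in \rho\CI_b$. Along a characteristic $s \mapsto (t_0 + s, \phi_s(x_0))$ of $\partial_t + b \cdot \nabla_x$, where $\phi_s$ is the flow of $b$, this reduces to the ODE $\dot z = A(\phi_s(x_0)) z + g$, with fundamental solution $\Psi(s, x_0)$. In the stable case, $\|\Psi(s, x_0)^{-1}\| \lesssim e^{-\lambda s}$ for $s \geq 0$, and forward characteristics remain in $\cU^s$ and converge to $\Gamma$, so the unique solution that decays along forward characteristics is
\begin{equation*}
  f(t_0, x_0) = -\int_0^\infty \Psi(s, x_0)^{-1}\, g(t_0 + s, \phi_s(x_0))\, ds,
\end{equation*}
which satisfies $|f(t_0, x_0)| \lesssim \int_0^\infty e^{-\lambda s}\, \rho(t_0 + s)\, ds \lesssim \rho(t_0)$. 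In the unstable case, $\|\Psi(s, x_0)\| \lesssim e^{-\lambda s}$, and backward characteristics on $\bar\Gamma^u$ converge to $\Gamma$ (hence remain in $\cU^u$); imposing $f(T, \cdot) = 0$ and integrating forward via Duhamel again yields $|f(t, x)| \lesssim \rho(t)$.

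Given this linear solvability, the full nonlinear equation is solved by a contraction mapping argument in a ball of $\rho\CI_b$ functions on $(T, \infty) \times \cU^{s/u}$ (controlling both $f$ and $\nabla_x f$); for $T$ sufficiently large, $\rho(T)$ is small, so $G$ (being quadratic in $(f, \nabla_x f)$ or multiplied by $\rho$) contributes only a small perturbation to the linear inverse, and Picard iteration converges. Full smoothness $f \in \rho\CI_b$ is then bootstrapped by commuting the linear operator with $\partial_t^j \partial_x^\alpha$: the derivative $\partial_t^j \partial_x^\alpha f$ satisfies an equation of the same form with a source involving only lower-order derivatives of $f$ already known to lie in $\rho\CI_b$, and the linear existence result closes the induction.

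The main obstacle is obtaining uniform $x_0$-dependent bounds on $\Psi(s, x_0)$ and its derivatives, which drive the Duhamel estimates for every derivative of $f$. The $r$-normal hyperbolicity for every $r$ ensures that $\bar\Gamma^{s/u}$ is $\CI$ and that the reduced flow $\phi_s$ is $\CI$ in $x_0$ on $\cU^{s/u}$; moreover, derivatives of $\Psi(s, x_0)$ in $x_0$ inherit exponential bounds at rates only mildly smaller than $\lambda$ by standard variational ODE arguments, provided the spectral gap is strong enough, which is exactly what $r$-normal hyperbolicity for $r$ large enough guarantees. Consequently the differentiated Duhamel integrals retain the $\rho(t)$ decay with uniform constants, completing the proof.
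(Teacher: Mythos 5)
Your approach is genuinely different from the paper's: you use the Lyapunov--Perron method (Duhamel integration along characteristics), whereas the paper follows the Hadamard graph transform of Hirsch--Pugh--Shub (repeatedly pushing a candidate graph forward by the time-one map, obtaining first a Lipschitz fixed point, then bootstrapping through Lipschitz jets to pointwise differentiability, to $\cC^1$, and inductively to $\cC^r$). Both are standard routes to invariant manifolds, and your identification of the role of $r$-normal hyperbolicity (controlling the competition between decay of the fundamental matrix $\Psi$ and growth of $D\phi_s$ in the differentiated Duhamel integrals) is the correct mechanism. Your Duhamel formulas and the resulting $\rho$-decay estimates at the $\cC^0$ level are sound.

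However, there is a genuine gap in the nonlinear step. You write the tangency condition as the quasilinear transport equation $\partial_t f + b(x)\cdot\nabla_x f - A(x) f = G(t,x,f,\nabla_x f)$, pushing the $\tilde V^x\cdot\nabla_x f$ and $O(|f|)\nabla_x f$ pieces to the right. The linear inverse $L^{-1}$ gains no derivatives, so the Picard map $f\mapsto L^{-1}G(f,\nabla_x f)$ takes $\cC^{k+1}$ data to $\cC^k$ data: to estimate $\|G(f)-G(f')\|_{\cC^1}$ you must differentiate the term $A_2\cdot\nabla_x(f-f')$, which produces $\nabla_x^2(f-f')$, not controlled by the $\cC^1$ norm of $f-f'$. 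The same phenomenon persists at the $\cC^0$ level (the estimate $|G(f)-G(f')|\lesssim|f-f'|$ fails because $\nabla_x(f-f')$ appears), so Picard iteration does not converge in the weighted $\cC^k$ ball as described; smallness of $\rho(T)$ does not cure a loss of derivatives, only a lack of smallness. The standard repair is to keep the $\nabla_x f$ coefficient on the left, i.e.\ integrate along characteristics of the $f$-dependent vector field $\tilde b(t,x,f)=b+O(|f|)+\tilde V^x$, so that the right-hand side is zeroth order in $f$ and the fixed point is set up on trajectories (the usual Lyapunov--Perron formulation). This is essentially a reformulation of your argument, but it is not optional: as written, the scheme does not close. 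The paper's graph-transform route avoids this entirely by running the contraction in the $\cC^0$ norm on a space of Lipschitz sections (where Lipschitz bounds are an invariant \emph{a priori} constraint rather than the contraction norm), precisely to sidestep derivative loss.

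A minor imprecision: the statement that derivatives of $\Psi(s,x_0)$ decay at ``rates only mildly smaller than $\lambda$'' is not quite right; the rate degrades with each base derivative (proportionally to a power of the expansion/contraction on $\Gamma$), and $r$-normal hyperbolicity guarantees it stays positive through order $r$, not that the degradation is mild. For the bootstrap to $\rho\CI_b$ you need the full strength of $r$-normal hyperbolicity for every $r$ — which the hypotheses supply — but the inductive step must track the shifting spectral gap explicitly, as the paper does via its repeated application of Theorem~\ref{ThmMInv}.
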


See Theorem~\ref{ThmMFlow} for the full statement. Our proof of Theorem~\ref{ThmIMfd} is an application of Hadamard's idea, called `graph transform' in \cite{HirschPughShubInvariantManifolds}, for the construction of stable and unstable manifolds, namely the repeated application of the time ($-1$) flow of $V$ to $\Gamma^s_0$, which converges to $\Gamma^s$. (We can closely follow the outstanding presentation of \cite{HirschPughShubInvariantManifolds}.) For $\Gamma^u$, one instead starts with a piece of a $V$-invariant manifold over $t^{-1}((T,T+2))$ and repeatedly applies the time 1 flow.\footnote{Thus, $\Gamma^s$ is a `canonical' object, i.e.\ independent of choices, whereas $\Gamma^u$ is `non-canonical', as it does depend on the choice of some initial $V$-invariant piece.}  In fact, we will deduce Theorem~\ref{ThmIMfd} from an analogous statement about diffeomorphisms on $\cX$ which are normally hyperbolic at $\Gamma$, see Theorem~\ref{ThmMInv}.

Since $t\to\infty$ along integral curves of $V_0$, trapping only really occurs at `$t=\infty$'. One can make this precise by introducing a partial compactification of $\cM$ in which one adds a boundary $\{\tau_e=0\}\cong\cX$, $\tau_e=e^{-t}$. Indeed, $\pa_t=-\tau_e\pa_{\tau_e}$, hence $\bar\Gamma\subset\cX\cong\tau_e^{-1}(0)$ is a $V_0$-invariant set at infinity, with unstable manifold $\bar\Gamma^u\subset\cX\cong\tau_e^{-1}(0)$ and stable manifold (the closure of) $\Gamma^s_0$. From this perspective, the perturbations considered here have size $1/|\log(\tau_e)|^\alpha$, i.e.\ are very far from differentiable. Such singular perturbations can be analyzed because the simple nature of the flow of $V_0$ and $V$ in the $t$-variable; see also Remark~\ref{RmkMCptExp}. Figure~\ref{FigIMap} illustrates Theorem~\ref{ThmIMfd} from this point of view.

\begin{figure}[!ht]
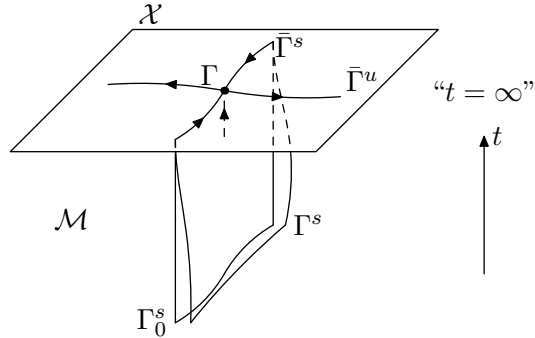

  \centering
  \inclfig{FigIMap}
  \caption{Illustration of Theorem~\ref{ThmIMfd} from a compactified perspective, with the (canonical) stable manifold $\Gamma^s$ shown.}
  \label{FigIMap}
\end{figure}

Our second, independent, result describes the propagation of microlocal Sobolev regularity for solutions of non-elliptic pseudodifferential operators $P\in\Psi^m$ whose null-bi\-char\-ac\-ter\-is\-tic flow, i.e.\ the flow of the Hamilton field $H_p$ of its principal symbol $p$ within the characteristic set $p^{-1}(0)$, has a normally hyperbolic invariant manifold $\Gamma\subset p^{-1}(0)$. We first describe this in a simple setting. Let $X$ be a closed manifold and suppose
\[
  P\in\Psi^m(X),\quad P-P^*\in\Psi^{m-2}(X),
\]
is a classical ps.d.o.\ with principal symbol $p=\sigma(P)$ and characteristic set $\Sigma=p^{-1}(0)\cap(T^*X\setminus o)$; suppose there exists a conic, normally hyperbolic submanifold $\Gamma\subset\Sigma$ for the $H_p$-flow, of codimension $2$ and with conic stable/unstable manifolds $\Gamma^{s/u}\subset\Sigma$ (defined in a small neighborhood of $\Gamma$) of codimension $1$. Assume that the Poisson bracket of defining functions of $\Gamma^{s/u}$ inside of $\Sigma$, extended to functions on $T^*X$, does not vanish at $\Gamma$, and assume that in a neighborhood of $\Gamma$ there exists an order function (non-vanishing and homogeneous of degree $1$ in the fibers of $T^*X\setminus o$) that commutes with $H_p$. Then:

\begin{thm}
\label{ThmIEst}
  Let $v\in\cD'(X)$, $P v=f$, and $s\in\R$. Suppose $\WF^{s+1}(v)\cap(\Gamma^s\setminus\Gamma)=\emptyset$ and $\WF^{s-m+2}(f)\cap\Gamma=\emptyset$. Then $\WF^s(v)\cap\Gamma=\emptyset$. The same conclusion remains valid when instead $\WF^{s+1}(v)\cap(\Gamma^u\setminus\Gamma)=\emptyset$.
\end{thm}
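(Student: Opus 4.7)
I would prove Theorem~\ref{ThmIEst} by a positive-commutator argument at $\Gamma$, in the spirit of Wunsch--Zworski and Dyatlov. Choose defining functions $\phi_s,\phi_u\in\CI(T^*X\setminus o)$ (homogeneous of degree $0$) for $\Gamma^{s/u}\subset\Sigma$. Normal hyperbolicity, together with the condition $\{\phi_s,\phi_u\}|_\Gamma\neq 0$, allows one to normalize these so that
\[
  H_p\phi_s=c_s\phi_s,\qquad H_p\phi_u=-c_u\phi_u
\]
modulo multiples of $p$ and terms quadratic in $(\phi_s,\phi_u)$, with $c_s,c_u>0$ on $\Gamma$. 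The hypothesized order function $\rho$ (homogeneous of degree $1$, $H_p\rho=0$ at the principal level) plays the role of a conic weight. The principal-symbol commutant I would take is
\[
  a=\rho^r\,\chi_1(\phi_u^2/\delta^2)\,\chi_0(\phi_s^2/\epsilon^2),\qquad r:=s-\tfrac{m-1}{2},
\]
with $\chi_1\in\CIc([0,1))$ nonincreasing, $\chi_1(0)=1$, so that $-\chi_1'\geq 0$ and $-\chi_1'=\psi_1^2$ for some $\psi_1\in\CIc$; and $\chi_0$ is a standard bump with $\chi_0(0)=1$ and $\supp\chi_0\subset(-1,1)$.

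A direct computation produces
\[
  \tfrac12 H_p a^2 = \rho^{2r}\Bigl[-\tfrac{2c_u\phi_u^2}{\delta^2}\chi_1'\!\bigl(\tfrac{\phi_u^2}{\delta^2}\bigr)\chi_0^2 + \tfrac{2c_s\phi_s^2}{\epsilon^2}\chi_0\,\chi_0'\!\bigl(\tfrac{\phi_s^2}{\epsilon^2}\bigr)\chi_1^2\Bigr] + O(|\phi|^3).
\]
The first bracketed term is non-negative and carries the principal microlocal gain at $\Gamma$; the second is supported in $\{|\phi_s|\sim\epsilon,\,|\phi_u|\leq\delta\}$, which for $\delta\ll\epsilon$ lies in a conic neighborhood of $\Gamma^s\setminus\Gamma$, where the a priori hypothesis $v\in H^{s+1}$ absorbs it. The cubic remainder is dominated by the leading term upon shrinking the support of $a$, using the strict positivity of $c_s,c_u$ at $\Gamma$. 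Quantizing to $A=\Op(a)$ and pairing the resulting operator identity $i[P,A^*A]=B^*B-E^*E+R$ with $v$ (after the usual regularization, and using $Pv=f$) gives
\[
  \|Bv\|_{L^2}^2\leq \|Ev\|_{L^2}^2+2\bigl|\langle f,A^*A v\rangle\bigr| + \text{(lower-order terms)},
\]
whose right-hand side is finite by the hypotheses on $v$ and $f$ (the pairing with $f$ requires a mild background-regularity estimate on $v$, obtained by iterating the same argument with $s$ replaced in turn by $s-1,s-2,\dots$). The left-hand side then controls $v$ at $\Gamma$; combined with standard propagation of singularities along $H_p$ off $\Gamma$ this yields $\WF^s(v)\cap\Gamma=\emptyset$.

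The main obstacle is the sign analysis of the commutant: unlike classical (non-trapping) propagation, no cutoff $\chi(\phi_s^2+\phi_u^2)$ symmetric in the two hyperbolic directions can have $H_p$-derivative of definite sign near $\Gamma$, since $H_p$ is expanding in the $\phi_s$-direction and contracting in the $\phi_u$-direction. The asymmetric choice above exploits the contraction in $\phi_u$ to obtain a positive main term; the unavoidable wrong-sign contribution comes from differentiating $\chi_0$ in the expanding direction, and the trick is that this term can be pushed into the a priori regularity region $\Gamma^s\setminus\Gamma$ by taking $\delta\ll\epsilon$. A secondary subtlety is that the positive main term vanishes quadratically on $\Gamma^u$, so $B$ is not elliptic at $\Gamma$ in the naive sense; this is exactly the source of the one-derivative loss in the hypothesis $v\in H^{s+1}$ on $\Gamma^s\setminus\Gamma$ (and the matching one-derivative loss for $f$). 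The alternative statement under $\WF^{s+1}(v)\cap(\Gamma^u\setminus\Gamma)=\emptyset$ follows by symmetry: applying the already-established case to $-P$---whose Hamilton field is $-H_p$ and which therefore has $\Gamma^s$ and $\Gamma^u$ interchanged---reduces it to the case just treated.
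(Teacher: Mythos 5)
Your argument does not close, and the gaps are genuine. The first is a labeling error: the wrong-sign term $2c_s(\phi_s^2/\epsilon^2)\chi_1^2\chi_0\chi_0'$ is supported in $\{|\phi_s|\sim\epsilon,\ |\phi_u|\leq\delta\}$; since $\Gamma^u=\{\phi_u=0\}$ and $\Gamma^s=\{\phi_s=0\}$, this is a $\delta$-neighborhood of $\Gamma^u\setminus\Gamma$, \emph{not} of $\Gamma^s\setminus\Gamma$, so the hypothesis $\WF^{s+1}(v)\cap(\Gamma^s\setminus\Gamma)=\emptyset$ gives no control over this error whatsoever. The second gap is fundamental and you noticed it but did not resolve it: the good term $-2c_u(\phi_u^2/\delta^2)\chi_1\chi_1'\chi_0^2$ vanishes identically on $\Gamma^u\supset\Gamma$, so the quantization $B$ satisfies $\Gamma\cap\Ell(B)=\emptyset$, and the estimate $\|Bv\|^2\lesssim\cdots$ asserts nothing about $\WF^s(v)\cap\Gamma$. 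Attributing this to ``the one-derivative loss'' is not a valid inference; a positive-commutator estimate yields microlocal regularity only on the elliptic set of $B$, and there is no mechanism to upgrade control from $\{|\phi_u|\sim\delta\}$ to $\Gamma$: one cannot propagate along $H_p$ (tangent to $\Gamma^u$), and one cannot take $\delta\to 0$ uniformly. The degeneracy is in fact forced: $\phi_s$, $\phi_u$, and (by the order-function hypothesis) $\wh\rho$ all Poisson-commute with $p$ on $\Gamma$, so $H_p a\equiv 0$ on $\Gamma$ for \emph{any} commutant built from them. This is precisely the point of Remark~\ref{RmkESSaddle}, and it rules out every single-commutator argument of the shape you propose.

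The paper's proof (sketched in \S\ref{SsES}) is structurally quite different and rests on three quantitative ingredients. (i) The microlocalization $\Phi^u v$, with $\Phi^u$ a quantization of $\phi^u$, satisfies the \emph{damped} equation $(P-iW^u)\Phi^u v=\Phi^u Pv+Rv$, where $W^u\in\Psi^{m-1}$ quantizes the positive symbol $\wh\rho^{-m+1}w^u$; this damping term furnishes genuine elliptic $H^{s+1}$ control of $\Phi^u v$ \emph{at} $\Gamma$, bypassing the obstruction above. (ii) A quantitative real-principal-type estimate for the zeroth-order operator $\Phi^u$, whose Hamilton flow is transverse to $\Gamma^s$ by the hypothesis $\{\phi^u,\phi^s\}\neq 0$ at $\Gamma$, shows that the $H^s$-mass of $v$ in $\{|\phi^s|\leq 2\delta\}$ is $\lesssim\delta^{1/2}$ times the mass in $\{|\phi^s|\in[\delta,\epsilon_0]\}$ plus $\|\Phi^u v\|_{H^{s+1}}$. (iii) A quantitative propagation estimate for $P$ along the unstable flow inside $\Gamma^u$, over propagation time $\sim\log\delta^{-1}$, bounds the mass in $\{|\phi^s|\in[2\delta,\epsilon_0]\}$ by $(\log\delta^{-1})^{1/2}$ times the mass in $\{|\phi^s|\in[\delta,2\delta]\}$ plus a priori controlled terms. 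Since $\delta^{1/2}(\log\delta^{-1})^{1/2}\to 0$, combining (ii) and (iii) allows the near-$\Gamma$ mass to be absorbed. Your proposal contains none of these ingredients---especially not the crucial damping mechanism (i)---and cannot be repaired without essentially recreating them.
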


(See Example~\ref{ExESEx} for an explicit operator $P$ to which this theorem applies.) One can also allow $P$ to be a principally scalar operator acting on sections of a vector bundle, and one can furthermore allow for a non-trivial skew-adjoint part $\frac{1}{2 i}(P-P^*)\in\Psi^{m-1}$ as long as its principal symbol has a suitable positive upper bound; see Remark~\ref{RmkESSkew} for details. Thus, microlocal regularity can be propagated into the trapped set, where we can control $v$ with two derivatives less relative to elliptic estimates. Recall here that a point $(x_0,\xi_0)\in T^*\R^n$, $\xi_0\neq 0$, does \emph{not} lie in $\WF^s(u)$ for a distribution $u\in\sD'(\R^n)$ iff there exist cutoff functions $\phi\in\CIc(\R^n)$, $\psi\in\CI(\Sph^{n-1})$ with $\phi(x_0)\neq 0$, $\psi(\xi_0/|\xi_0|)\neq 0$, such that $\la\xi\ra^s \psi(\xi/|\xi|)\wh{\phi u}(\xi)\in L^2(\R^n_\xi)$, where $\wh f(\xi)=\int e^{-i x\xi}f(x)d x$ denotes the Fourier transform.

We really prove quantitative estimates for Sobolev norms of the microlocalization of $v$ to a small neighborhood of $\Gamma$: if $B_0,B_1,G\in\Psi^0(X)$ are such that the elliptic set $\Ell(B_0)$ of $B_0$ contains $\Gamma$, furthermore $\WF'(B_0)\subset\Ell(G)$, and all backwards null-bicharacteristics from $\WF'(B_0)$ either tend to $\Gamma$ or enter $\Ell(B_1)$ in finite time, all while remaining in $\Ell(G)$, then
\begin{equation}
\label{EqIEst}
  \|B_0 v\|_{H^s} \leq C\bigl( \|G P v\|_{H^{s-m+2}} + \|B_1 v\|_{H^{s+1}} + \|v\|_{H^{-N}} \bigr).
\end{equation}
See Figure~\ref{FigIEst}.

\begin{figure}[!ht]
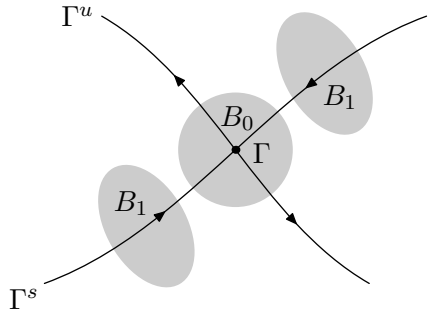

\centering
\inclfig{FigIEst}
\caption{Illustration of Theorem~\ref{ThmIEst}: shown are the elliptic sets of $B_1$ (where we require a priori $H^{s+1}$ control of $u$) and $B_0$ (where we conclude $H^s$ regularity of $u$).}
\label{FigIEst}
\end{figure} 

For $v\in\CI(X)$, such an estimate can be proved by a simple adaptation of the elegant semiclassical argument of Dyatlov~\cite{DyatlovSpectralGaps}, which proceeds by assuming that a constant $C$ for which this estimate holds does not exist, and reaching a contradiction using defect measures \cite{GerardDefect,LionsPaulDefect,TartarHMeasures} and their concentration/Lipschitz properties along $\Gamma^u$. In order to \emph{conclude} regularity of $v$ at $\Gamma$ as in Theorem~\ref{ThmIEst} however, we proceed directly using positive commutator estimates (together with a standard regularization argument); the argument is described in~\S\ref{SsEC} and sketched after Theorem~\ref{ThmIKerr} below. (We remark that the proof takes place entirely in the \emph{standard} pseudodifferential calculus on $X$, i.e.\ it is only utilizes quantizations of symbols in the class $S^m_{1,0}$.) An added benefit, important for applications to nonlinear problems, is the (in principle) quantitative control of the constant $C$ on Sobolev norms of the coefficients of $P$. See Remarks~\ref{RmkESDefect} and \ref{RmkECPTame}.

For us, the main interest lies in analogues of Theorem~\ref{ThmIEst} when the trapped set is `at infinity' as explained after Theorem~\ref{ThmIMfd}. We refer the reader to~\S\ref{SsEC} for the general microlocal result; here, we merely describe the special case of asymptotically Kerr spacetimes. Recall that a (stationary) Kerr spacetime is a manifold
\[
  M^\circ = \R_t \times X,\quad X=(r_+,\infty)_r \times \Sph^2 \subset \R^3_x,
\]
equipped with a certain Lorentzian metric $g_0$ of signature $(+,-,-,-)$ which is stationary: $\cL_{\pa_t}g_0=0$. (The metric, given in~\eqref{EqKMetric}, depends on two real parameters $\bhm,\bha$, the black hole mass and angular momentum; we only consider the \emph{subextremal} case $0\leq|\bha|<\bhm$.) Here, $r_+>0$ is the radius of the event horizon of the black hole. Let $G_0(\zeta)=|\zeta|_{g_0^{-1}}^2$, $\zeta\in T^*M^\circ$, and denote the future component of the characteristic set, i.e.\ the collection of future light cones, by
\[
  \Sigma_0=\{\zeta\in T^*M^\circ\setminus o\colon G_0(\zeta)=0,\ g_0^{-1}(\zeta,d t)>0\}.
\]
The lifted (future) null-geodesic flow of $g_0$ is the flow of the Hamilton vector field $H_{G_0}$ on $\Sigma_0$. Since all notions of interest here are conic in the fibers of $T^*M^\circ$, let us pass to the cosphere bundle $S^*M^\circ=(T^*M^\circ\setminus o)/\R_+$, where $\R_+$ acts by dilations on the fibers; the rescaled vector field $\sfH_{G_0}|_\zeta:=(g_0^{-1}(\zeta,d t))^{-1}H_{G_0}|_\zeta$, which is homogeneous of degree $0$ in the fibers, descends to a vector field on $\Sigma_0$, identified with a subset of $S^*M^\circ$. The feature of interest of the $\sfH_{G_0}$-flow here is the existence of a smooth, conic, flow-invariant trapped set $\Gamma_0=\R_t\times\Gamma\subset\Sigma_0$: null-geodesics in $\Gamma_0$ never escape to $r=r_+$ or $r=\infty$ and instead, when projected to $X$, remain in a compact subset of $X$. The $\sfH_{G_0}$-flow in $\Sigma_0$ is $r$-normally hyperbolic for every $r$ at $\Gamma_0$, as described in~\S\S\ref{SsMF} and \ref{SK}. This was first observed by Wunsch and Zworski~\cite{WunschZworskiNormHypResolvent} for slowly rotating Kerr black holes, and proved in the full subextremal range by Dyatlov~\cite{DyatlovWaveAsymptotics}; see also \cite{VasyMicroKerrdS} for the Kerr--de~Sitter case. The unstable/stable manifolds
\[
  \Gamma^{u/s}_0 = \R_t\times\bar\Gamma^{u/s} \subset \Sigma_0,
\]
consisting of those covectors $\zeta\in\Sigma_0$ for which the backward/forward integral curve with initial condition $\zeta$ tends to $\Gamma_0$, are smooth conic codimension $1$ submanifolds.

We consider metric perturbations $g=g_0+\wt g$ of $g_0$ which are asymptotically (as $t\to\infty$) stationary: analogously to~\eqref{EqIPert}, we assume
\[
  \wt g \in \rho\CI_b,
\]
i.e.\ all components of $\wt g$ in $(t,x)$-coordinates are $\rho(t)=\la t\ra^{-\alpha}$ times functions in $\CI_b(M^\circ)$. The rescaled Hamilton vector field $\sfH_G$ of $G(\zeta)=|\zeta|_{g^{-1}}^2$ on the perturbed future part $\Sigma$ of the characteristic set $G^{-1}(0)\subset S^*M^\circ$ fits (after a coordinate change in $S^*M^\circ$ and an additional rescaling) into the framework of Theorem~\ref{ThmIMfd}, providing us with perturbed stable/unstable manifolds $\Gamma^{s/u}$ which are $\rho\CI_b$-perturbations of $\Gamma_0^{s/u}$.

We then study the propagation of weighted (in $t$) uniform (as $t\to\infty$) microlocal Sobolev regularity of solutions of wave equations on $(M,g)$. Working in $t\geq 1$, we say that $(x_0,\zeta_0)$, with $\zeta_0=\sigma_0\,d t+\xi_0\in\R\,d t\oplus T_{x_0}^*X$ non-zero, does not lie in $\WFcu^{s,r}(v)$ for a distribution $v\in t^{-r}H^{-N}(\R^4_{t,x})$ if there exists a cutoff $\chi=\chi(t)$, identically $1$ for sufficiently large $t$, and cutoffs $\phi\in\CIc(X)$ (non-zero at $x_0$) and $\psi\in\CI(\Sph^3)$ (non-zero at $\zeta_0/|\zeta_0|$, defined using the Euclidean norm on $\R^4\cong \R\,d t\oplus T_{x_0}^*X$) such that
\[
  \la\zeta\ra^s \psi(\zeta/|\zeta|)\cF(t^r\chi\phi v)(\zeta) \in L^2(\R^4_\zeta),\quad \zeta=(\sigma,\xi),
\]
where $\cF(f)(\sigma,\xi)=\int e^{-i(\sigma t+\xi x)}f(t,x)d t\,d x$ is the spacetime Fourier transform. Thus, $\WFcu^{s,r}(v)$ captures those positions and spacetime frequencies where weighted (by $\la\zeta\ra^s$) amplitudes of high frequency components (in conic directions in the momentum variable) of $t^r v$ fail to be square integrable in spacetime.

\begin{thm}
\label{ThmIKerr}
  Let $v\in t^{-r}H^{-N}(\R^4)$, $\Box_g v=f$. Suppose that $\WF^{s+1}(v)\cap\Gamma^s=\emptyset$ and $\WFcu^{s+1,r}(v)\cap(\bar\Gamma^s\setminus\Gamma)=\emptyset$; suppose further that $\WFcu^{s,r}(f)\cap\Gamma=\emptyset$. Then $\WFcu^{s,r}(v)\cap\Gamma=\emptyset$.
\end{thm}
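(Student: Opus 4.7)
My plan is to deduce Theorem~\ref{ThmIKerr} by applying the general microlocal propagation result of \S\ref{SsEC}---which, per the introduction, covers operators on spacetimes with a normally hyperbolic trapped set at future infinity---to $P=\Box_g$ on the partial compactification of $M^\circ=\R_t\times X$ obtained by adjoining the boundary $\{\tau=0\}$, $\tau=t^{-1}$. The entire task is therefore to verify that $\Box_g$ fits into that abstract framework. The background Kerr metric $g_0$ already does so: its rescaled future null-geodesic flow $\sfH_{G_0}$ on $\Sigma_0$ possesses a conic, normally hyperbolic trapped set $\Gamma_0$ with smooth stable/unstable manifolds $\Gamma_0^{s/u}$ of codimension one (Wunsch--Zworski, Dyatlov), and the non-degeneracy of the Poisson bracket of defining functions of $\Gamma_0^{s/u}$ is exactly the $r$-normal hyperbolicity recorded in \S\ref{SK}.

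The perturbed Hamilton field $\sfH_G$ differs from $\sfH_{G_0}$ by a vector field whose coefficients in $(t,x,\xi)$-coordinates on $S^*M^\circ$ lie in $\rho\CI_b$, so, after an elementary rescaling to remove the $\R_+$-action in the fibers, it falls precisely into the setting of Theorem~\ref{ThmIMfd}. That theorem then produces perturbed stable and unstable manifolds $\Gamma^{s/u}$, defined near $\Gamma$ at infinity, which are $\rho\CI_b$-close to $\Gamma_0^{s/u}$; in particular, defining functions $\phi_u,\phi_s$ of $\Gamma^u,\Gamma^s$ inside the perturbed characteristic set can be chosen in $\CI+\rho\CI_b$ and satisfy
\[
  \sfH_G\phi_u = c_u\phi_u + e_u,\qquad \sfH_G\phi_s = -c_s\phi_s + e_s,
\]
with $c_{u/s}>0$ on $\Gamma$ and errors $e_{u/s}$ vanishing quadratically in $(\phi_u,\phi_s)$ modulo $\rho\CI_b$. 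The non-degeneracy of $\{\phi_u,\phi_s\}$ on $\Gamma$ and the existence of an invariant homogeneous order function near $\Gamma$ both survive the perturbation because they are open conditions enforced only at the boundary $\tau=0$, where the perturbation vanishes. Once these hypotheses are in place, the microlocal estimate of \S\ref{SsEC} yields the quantitative cusp-weighted bound from which $\WFcu^{s,r}(v)\cap\Gamma=\emptyset$ follows by a standard regularization argument, with the $\WF^{s+1}(v)\cap\Gamma^s=\emptyset$ and $\WFcu^{s+1,r}(v)\cap(\bar\Gamma^s\setminus\Gamma)=\emptyset$ hypotheses providing the a priori control along the stable manifold at finite $t$ and at $\tau=0$, respectively.

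The principal obstacle is the limited regularity of $\Gamma^{s/u}$ up to the boundary: $\phi_{u/s}$ is merely $\rho\CI_b$-regular in $\tau$, so derivatives along the perturbed Hamilton flow generate errors of size $\rho$ that are small only in the averaged ($\tau\to 0$) sense. The positive commutator argument underlying \S\ref{SsEC} must be set up to absorb such errors, either into the positive main term---exploiting the sign of $\sfH_G\tau^{-2r}\sim 2r\tau^{-2r}$ on the forward component of the characteristic set, together with the positivity of $\sfH_G(\phi_u^2+\phi_s^2)$ off $\Gamma$---or, at the cost of a small loss $r\to r-\delta$ in the weight, into the trivial background term $\|v\|_{t^{-r}H^{-N}}$. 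A secondary, minor, difficulty is gluing the standard-calculus propagation along the finite-$t$ portion of $\Gamma^s$ (which feeds in the $\WF^{s+1}$ hypothesis) to the cusp-calculus propagation along $\bar\Gamma^s$ at the boundary, handled via a dyadic decomposition in $t$ and a corresponding partition of the commutant.
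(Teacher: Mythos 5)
Your overall strategy---verifying the hypotheses of the abstract spacetime microlocal estimate of \S\ref{SsEC} via the dynamical results of \S\ref{SM} applied to the null-geodesic flow of $g$, given the known normal hyperbolicity of exact Kerr---matches the structure of the paper's proof in \S\ref{SK}. However, there is a genuine gap in your formulation of the key input from the dynamical part, and it is exactly the subtlety the paper flags as ``crucial'' in the paragraph following the statement of Theorem~\ref{ThmIKerr}.

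You write $\sfH_G\phi_u = c_u\phi_u + e_u$ and $\sfH_G\phi_s = -c_s\phi_s + e_s$ with ``errors $e_{u/s}$ vanishing quadratically in $(\phi_u,\phi_s)$ modulo $\rho\CI_b$.'' This misstates what Theorem~\ref{ThmIMfd}/\ref{ThmMFlow} delivers, and the misstatement is fatal. Those theorems provide a $\Gamma^u$ (and a non-canonical choice of $\Gamma^s$) which is \emph{exactly} invariant under the perturbed flow in $\tau>0$; consequently, on the characteristic set $\Sigma$, the function $\sfH_G\phi^u$ vanishes identically on $\{\phi^u=0\}$, and Hadamard's lemma gives $\sfH_G\phi^u = -w^u\phi^u$ with \emph{no} additional error (cf.~\eqref{EqECPHamPhius}). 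The limited boundary regularity appears only in the coefficient $w^u\in\CI+\cA_\cuop^\alpha$, not as an additive term. This exactness is indispensable: the commutator $[P,\Phi^u]$ must, on $\Sigma$, factor as $-W^u\Phi^u$ plus $\Psicu^{m-2}$-order terms so that the equation~\eqref{EqECPPhiuv} for $\Phi^u v$ has the needed damping. If there were instead a decaying error not divisible by $\phi^u$, the forcing $f'$ in that equation would acquire a term in $\tau^\alpha\Psi_\infty^{m-1}(M)v$, whose $\Hcu^{s-m+2,r}$ norm requires controlling the $\Hcu^{s+1,r-\alpha}$ norm of $v$ near $\Gamma$---a \emph{loss of one derivative} relative to the $\Hcu^{s,r}$ regularity one seeks. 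The decay factor $\tau^\alpha$ does not compensate for this loss in the differential order.

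This is also why your proposed remedies miss the mark: absorbing into the positivity of $\sfH_G\tau^{-2r}$, or sacrificing weight $r\to r-\delta$, both trade against decay, whereas the problematic deficit would be in the Sobolev order. The genuine regularity issue---that $\phi^{u/s}$ and hence $\Phi^{u/s}$ have only $\cA_\cuop^\alpha$ boundary regularity, so one works in $\Psicu^0+\tau^\alpha\Psi_\infty^0$---is handled in Step~6 of the proof of Theorem~\ref{ThmECP} simply by noting that positivity of the commutator symbols is an open condition that persists after localizing near $\tau=0$, with no change in orders or weights. Finally, the ``dyadic decomposition and gluing'' in your last paragraph is not needed: the cusp calculus on the compactified $M$ already encompasses both finite $t$ and the face $\tau=0$, so a single commutant with a time cutoff $\chi_T(\tau)$ suffices, and in the backward direction relevant here $\chi_T'$ contributes a term of \emph{favorable} sign (see Step~7 of the proof of Theorem~\ref{ThmECP}).
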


The proof of this theorem depends crucially on the aforementioned breakthrough work \cite{WunschZworskiNormHypResolvent}, as it strongly uses the \emph{dynamical (normally hyperbolic) nature of the trapping}; in contrast, the special algebraic structure of the trapped set of Kerr, namely the complete integrability of the null-geodesic flow which allows for separation of variables, is \emph{irrelevant} (except insofar it is useful for actually proving the normal hyperbolicity), and in fact by itself seems to be \emph{insufficient} for proving this theorem, as exact control of the perturbed (and in general certainly not completely integrable) dynamics of $g$ is strongly used in the proof.

Theorem~\ref{ThmIKerr} is closely related to the estimates in \cite{WunschZworskiNormHypResolvent,DyatlovResonanceProjectors,DyatlovSpectralGaps} for semiclassical operators; we discuss this further below. There is an analogous statement for propagation from the unstable manifold into $\Gamma$, as well as extensions to principally scalar, non-self-adjoint (with suitable upper bound on the subprincipal symbol) operators between vector bundles. Thus, uniform microlocal regularity propagates from a punctured neighborhood of $\Gamma$ within $\Gamma^s$ into $\Gamma$, with derivative losses as in Theorem~\ref{ThmIEst}. This theorem can be phrased more naturally on a compactification of $M$ to a manifold with boundary
\begin{equation}
\label{EqICpt}
  [0,\infty)_\tau \times \cX,\quad \tau := t^{-1},
\end{equation}
in which case $\WFcu^{s,r}(v)$ is the (complete) cusp wave front set of $v$; we explain these notions in~\S\ref{SssECC}, following \cite{MazzeoMelroseFibred,VasyMinicourse}. The trapped set $\Gamma$ then lies over $\tau=0$, while the stable manifold is the closure $\Gamma^s\sqcup\bar\Gamma^s$ of $\Gamma^s$, where one views $\bar\Gamma^s\subset\{\tau=0\}$. As in the closed manifold setting, we prove Theorem~\ref{ThmIKerr} by means of a positive commutator argument (now employing the cusp pseudodifferential algebra) inspired by \cite{DyatlovSpectralGaps}; this provides quantitative microlocal bounds in weighted Sobolev spaces analogous to~\eqref{EqIEst}.

A key ingredient of the proof is that the microlocalization $\Phi^u v$ of $v$ (i.e.\ microlocalizing in a weak manner away from $\Gamma^u$), where $\Phi^u$ quantizes a defining function of $\Gamma^u$, satisfies a pseudodifferential equation which effectively has a damping term at $\Gamma$; for this step it is \emph{crucial} that $\Gamma^u$ be \emph{exactly} invariant by the $\sfH_G$-flow (rather than merely asymptotically so, as is the case for $\Gamma^u_0$); this is discussed after equation~\eqref{EqECPPhiuvForc}. Using a \emph{quantitative} version of real principal type propagation based on a careful construction of commutants and G\aa{}rding's inequality, this equation implies that the squared $t^{-r}H^s$ mass of $v$ is evenly spread out along $\bar\Gamma^u$. On the other hand, quantitative propagation for $\Box_g v=f$ and the unstable nature of the $\sfH_G$-flow on $\bar\Gamma^u$ imply that the squared mass on $\bar\Gamma^u$, but with distance from $\Gamma$ between $\delta$ and $1$, can be bounded by $\log(\delta^{-1})$ times the squared mass $\delta$-close to $\Gamma$, which is $\sim\delta$ (plus contributions from $u$ away from $\bar\Gamma^u$, and from $f$). Since $\delta\log(\delta^{-1})\ll 1$ for small $\delta>0$, this provides the desired bound of $v$ near $\Gamma$.

Previously, spacetime bounds at normally hyperbolic trapping were obtained by the author in joint work with Vasy~\cite{HintzVasyNormHyp}; these took place on \emph{exponentially} weighted (growing) function spaces, or on unweighted but mildly degenerate (at $\Gamma$) function spaces when studying symmetric operators. Here, we obtain estimates for non-symmetric operators on polynomially weighted (possibly \emph{decaying}), or even mildly exponentially decaying function spaces, though with additional loss of regularity; see Remarks~\ref{RmkECPNhyptr} and \ref{RmkKdS}. The more delicate estimates we prove here were not needed in the analysis of quasilinear waves on (and the nonlinear stability of) Kerr--\emph{de~Sitter} spacetimes \cite{HintzVasyQuasilinearKdS,HintzVasyKdSStability}, since the exponential decay of metric perturbations there implied that estimates on exponentially growing function spaces together with an exact analysis of the stationary (exact Kerr--de~Sitter) model were sufficient to prove exponential decay to a finite sum of resonant states (mode solutions); in particular, one could in principle have used separation of variable techniques at the trapped set in those works, akin to~\cite{DyatlovQNM,DyatlovAsymptoticDistribution}, though this would have distracted from the conceptual, namely \emph{dynamical}, reason for having (high energy) estimates at the trapped set.

Most prior results on microlocal estimates at normally hyperbolic trapping take place in the semiclassical setting, which is closely related, via the Fourier transform in time, to estimates for stationary (time-translation-invariant) problems. This is the context of \cite{WunschZworskiNormHypResolvent,DyatlovSpectralGaps,DyatlovZworskiTrapping} as well as the fine analysis of resonances associated with normally hyperbolically trapped sets by Dyatlov \cite{DyatlovResonanceProjectors}; see G\'erard--Sj\"ostrand \cite{GerardSjostrandHyperbolic} and Christianson \cite{ChristiansonNonconc} for the case of isolated hyperbolic orbits. Nonnenmacher--Zworski \cite{NonnenmacherZworskiDecay} study estimates at trapped sets when the stable and unstable normal bundles have low regularity. We also mention the work by Bony--Burq--Ramond \cite{BonyBurqRamondTrapping} who prove that a loss of semiclassical control (powers of $h^{-1}$), which heuristically corresponds to a loss of control in the Sobolev regularity sense in~\eqref{EqIEst},\footnote{We encourage the reader to compare \cite[Theorem~4.7]{HintzVasyQuasilinearKdS} with the estimate~\eqref{EqIEst}.} \emph{does} occur in the presence of trapping; see also Ralston \cite{RalstonLocalized}. We refer the reader to the excellent review articles \cite{ZworskiResonanceReview,WunschMild} as well as~\cite[\S6.3]{DyatlovZworskiBook} for further references in these directions.

The study of waves on Schwarzschild and Kerr spacetimes has a long history, starting with \cite{WaldSchwarzschild,KayWaldSchwarzschild}; Tataru~\cite{TataruDecayAsympFlat} and Dafermos--Rodnianski--Shlapentokh-Rothman \cite{DafermosRodnianskiShlapentokhRothmanDecay} proved sharp polynomial decay for scalar waves on all subextremal Kerr spacetimes, see also \cite{BlueSterbenzSemilinear,FinsterKamranSmollerYauKerr,TataruTohaneanuKerrLocalEnergy,DonningerSchlagSofferPrice,DonningerSchlagSofferSchwarzschild,TohaneanuKerrStrichartz,LukKerrNonlinear,AnderssonBlueHiddenKerr,AnderssonBlueMaxwellKerr} and the recent \cite{DafermosHolzegelRodnianskiSchwarzschildStability,DafermosHolzegelRodnianskiTeukolsky} for the linear stability of the Schwarzschild solution and related problems for slowly rotating Kerr. Aretakis \cite{AretakisExtremalKerr} obtained results on extremal Kerr spacetimes. Resonance expansions for scalar waves on Kerr--de~Sitter spacetimes were proved by Dyatlov \cite{DyatlovQNM,DyatlovQNMExtended,DyatlovAsymptoticDistribution} and Vasy~\cite{VasyMicroKerrdS}, following Bony--H\"afner \cite{BonyHaefnerDecay} and building on the work of S\'a Barreto--Zworski \cite{SaBarretoZworskiResonances}; see also \cite{MelroseSaBarretoVasySdS}. Many of these results rely on delicate separation of variables techniques at the trapped set; the work \cite{WunschZworskiNormHypResolvent} was the first to utilize the (stable under perturbations!) dynamical nature of the flow directly. For tensor-valued waves in the presence of trapping, see \cite{HintzPsdoInner,HintzVasyKdsFormResonances,HintzVasyKdSStability,HintzKNdSStability}.

The plan of the paper is as follows.
\begin{itemize}
\item In \S\ref{SM}, we prove the dynamical results on perturbations of dynamical systems with normally hyperbolic invariant sets, in particular proving Theorem~\ref{ThmIMfd}.
\item In \S\ref{SE}, we establish the microlocal propagation results, Theorem~\ref{ThmIEst} and the pseudodifferential generalization of Theorem~\ref{ThmIKerr}.
\item In \S\ref{SK}, we combine the first two parts, thus obtaining a description of trapping on asymptotically Kerr spacetimes, and the accompanying microlocal estimates of Theorem~\ref{ThmIKerr}.
\end{itemize}
We stress that \S\ref{SM} and \S\ref{SE} are independent and can be read in any order.

\subsection*{Acknowledgments}

This project grew out of an ongoing collaboration with Andr\'as Vasy, who sketched the proof of the estimate~\eqref{EqIEst}; I am very grateful to him for many useful discussions. I would also like to thank Semyon Dyatlov, Long Jin, and Maciej Zworski for helpful conversations and suggestions. This research was conducted during the time I served as a Clay Research Fellow, and I would like to thank the Clay Mathematics Institute for its support.

\section{Stable and unstable manifolds}
\label{SM}

In this first, dynamical, part of the paper, we closely follow the arguments and notation of \cite{HirschPughShubInvariantManifolds}.

\subsection{Function spaces and Lipschitz jets}

Denote by
\[
  0<\rho \in \CI(\R)
\]
a weight which is monotonically decreasing with $\lim_{t\to\infty}\rho(t)=0$, and so that
\begin{equation}
\label{EqMWeightCI}
  |\rho^{(k)}|\leq C_k\rho\quad \forall\,k\in\N_0.
\end{equation}
For example, one can take $\rho(t)=\la t\ra^{-\alpha}$ or $e^{-\alpha t}$ with $\alpha>0$ fixed. (These are natural choices for asymptotically Kerr, resp.\ Kerr--de~Sitter spaces.) Let $\cX$ denote a closed manifold\footnote{All our arguments below will be local near the closed positive codimensional submanifold $\Gamma$; the structure of $\cX$ away from $\Gamma$ will be irrelevant.} and put
\[
  \cM=\R_t\times\cX.
\]
Denoting by $\cC_b^0(\cM)$ the space of bounded continuous functions, we define
\begin{equation}
\label{EqMCBdd}
\begin{split}
  \cC_b^r(\cM) := \{ u\in \cC_b^0(\cM) \colon& V_1\ldots V_k u\in \cC_b^0(\cM)\ \ \forall\,1\leq k\leq r, \\
    &\qquad\qquad V_i\in\cV(\cX)\cup\{\pa_t\},\,i=1,\ldots,k \},
\end{split}
\end{equation}
and $\CI_b(\cM):=\bigcap_{r\in\N}\cC_b^r(\cM)$. The weighted analogues are
\[
  \rho\cC_b^r(\cM) := \{ u\colon\cM\to\R \colon u/\rho \in \cC_b^r(\cM) \},\quad r\in\N_0\cup\{\infty\}.
\]
Note that by~\eqref{EqMWeightCI}, differentiation along $\pa_t$ or $V\in\cV(\cX)$ gives continuous maps $\rho\cC_b^r\to\rho\cC_b^{r-1}$ for all $r\geq 1$. For open sets $\cU\subset\cM$, we denote the space of restrictions to $\cU$ by
\[
  \rho\cC_b^r(\cU)=\{u|_\cU\colon u\in\rho\cC_b^r(\cM)\}.
\]
If $E\to\cX$ is a vector bundle and
\begin{equation}
\label{EqMProjX}
  \pi_\cX\colon\cM\to\cX
\end{equation}
denotes the projection, we similarly define $\cC^\infty(\cM;\pi_\cX^*E)$ and $\rho\cC^\infty(\cM;\pi_\cX^*E)$ using local trivializations of $E$ or, equivalently, using a connection to differentiate sections of $E$ along vector fields on $\cX$.

We recall from \cite[\S3]{HirschPughShubInvariantManifolds} the useful notion of \emph{Lipschitz jets}:
\begin{definition}
\label{DefMLipJet}
  Let $(X,d_X)$ and $(Y,d_Y)$ denote two metric spaces, and fix a point $x\in X$.
  \begin{enumerate}
  \item We say that the continuous maps $g_1,g_2\colon X\to Y$, defined near $x$, are \emph{tangent at $x$} iff $g_1(x)=g_2(x)$ and
    \[
      d_x(g_1,g_2) := \limsup_{x'\to x} \frac{d_Y(g_1(x'),g_2(x'))}{d_X(x,x')} = 0.
    \]
  \item The \emph{Lipschitz jet} of a continuous function $g\colon X\to Y$ at $x$, denoted $J_x g$, is the equivalence class of $g$ modulo tangency at $x$, i.e.\ the set of all continuous maps defined near $x$ which are tangent to $g$ at $x$.
  \item The set of Lipschitz jets of maps carrying $x$ into $y$ is denoted
    \[
      J(X,x;Y,y) = \{ J_x g \colon g\ \text{is a continuous map}\ X\to Y,\ g(x)=y \}.
    \]
  \item For $j_1,j_2\in J(X,x;Y,y)$, we define $d(j_1,j_2):=d_x(g_1,g_2)$, where $g_i$ is a representative of $j_i$; this is independent of the choices of representatives.
  \item Denoting by $y$ the constant map $X\to Y$, $x'\mapsto y$, we define by
    \[
      L_x g := d(J_x g,J_x y) \in [0,\infty]
    \]
    the \emph{Lipschitz constant of $g$ at $x$}.
  \item We define the space of bounded Lipschitz jets by
    \[
      J^b(X,x;Y,y) := \{ j\in J(X,x;Y,y) \colon d(j,J_x y)<\infty \}.
    \]
    If $X,Y$ are differentiable manifolds, we set
    \[
      J^d(X,x;Y,y) := \{ j\in J^b(X,x;Y,y) \colon j\ \text{has a differentiable representative} \}.
    \]
  \end{enumerate}
\end{definition}

If $X,Y$ are Banach spaces, we recall from \cite[Theorem~(3.3)]{HirschPughShubInvariantManifolds} that $J^b(X,0;Y,0)$, equipped with the norm $|j|:=d(j,0)$, is a Banach space, and $J^d(X,0;Y,0)$ is a closed subspace.

We record that if $a\in\rho\cC^1_b(\R\times\R^n)$, then there exists $C>0$ such that
\begin{equation}
\label{EqMLipEst}
\begin{gathered}
  |a(t,x)-a(t',x')| \leq C\bigl(\rho_+(t,t')|t-t'|+\rho_-(t,t')|x-x'|\bigr), \\
   \rho_+(t,t'):=\max\{\rho(t),\rho(t')\},\quad
   \rho_-(t,t'):=\min\{\rho(t),\rho(t')\};
\end{gathered}
\end{equation}
in fact, we can take $C=\sup_{(t,x)}(L_{(t,x)}(a)/\rho(t))$.

\subsection{An invariant section theorem}
\label{SsInv}

As an illustration of the relevant techniques, and as a technical tool for later, we prove existence and higher regularity of invariant sections for fiber contractions; this is a version of \cite[Theorem~3.5]{HirschPughShubInvariantManifolds}.

\begin{thm}
\label{ThmMInv}
  Let $r\in\N$. Let $\cX$ be a closed Riemannian manifold, and let $\cX_1$ be an open subset. Let $\bar f\colon\ol{\cX_1}\to\cX$ be a smooth map which is a diffeomorphism onto its image, and suppose $\bar f$ overflows $\cX_1$: $\bar f(\cX_1)\supset\ol{\cX_1}$. Let moreover $\bar\cE\to\cX$ be a vector bundle equipped with a fiber metric, and let $\bar\ell\colon\bar\cE|_{\cX_1}\to\bar\cE$ be a fiber bundle map\footnote{That is, we do not require it to be fiber-linear.} covering $\bar f$. Let
  \[
    \alpha_x := \| D_{\bar f(x)}\bar f^{-1} \|, \quad
    k_x := \sup_{e\in\bar\cE_x}\|D_e(\ell|_{\bar\cE_x})\|,\qquad x\in\cX_1,
  \]
  denote the base contraction and fiber expansion, respectively, and suppose that
  \begin{equation}
  \label{EqMInvContr}
    \sup_{x\in\cX_1} k_x,\ \sup_{x\in\cX_1} k_x\alpha_x^r<1.
  \end{equation}
  Let $\bar\sigma\colon\cX_1\to\bar\cE|_{\cX_1}$ denote the unique $\bar\ell$-invariant section, i.e.\ $\bar\ell(\bar\sigma(\cX_1))\cap\bar\cE|_{\cX_1}=\bar\sigma(\cX_1)$, which is of class $\cC^r$.\footnote{This is the content of the first part of \cite[Theorem~3.5]{HirschPughShubInvariantManifolds}.}

  Let $\cM=\R_t\times\cX$ and $\cM_1=\R_t\times\cX_1$, and put $f_0(t,x)=(t-1,\bar f(x))$ for $(t,x)\in\cM_1$. With $\cE_0:=\pi_\cX^*\bar\cE$ denoting the pullback bundle, $\pi_\cX\colon\cM\to\cX$ being the projection, extend $\bar\ell$ to the map $\ell_0\colon(t,x,e)\mapsto(t-1,\bar\ell(x,e))$ which covers $f_0$. The section $\sigma_0(t,x)=(t,\bar\sigma(x))$ is a stationary and invariant section for $\ell_0$.

  Let next $f\colon\cM_1\to\cM$ and $\ell\colon\cE_0|_{\cM_1}\to\cE_0$ be $\rho\cC_b^r$-perturbations of $f_0$ and $\ell_0$, defined for $t>t_0$ with $t_0\in\R$ fixed, with $\ell$ covering $f$. That is, fix a finite cover of $\ol\cX_1$ by coordinate systems, and fix trivializations of $\bar\cE$ over these; write $\ell(t,x,e)=(f(t,x),\breve\ell(t,x,e))$ in local coordinates, with $\breve\ell(t,x,e)$ valued in the fibers of $\cE_0$, likewise for $\ell_0$; then
  \begin{alignat*}{3}
    f(t,x) - f_0(t,x) &= \wt f(t,x),&\quad& \wt f(t,x) \in \rho\cC_b^r, \\
    \breve\ell(t,x,e)-\breve\ell_0(t,x,e) &= \wt\ell(t,x,e), &\quad& \wt\ell(t,x,e) \in \rho\cC_b^r.
  \end{alignat*}
  Furthermore, assume $\pi_T\wt f(t,x)=0$ where $\pi_T\colon\R\times\cX\to\R$ projects onto the first factor.

  Then there exists an $\ell$-invariant section $\sigma_\ell\colon\cM_1\to\cE_0$, defined for $t>t_0$, that is, $\ell(\sigma_\ell(\cM_1\cap\{t>t_0+1\}))\cap\cE_0|_{\cM_1}=\sigma_\ell(\cM_1\cap\{t>t_0\})$, such that
  \[
    \sigma_\ell-\sigma_0\in\rho\cC_b^r(\cM_1;\cE_0).
  \]
  This section is unique among sections $\sigma_\ell$ with $\sigma_\ell-\sigma_0\in\rho\cC_b^0$.
\end{thm}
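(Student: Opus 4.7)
The plan is to find $\sigma_\ell$ as the unique fixed point of a graph transform operator $\Phi$ acting on sections of $\cE_0$ over $\cM_1 \cap \{t > t_0\}$, then bootstrap from $\rho\cC_b^0$ to $\rho\cC_b^r$ regularity. First I would fix $t_0$ large enough that $\rho(t_0)$ is sufficiently small: since $\bar f$ overflows $\cX_1$ and $\wt f \in \rho\cC_b^r$, for $t > t_0$ the implicit function theorem produces an inverse $f^{-1}\colon \cM_1 \cap \{t>t_0\} \to \cM_1 \cap \{t>t_0+1\}$ as a $\rho\cC_b^r$-perturbation of $f_0^{-1}$. Crucially, because $\pi_T\wt f \equiv 0$, the time coordinate of $f^{-1}(p)$ is exactly $t(p)+1$, so all weight-ratio estimates will be clean. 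The graph transform is then defined by
\[
  (\Phi\sigma)(p) := \ell(\sigma(f^{-1}(p))), \qquad p \in \cM_1 \cap \{t>t_0\},
\]
and $\sigma$ is $\ell$-invariant (in the sense of the theorem) iff $\Phi\sigma = \sigma$.

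Next I would apply Banach's fixed point theorem on $\cF := \{\sigma_0 + \eta : \eta \in \rho\cC_b^0\}$ with the $\rho\cC_b^0$-metric. For $\sigma_1,\sigma_2 \in \cF$ and $p' := f^{-1}(p)$, the fiberwise contraction of $\bar\ell$ (with rate $k<1$) combined with the monotonicity of $\rho$ yields
\[
  \rho(t(p))^{-1}\bigl|(\Phi\sigma_1 - \Phi\sigma_2)(p)\bigr| \leq k\,\frac{\rho(t(p)+1)}{\rho(t(p))}\,\|\sigma_1-\sigma_2\|_{\rho\cC_b^0} \leq k\,\|\sigma_1-\sigma_2\|_{\rho\cC_b^0},
\]
so $\Phi$ is a strict contraction on $\cF$. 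Self-mapping follows from the telescoping decomposition
\[
  (\Phi\sigma - \sigma_0)(p) = \bigl[\ell-\ell_0\bigr](\sigma(p')) + \bigl[\ell_0(\sigma(p'))-\ell_0(\sigma_0(p'))\bigr] + \bigl[\ell_0(\sigma_0(p'))-\ell_0(\sigma_0(f_0^{-1}(p)))\bigr],
\]
together with the $\ell_0$-invariance of $\sigma_0$: the first bracket is $O(\rho)$ since $\wt\ell \in \rho\cC_b^r$; the second is controlled by $k|\eta(p')|$ by fiber contraction; and the third is $O(\rho)$ because $f^{-1}-f_0^{-1} = O(\rho)$ in the spatial components (and vanishes in time). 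Banach's theorem then produces a unique fixed point $\sigma_\ell \in \cF$, which gives the $\rho\cC_b^0$ statement and the uniqueness claim.

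To upgrade the regularity to $\rho\cC_b^r$, I would invoke the fiber contraction theorem used in the proof of \cite[Theorem~3.5]{HirschPughShubInvariantManifolds}, applied inductively to the formal derivatives. At the $j$-th stage ($1 \leq j \leq r$), one considers an enhanced graph transform acting on tuples $(\sigma,\tau_1,\dots,\tau_j)$ where $\tau_k$ plays the role of $D^k\sigma$; differentiating the fixed-point equation via the chain rule, the action on the highest component $\tau_j$ is affine, with linear part obtained by composing $D_e\ell$ with $j$ factors of $Df^{-1}$ and hence with fiber contraction rate at most $k\alpha^j$. The hypothesis $k\alpha^r<1$ makes this a strict fiber contraction throughout $j \leq r$, so the fiber contraction theorem successively yields $\tau_j \in \rho\cC_b^0$, and a standard argument identifies $\tau_j$ with $D^j\sigma_\ell$. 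The main obstacle will be the weight bookkeeping: differentiating objects in $\rho\cC_b^r$ along $\pa_t$ produces derivatives of $\rho$, which must be controlled via $|\rho^{(k)}| \leq C_k\rho$; verifying that all the nonlinear Fa\`a di Bruno terms arising from the chain-rule expansion contribute only to the $\rho\cC_b^0$-inhomogeneity (rather than swamping the contraction or forcing loss of the weight) is the most delicate piece, though the uniform smallness of $\wt f,\wt\ell$ on $\{t>t_0\}$ keeps everything quantitative.
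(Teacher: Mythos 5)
Your proposal follows the same graph-transform strategy as the paper for the existence and uniqueness statement in $\rho\cC_b^0$: construct $f^{-1}$ as a perturbation of $f_0^{-1}$, observe that $\pi_T\wt f\equiv 0$ forces $f^{-1}$ to shift time by exactly $+1$ (so the monotone weight gives $\rho(t+1)/\rho(t)\le 1$), and then apply the contraction mapping principle to $\ell_\sharp\sigma=\ell\sigma f^{-1}$ on a complete ball of weighted sections. Your telescoping decomposition of $\ell_\sharp\sigma-\sigma_0$ and the contraction estimate are essentially identical in content to the paper's Step~2.

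For the higher-regularity bootstrap, however, you take a genuinely different route. You propose the classical ``formal derivatives plus fiber contraction theorem'' scheme: iterate the graph transform starting from a $\cC^r$ section, track the tuples $(\sigma,\tau_1,\dots,\tau_j)$ of candidate derivatives under an enhanced transform, observe that the linear action on $\tau_j$ has rate $k\alpha^j<1$, and then invoke a standard convergence-of-derivatives lemma to identify $\tau_j=D^j\sigma_\ell$. The paper instead proceeds in three distinct sub-steps: it first shows $\sigma_\ell$ is Lipschitz (Step~3), then upgrades to pointwise differentiability using the bundle of Lipschitz jets $J^b\supset J^d$ and the fact that the induced contraction preserves the closed subspace of differentiable jets (Step~4), then passes to the bundle of linear maps $L$ to obtain continuity of the derivative (Step~5), and finally inducts by observing that the linearized map $L\ell$ \emph{itself} satisfies the hypotheses of the theorem with $r$ replaced by $r-1$ and fiber-contraction rate $k\alpha$ (Step~6). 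The advantage of the paper's route is structural: the Lipschitz-jet machinery makes the passage from ``candidate derivative'' to ``actual derivative'' automatic, and the recursion in Step~6 avoids explicit Fa\`a di Bruno bookkeeping by re-using the theorem itself. Your approach is more hands-on and places the burden on exactly the step you flag as delicate: verifying that all Fa\`a-di-Bruno lower-order terms remain in the $\rho\cC_b^0$ inhomogeneity (with the $|\rho^{(k)}|\le C_k\rho$ hypothesis absorbing $\pa_t$ hits on the weight), and that the fiber-contraction theorem really does produce a $\rho$-decaying invariant family $(\tau_1,\dots,\tau_r)$ rather than merely a bounded one; for the latter you will need to restrict the fibers of your tuple bundle to balls of radius $\sim\rho(t)$, precisely as the paper does for its bundle $\cB$. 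Both approaches work; your outline is correct in strategy but leaves the weight-tracking in the derivative bootstrap as a claim rather than a proof, whereas the paper carries this out explicitly via the bounded, $\rho$-shrinking subbundles $\cD^b$, $\cD^d$, and $\cB$.
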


For a linear transformation $A$ between two normed vector spaces, we put
\[
  m(A) := \inf_{\|x\|=1} \|A x\|.
\]

\begin{proof}[Proof of Theorem~\usref{ThmMInv}]
  We shall follow \cite[\S3]{HirschPughShubInvariantManifolds} closely; the strategy is to show that the perturbation $\wt f$ does not destroy the contraction properties of the stationary model $f_0$. By replacing $\bar\cE$ with a vector bundle $\bar\cE\oplus\bar\cE'$ for suitable $\bar\cE'\to\cX$, we may assume that $\bar\cE$ is trivial with typical fiber denoted $E$; we extend $\bar\ell$ by mapping $(x,e\oplus e')\mapsto\bar\ell(x,e)\oplus 0$, similarly for $\ell_0$, $\ell$; note that an invariant section is necessarily valued in $\cE\oplus 0$. It suffices to construct $\sigma$ as a section over
  \[
    \cM_1(\eps):=\{(t,x)\in\cM_1\colon t>\eps^{-1}\}
  \]
  for any small $\eps>0$, as $\sigma$ in $t>t_0$ can be reconstructed from this by repeated application of $\ell$. At first, we will seek the invariant section in the space
  \[
    \Sigma^0(\eps) := \{ \text{sections}\ \sigma\colon\cM_1(\eps)\to\cE_0 \colon |\sigma(t,x)-\sigma_0(t,x)|\leq C_\Sigma\rho(t) \},
  \]
  where $C_\Sigma$ will be specified later, see~\eqref{EqMInvSectBd}. We then wish to consider the map $\ell_\sharp$ on $\Sigma^0(\eps)$, defined by
  \[
    \ell_\sharp\sigma(t,x) = \ell\sigma f^{-1}(t,x),\quad (t,x)\in\cM_1(\eps);
  \]
  this is a `graph transform' of $\sigma$: the graph of $\ell_\sharp\sigma$ is the image of the graph of $\sigma$ under $\ell$. We first need to check that $f^{-1}$ is well-defined and maps $\cM_1(\eps)$ into itself.

  \textit{\underline{Step 1:} control of $f^{-1}$.} First, we show (using $\wt f\in\rho\cC_b^1$) that $f|_{\cM_1(\eps)}$ is injective for small $\eps>0$. Indeed, $f(t,x)=f(t',x')$ requires $t'=t$; but then, in geodesic coordinates centered at $x\in\cX_1$ and $\bar f(x)$, working in a geodesic ball around $x$ with small radius $\eps_0$ (which we will take small, independently of $\eps$), and letting $\mu_x:=m(D_x\bar f)=\alpha_{\bar f^{-1}(x)}^{-1}>0$,
  \begin{align*}
    |f(t,x)-f(t,x')| &\geq |f_0(t,x)-f_0(t,x')| + |\wt f(t,x)-\wt f(t,x')| \\
      &\geq \bigl(\mu_x-o(1) - \wt C\rho(t)\bigr)|x-x'|, \qquad \eps_0\to 0,
  \end{align*}
  where $\wt C$ is the Lipschitz constant of $\rho^{-1}\wt f$. This implies injectivity of $f$ restricted to small geodesic balls in $\cX$ provided $t$ is sufficiently large; the injectivity of $\bar f$ on the compact set $\ol{\cX_1}$ then implies the injectivity of $f$ on $\cM_1(\eps)$ for $\eps>0$ small.
  
  Given $t$ large and $y\in\ol{\cX_1}$, we can then solve $f(t+1,x)=(t,y)$ with $x\in\cX_1$. Indeed, working in local coordinates, the map
  \begin{equation}
  \label{EqMInvFInv}
    x\mapsto \bar f^{-1}\bigl(y-\pi_\cX\wt f(t+1,x)\bigr),
  \end{equation}
  for $x$ in an $\eps_0$-neighborhood of $\bar f^{-1}(y)$, is well-defined for large $t$ (thus small $\wt f$) by the overflow assumption on $\bar f$, and is a contraction, again since $\wt f$ and its Lipschitz constant decay as $t\to\infty$. From this construction, we infer that the inverse map
  \[
    g\colon\cM_1(\eps)\to\cM_1(\eps),\quad
    g(t,y)=f^{-1}(t,y)=(t+1,x),
  \]
  satisfies Lipschitz bounds
  \begin{equation}
  \label{EqMInvLip}
  \begin{split}
    |\pi_\cX g(t,y)-\pi_\cX g(t,y')| &\leq (\alpha_{\bar f^{-1}(y)}+o(1)+\wt C\rho(t))|y-y'| \\
      &\leq (\alpha_{\bar f^{-1}(y)}+o(1))|y-y'|, \qquad \eps\to 0,
  \end{split}
  \end{equation}
  when $|y-y'|<\eps$ and $t>\eps^{-1}$. Writing $g_0(t,y)=f_0^{-1}(t,y)=(t+1,\bar f^{-1}(y))$, we also note that
  \begin{equation}
  \label{EqMInvBddInv}
    |g(t,y)-g_0(t,y)|\leq\wt C\rho(t)
  \end{equation}
  using the description of $\pi_\cX g(t,y)$ as the fixed point of~\eqref{EqMInvFInv}.

  \textit{\underline{Step 2:} existence and uniqueness in $\rho\cC_b^0$.} This uses the $r=1$ assumptions. Denote by $\pi_E\colon\cE_0\to E$ denote the projection onto the fiber, and write sections of $\cE_0$ as
  \[
    \sigma(t,x)=(t,x,\breve\sigma(t,x)),\quad \breve\sigma=\pi_E\sigma.
  \]
  Equip $\Sigma^0(\eps)$ with the (complete) $\cC^0$ metric. For $x=\pi_\cX g(t,y)$ and $x_0=\pi_\cX g_0(t,y)$, and using~\eqref{EqMInvBddInv}, we thus have
  \begin{align*}
    &|\ell_\sharp\sigma(t,y)-\sigma_0(t,y)| \\
    &\quad= |\ell_\sharp\sigma(t,y)-((\ell_0)_\sharp\sigma_0)(t,y)| \\
    &\quad =|\ell(g(t,y),\breve\sigma(g(t,y)))-\ell_0(g_0(t,y),\breve\sigma_0(g_0(t,y)))| \\
    &\quad\leq |(\breve\ell-\breve\ell_0)(g(t,y),\breve\sigma(g(t,y)))| + |\breve\ell_0(g(t,y),\breve\sigma(g(t,y)))-\breve\ell_0(g(t,y),\breve\sigma_0(g(t,y)))| \\
    &\qquad\qquad + |\breve\ell_0(g(t,y),\breve\sigma_0(g(t,y)))-\breve\ell_0(g_0(t,y),\breve\sigma_0(g(t,y)))| \\
    &\qquad\qquad + |\breve\ell_0(g_0(t,y),\breve\sigma_0(g(t,y)))-\breve\ell_0(g_0(t,y),\breve\sigma_0(g_0(t,y)))| \\
    &\quad\leq \wt C_1\rho(t) + k_x|\breve\sigma(g(t,y))-\breve\sigma_0(g(t,y))| + C_0\wt C\rho(t) + k_{x_0} C_0\wt C\rho(t) \\
    &\quad\leq C_\Sigma\rho(t),
  \end{align*}
  where $\wt C_1$, $\wt C$, resp.\ $C_0$, are upper bounds for $\wt\ell$, the Lipschitz constant of $\wt f$ (via~\eqref{EqMInvBddInv}), resp.\ $\ell_0$, $\breve\sigma_0$; the final inequality requires $\wt C_1+k_x C_\Sigma+C_0\wt C(1+k_{x_0})\leq C_\Sigma$, which can be arranged to hold for all $x$ by fixing
  \begin{equation}
  \label{EqMInvSectBd}
    C_\Sigma > \tfrac{\wt C_1+C_0\wt C(1+K)}{1-K},\quad K:=\sup_{x\in\cX_1}k_x.
  \end{equation}
  Thus $\ell_\sharp\sigma\in\Sigma^0(\eps)$. We further estimate for $\sigma,\sigma'\in\Sigma^0(\eps)$:
  \[
    |\ell_\sharp\sigma(t,y)-\ell_\sharp\sigma'(t,y)| \leq (k_y+o(1))|\sigma(t+1,y)-\sigma'(t+1,y)|, \qquad \eps\to 0.
  \]
  The contraction mapping principle implies the existence of a unique invariant section of $\ell_\sharp$,
  \[
    \sigma_\ell\in\Sigma^0(\eps) \subset \sigma_0+\rho\cC_b^0.
  \]

  \textit{\underline{Step 3:} Lipschitz regularity $(r=1)$.} Using the assumptions for $r=1$, we next prove that $\sigma_\ell$ is Lipschitz; this uses the estimate~\eqref{EqMInvLip}. Consider the space of sections
  \begin{equation}
  \label{EqMInvSectSpace}
    \Sigma(\eps) := \bigl\{ \sigma\in\Sigma^0(\eps) \colon L_{(t,x)}(\breve\sigma-\breve\sigma_0) \leq C_\Sigma\rho(t) \bigr\}.
  \end{equation}
  Here, the Lipschitz constant is defined using the triviality of $\bar\cE$ by
  \begin{equation}
  \label{EqMInvLipDef}
    L_{(t,x)}(\breve\sigma) := \limsup_{(s,y)\to(t,x)} \frac{|\breve\sigma(s,y)-\breve\sigma(t,x)|_{\bar\cE_x}}{|s-t|+d(y,x)},
  \end{equation}
  where $d$ is the Riemannian distance function on $\cX$. We contend that $\ell_\sharp(\sigma)\in\Sigma(\eps)$ for $\sigma\in\Sigma(\eps)$. Indeed, for $(t+1,x)=g(t,y)$, we have the bound
  \begin{align*}
    L_{(t,y)}(\pi_E(\ell_\sharp\sigma-\sigma_0)) & \leq L_{(t,y)}(\pi_E(\ell\sigma-\ell_0\sigma)g) + L_{(t,y)}(\pi_E(\ell_0\sigma-\ell_0\sigma_0)g) \\
      &\leq \bigl(L_{(t,y)}(\wt\ell)L_{(t+1,x)}(\breve\sigma) + L_{(t,y)}(\pi_E\ell_0)L_{(t+1,x)}(\breve\sigma-\breve\sigma_0)\bigr) L_{(t,x)}(g) \\
      &\leq \bigl(\wt C\rho(t)(C_0+C_\Sigma\rho(t)) + k_{\bar f^{-1}(y)} C_\Sigma\rho(t+1)\bigr)(\alpha_{\bar f^{-1}(y)}+o(1)) \\
      &\leq (k_x\alpha_x+o(1))C_\Sigma\rho(t), \qquad \eps\to 0
  \end{align*}
  where in the last step we used $|\bar f^{-1}(y)-x|\leq\wt C\rho(t)$ from~\eqref{EqMInvBddInv}, and also that $\wt C C_0+k_x C_\Sigma<C_\Sigma$ by our choice of $\Sigma$. In view of assumption~\eqref{EqMInvContr}, this proves our contention. Applying the contraction mapping principle produces a unique $\ell_\sharp$-invariant section in $\Sigma(\eps)$, which must be equal to that constructed in the previous step. Thus,
  \[
    \sigma_\ell\in\Sigma(\eps).
  \]
  
  \textit{\underline{Step 4:} pointwise differentiability $(r=1)$.} As in \cite[\S3]{HirschPughShubInvariantManifolds}, consider the bundle $J^b\to f(\cM_1(\eps))=:\cM_2(\eps)$, where the base is equipped with the discrete topology;\footnote{This circumvents difficulties in giving $J^b$ the structure of a Banach bundle over $\cM_2(\eps)$ with its usual topology; see also the proof of \cite[Theorem~(3.5)]{HirschPughShubInvariantManifolds}.} the fibers are
  \[
    J^b_{(t,x)} = \{ J_{(t,x)}\sigma \in J^b(\cM_2(\eps),(t,x);\cE_0,\sigma_\ell(t,x)) \colon \sigma\in\Sigma(\eps) \},
  \]
  where we define $\sigma_\ell=\sigma$ on $\cM_1(\eps)$ and $\sigma_\ell(t,x)=\ell(\sigma(g(t,x)))$ for $(t,x)\in\cM_2(\eps)\setminus\cM_1(\eps)$, and membership in $\Sigma(\eps)$ near such $(t,x)$ is defined by the bounds in~\eqref{EqMInvSectSpace}. The bundle map $\ell$ induces a bundle map $J\ell$ on $J^b|_{\cM_1(\eps)}$, covering $f$, by
  \[
    J\ell(J_{(t,x)}\sigma) = J_{f(t,x)}(\ell_\sharp\sigma).
  \]
  This is well-defined by the estimates from the previous step. It is also a fiber contraction:
  \begin{align*}
    |J\ell(J_{(t,x)}\sigma)-J\ell(J_{(t,x)}\sigma')| &= L_{f(t,x)}(\pi_E\ell\sigma g-\pi_E\ell\sigma' g) \\
      &\leq L_{\sigma_\ell(t,x)}(\pi_E\ell)L_{(t,x)}(\sigma-\sigma')L_{f(t,x)}(g) \\
      &\leq (k_x+\wt C\rho(t)) (\alpha_{\bar f^{-1}\pi_\cX f(t,x)}+o(1)) L_{(t,x)}(\sigma-\sigma') \\
      &\leq (k_x\alpha_x+o(1))L_{(t,x)}(\sigma-\sigma'),\qquad \eps\to 0,
  \end{align*}
  by~\eqref{EqMInvLip}. The contraction mapping principle produces a unique bounded $J\ell$-invariant section $\sigma_{J\ell}$ of $J$. On the other hand, $J\sigma_\ell$ is a bounded section of $J^b$ which by construction is $J\ell$-invariant, hence
  \[
    J\sigma_\ell=\sigma_{J\ell}.
  \]
  Now, $J\ell$ preserves the closed subbundle $J^d\to\cM_2(\eps)$ of Lipschitz jets of \emph{differentiable} sections, as $g$ is differentiable whenever $f$ is. Thus, $\sigma_{J\ell}$ is necessarily a section of $J^d$, proving the pointwise differentiability of $\sigma_\ell$.

  \textit{\underline{Step 5:} $\rho\cC_b^1$-regularity.} Consider the bundle $L\to\cM_2(\eps)$, where the base carries its standard topology again, and the fibers of $L$ are spaces of linear maps:
  \[
    L_{(t,x)} = \cL(T_{(t,x)}\cM, (\cE_0)_{(t,x)}).
  \]
  We shall work in the subbundle
  \[
    \cB_{(t,x)} = \{ P\in L_{(t,x)} \colon \|P-D_{(t,x)}\sigma_0\|\leq C_\Sigma\rho(t) \},
  \]
  on which we have a bundle map $L\ell$ which acts on $P\in L_{(t,x)}$ by
  \[
    {\rm graph}(L\ell(P)) = (D_{\sigma(t,x)}\ell)({\rm graph}(P))
  \]
  where on the right, we write, using the triviality of $\cE_0$,
  \[
    {\rm graph}(P) := \{ \xi + P\xi \colon \xi\in T_{(t,x)}\cM \} \subset T_{(t,x,\breve\sigma(t,x))}\cE_0,
  \]
  similarly on the left. The previous estimates show that $L\ell$, which covers the base map $f$, contracts the fibers by $k_x\alpha_x+o(1)$, hence, using the contraction mapping principle as before, there exists a unique bounded invariant section of $L\ell$, denoted
  \[
    \sigma_{L\ell} \colon \cM_1 \to \cB.
  \]
  Since $\ell$ is $\cC^1$, so $L\ell$ is $\cC^0$, the section $\sigma_{L\ell}$ is necessarily equal to the unique \emph{continuous} invariant section. The (a priori discontinuous) section $D\sigma_\ell$ of $\cB$ must be equal to $\sigma_{L\ell}$ and is therefore continuous, which gives 
  \[
    \sigma_\ell-\sigma_0 \in \rho\cC_b^1
  \]
  under the $r=1$ assumptions.

  \textit{\underline{Step 6:} $\rho\cC_b^r$-regularity.} We argue inductively. Thus, assume $\sigma_\ell-\sigma_0\in\rho\cC_b^{r-1}$, $r\geq 2$. Consider again the map $L\ell$ from the previous step: under the $\rho\cC_b^r$-assumptions of the theorem, $L\ell$ is a $\rho\cC_b^{r-1}$-perturbation of its stationary analogue $L\ell_0$, and it contracts fibers by $k_x\alpha_x+o(1)$. Therefore, it satisfies the $\rho\cC_b^{r-1}$-assumptions of the theorem, which implies that $\sigma_{L\ell}-\sigma_{L\ell_0}\in\rho\cC_b^{r-1}$. Therefore, $\sigma_\ell-\sigma_0\in\rho\cC_b^r$, finishing the inductive step.
\end{proof}

\subsection{Stable/unstable manifold theorem for maps}

We now turn to the main dynamical result of this paper. Let $\cX$ denote a closed $n$-dimensional ($n\geq 1$) manifold. We make the following assumptions:
\begin{enumerate}[label=(I.\arabic*),ref=I.\arabic*]
\item\label{ItMGamma} $\Gamma$ is a closed $\CI$ submanifold of $\cX$;
\item\label{ItMMap} $\bar f\colon\cX\to\cX$ is a $\CI$ map so that $\bar f(\Gamma)=\Gamma$, and there exists an open neighborhood $\cU$ of $\Gamma$ so that $\bar f\colon\cU\to\bar f(\cU)$ is a diffeomorphism;
\item\label{ItMSplit} there is a $\CI$ bundle splitting
  \begin{equation}
  \label{EqMSplit}
    T_\Gamma\cX = T\Gamma \oplus \bar N^u \oplus \bar N^s
  \end{equation}
  which is preserved by the linearization $D\bar f$ at $\Gamma$. Denote
  \[
    \Gamma_p\bar f:=D_p\bar f|_{T_p\Gamma},\quad
    \bar N^{u/s}_p\bar f:=D_p\bar f|_{\bar N_p^{u/s}};
  \]
\item\label{ItMNormHyp} for all $r\in\N$, the map $\bar f$ is \emph{immediately relatively $r$-normally hyperbolic} at $\Gamma$ in the sense of \cite[Definition~2]{HirschPughShubInvariantManifolds}. That is, for all $r$, there exist fiber metrics on the summands in~\eqref{EqMSplit} with respect to which
  \begin{equation}
  \label{EqMNormHyp}
    m(\bar N^u_p\bar f) > \|\Gamma_p\bar f\|^k,\quad 
    \|\bar N^s_p\bar f\| < m(\Gamma_p\bar f)^k,\qquad \forall\,p\in\Gamma,\ 0\leq k\leq r.
  \end{equation}
\end{enumerate}

By \cite[Theorem~4.1]{HirschPughShubInvariantManifolds}, these assumptions imply that in a neighborhood of $\Gamma$, there exist stable ($s$) and unstable ($u$) manifolds
\[
  \bar\Gamma^s,\,\bar\Gamma^u \subset \cX, \quad T_\Gamma\bar\Gamma^{u/s}=T\Gamma\oplus\bar N^{u/s},
\]
which are $\CI$ and locally invariant under $\bar f$, namely $\bar f(\bar\Gamma^s)\subset\bar\Gamma^s$ and $\bar f(\bar\Gamma^u)\supset\bar\Gamma^u$.

On the spacetime
\begin{equation}
\label{EqMSpacetime}
  \cM := \R_t \times \cX,
\end{equation}
define then the \emph{stationary model} $f_0\colon\cM\to\cM$ by\footnote{One can equally well consider time translations $t\mapsto t+1$ instead, as done in~\S\ref{SI}, in which case unstable and stable manifolds exchange roles in Theorem~\ref{ThmMMap} below. We consider~\eqref{EqMStatMod} for simpler comparison with~\cite[\S4]{HirschPughShubInvariantManifolds} since in this case the unstable manifold is canonical, and it is the unstable manifold which was explicitly discussed in the reference.}
\begin{equation}
\label{EqMStatMod}
  f_0(t,x) := (t-1,\bar f(x)).
\end{equation}

As perturbations of $f_0$, we consider smooth maps
\[
  f\colon \cM \to \cM,
\]
defined for $t>t_0\in\R$, with the following properties:
\begin{enumerate}[label=(II.\arabic*),ref=II.\arabic*]
\item\label{ItMPertTime} let $\pi_T\colon\cM\to\R_t$, $(t,x)\mapsto t$, denote the projection, then $\pi_T(f(t,x))=t-1$;
\item\label{ItMPertClose} $f$ is a $\rho\cC_b^r$-perturbation of $f_0$ for every $r$. That is, fixing a Riemannian metric on $\cX$ and denoting by $\exp$ its exponential map, we have
  \[
    f(t,x)=\bigl(t-1,\exp_{\bar f(x)}\wt V(t,\bar f(x))\bigr),\quad x\in\cU,
  \]
  where $\wt V(t,-)\in\cV(\cX)$, $\wt V\in\rho\CI_b(\{t>t_0\};T\cX)$, is a $t$-dependent vector field on $\cX$.
\end{enumerate}

Let us denote by
\begin{equation}
\label{EqMMapSpExt}
  \Gamma^{u/s}_0 := \R_t \times \bar\Gamma^{u/s}
\end{equation}
the stationary spacetime extension of the unstable and stable manifolds, and $\Gamma_0:=\R_t\times\Gamma$.

\begin{thm}
\label{ThmMMap}
  Under the assumptions~\eqref{ItMGamma}--\eqref{ItMNormHyp} and \eqref{ItMPertTime}--\eqref{ItMPertClose}, there exists a submanifold $\Gamma^u\subset\cM$ with the following properties:
  \begin{enumerate}
  \item\label{ItMMapUInv} $\Gamma^u$ is $f$-invariant in the sense that
    \begin{equation}
    \label{EqMMapUNiv}
      f(\Gamma^u) \cap \{t>t_0\} = \Gamma^u \cap \{t>t_0\};
    \end{equation}
  \item\label{ItMMapU} $\Gamma^u$ approaches $\Gamma^u_0$ as $t\to\infty$ in a $\rho\CI_b$ sense. This means: let $\bar\Gamma^u(\eps)$ denote an $\eps$-neighborhood of $\Gamma$ within $\bar\Gamma^u$, and fix a $\CI$ tubular neighborhood of $\bar\Gamma^u(\eps)$ in $\cX$. Extend this $t$-independently to a tubular neighborhood of $\Gamma^u_0(\eps)$ in $\cM$. Then, for small $\eps>0$, the unstable manifold $\Gamma^u$ is the graph of a function in the space $\rho\CI_b(\Gamma^u_0(\eps);N\Gamma^u_0)$;
  \item\label{ItMMapUUniq} $\Gamma^u$ is the unique manifold satisfying~\eqref{EqMMapUNiv} within the class of manifolds approaching $\Gamma^u_0$ in a $\rho\cC_b^1$ sense.
  \ctrsave
  \end{enumerate}

  Furthermore:
  \begin{enumerate}
  \ctrset
  \item\label{ItMMapSExt} there exists a (non-unique) manifold $\Gamma^s\subset\cM$ approaching $\Gamma^s_0$ as $t\to\infty$ in a $\rho\CI_b$ sense, that is, $\Gamma^s$ is the graph of a function in the space $\rho\CI_b(\Gamma^s_0(\eps);N\Gamma^s_0)$ (defined using the stationary extension of a tubular neighborhood of $\bar\Gamma^s$ analogously to~\eqref{ItMMapU}) so that $f(\Gamma^s)\cap\{t>t_0\}\subset\Gamma^s$.
  \end{enumerate}
  In these statements, the regularity of $\Gamma^u$ and $\Gamma^s$ is $\rho\cC_b^r$ if we relax assumption~\eqref{ItMPertClose} by only assuming $\rho\cC_b^r$-regularity, for some fixed $r\geq 1$, of $\wt V$.\footnote{One can likewise relax assumptions~\eqref{ItMSplit} and~\eqref{ItMNormHyp} by assuming that the bundle splitting~\eqref{EqMSplit} is merely continuous, and only assuming $r$-normal hyperbolicity for $r$ fixed, in which case $\Gamma$, $\bar\Gamma^{u/s}$ are merely $\cC^r$. This adds only minor technical complications to the proof, which can be handled by smoothing techniques as in~\cite[\S4]{HirschPughShubInvariantManifolds}. We do not state the theorem in this generality, as it will not be needed in our application.}
\end{thm}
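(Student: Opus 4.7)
The approach is to deduce the theorem from the invariant-section Theorem~\ref{ThmMInv} via graph-transform reductions in tubular neighborhoods of $\bar\Gamma^u$ and $\bar\Gamma^s$. The normal hyperbolicity hypothesis~\eqref{EqMNormHyp} at $k=0$ (resp.\ $k=r$) translates directly into the fiber-contraction $k_x<1$ (resp.\ the combined $k_x\alpha_x^r<1$) hypothesis of that theorem.

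\emph{Unstable manifold.} Fix $\eps>0$ small. Using the exponential map relative to the splitting~\eqref{EqMSplit}, identify a tubular neighborhood of $\bar\Gamma^u(\eps)\subset\cX$ with a disk subbundle of $\bar N^s|_{\bar\Gamma^u(\eps)}\to\bar\Gamma^u(\eps)$, and extend $t$-independently to a neighborhood of $\Gamma^u_0(\eps)\subset\cM$. In these coordinates $(t,x,\nu)$, submanifolds transverse to the $\bar N^s$-fibers are graphs of sections $\sigma$ of $\pi_\cX^*\bar N^s$ over $\cM_1=\R_t\times\cX_1$, with $\cX_1\Subset\bar\Gamma^u(\eps)$. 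The perturbed map $f$ induces a graph transform $f_\sharp$ on such sections, defined by solving an implicit equation $y=F(t,x,\sigma(t,x))$ for $x$ in terms of $y$ (with $F$ the base component of $f$) via the implicit function theorem, which is applicable for $t$ large by the smallness of $\wt V\in\rho\CI_b$. This precisely fits the framework of Theorem~\ref{ThmMInv}: base map $f_0(t,x)=(t-1,\bar f(x))$ with $\bar f$ overflowing $\cX_1\subset\bar\Gamma^u$; fiber bundle $\pi_\cX^*\bar N^s$ with stationary fiber map $\bar N^s_x\bar f$ giving $k_x=\|\bar N^s_x\bar f\|<1$; and base expansion $\alpha_x=\max(m(\Gamma_p\bar f)^{-1},m(\bar N^u_p\bar f)^{-1})$ with $k_x\alpha_x^r<1$ from~\eqref{EqMNormHyp}. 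The condition $\pi_T\wt f=0$ holds by~\eqref{ItMPertTime}. Theorem~\ref{ThmMInv} then produces a unique invariant section $\sigma_u\in\sigma_0+\rho\cC_b^r$ for every $r\geq 1$, and by uniqueness of the construction at each regularity $\sigma_u\in\rho\CI_b$; its graph is $\Gamma^u$, and properties~\eqref{ItMMapUInv}-\eqref{ItMMapUUniq} follow directly, \eqref{ItMMapUUniq} being the uniqueness of the invariant section.

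\emph{Stable manifold and main obstacle.} The symmetric setup uses a tubular neighborhood of $\bar\Gamma^s$ with normal bundle $\bar N^u$, working with the backward dynamics $f^{-1}$: its base-map restriction $\bar f^{-1}$ overflows $\bar f(\bar\Gamma^s(\eps))$, $\bar N^u$ is contracted with $k_x=m(\bar N^u_x\bar f)^{-1}<1$, and $k_x\alpha_x^r<1$ with $\alpha_x=\max(\|\Gamma_p\bar f\|,\|\bar N^s_p\bar f\|)$ again follows from~\eqref{EqMNormHyp}. The substantive difference is that $f^{-1}$ has $\pi_T$-shift $+1$, so the invariance equation reads as a forward-in-$t$ propagation (given $\sigma$ at time $t$, the equation determines $\sigma$ at time $t+1$ via the fiber contraction $k_x$) rather than a contraction-mapping fixed-point problem on all of $\{t>t_0\}$. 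One solves it by prescribing an arbitrary initial section at $t=t_0$ close to $\sigma_0$ and propagating forward; the contraction $k_x<1$ keeps the iterate in $\sigma_0+\rho\cC_b^r$, and any two choices of initial section yield $\Gamma^s$'s differing by $O(k_x^{t-t_0})$, which is dominated by $\rho(t)$ — accounting for the non-uniqueness consistent with the weaker one-sided invariance $f(\Gamma^s)\cap\{t>t_0\}\subset\Gamma^s$. The principal technical step in both cases is the careful realization of the graph transform as an instance of Theorem~\ref{ThmMInv}'s setup, in particular solving $y=F(t,x,\sigma(t,x))$ in $\rho\cC_b^r$-spaces; once accomplished, the rest is precisely Theorem~\ref{ThmMInv} (with a straightforward time-reversed variant for $\Gamma^s$), and the regularity statement when $\wt V$ is merely $\rho\cC_b^r$ for fixed $r\geq 1$ follows from the same input.
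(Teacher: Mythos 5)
Your proposal identifies the right contraction rates and the overall Hadamard graph-transform strategy, but the central claim---that the construction of $\Gamma^u$ ``precisely fits the framework of Theorem~\ref{ThmMInv}''---contains a genuine gap. In Theorem~\ref{ThmMInv} the graph transform is $\ell_\sharp\sigma=\ell\sigma f^{-1}$ where $\ell$ is a \emph{fixed} fiber bundle map covering a \emph{fixed} base diffeomorphism $f$; in particular the reparametrization of the base is by the $\sigma$-independent map $f^{-1}$. For the unstable manifold, the perturbed map $f$ of $\cM$ does \emph{not} preserve the fibering of the tubular neighborhood of $\Gamma^u_0$ by $\bar N^s$-disks: the image of a fiber $\{(t,x^U)\}\times\bar N^s_{x^U}(\eps)$ under $f$ is a curved disk, not a fiber. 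Hence the graph transform must be written $f_\sharp\sigma=f\sigma g$ with $g$ the right inverse of $\pi^U_0 f\sigma$, and this $g$ \emph{depends on $\sigma$}. There is therefore no fiber bundle map $\ell$ covering a fixed base map for which $f_\sharp$ becomes $\ell_\sharp$ in the sense of Theorem~\ref{ThmMInv}; the hypotheses of that theorem are simply not met, and the reduction fails at the first step. (The phrase ``solving $y=F(t,x,\sigma(t,x))$ for $x$ in terms of $y$'' is exactly the construction of the $\sigma$-dependent $g$, and that is precisely what Theorem~\ref{ThmMInv} does not allow.)

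What the paper actually does is to carry out the graph-transform argument from scratch (constructing $g$, proving $f_\sharp$ is well defined and a contraction on a space of Lipschitz sections, then passing to Lipschitz jets for pointwise differentiability, then to tangent-plane sections for $\cC^1$), and only \emph{then} invokes Theorem~\ref{ThmMInv} in the final inductive step: once the $\rho\cC_b^1$ invariant section $\sigma_f$ and hence the tangent bundle of $\Gamma^u$ are in hand, the induced tangent-plane map $L f$ \emph{is} a genuine fiber bundle map over the fixed base map $\pi^U_0 f\sigma_f$, and Theorem~\ref{ThmMInv} can be applied to $L f$ to bootstrap from $\rho\cC_b^{r-1}$ to $\rho\cC_b^r$. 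So Theorem~\ref{ThmMInv} plays only a supporting role at the very end, not the foundational role your proposal assigns to it. Your stable-manifold discussion has the right intuition (forward-in-$t$ propagation, non-uniqueness from the choice of initial data, domination by $\rho$), but it leaves the actual fixed-point/regularity argument unaddressed; the paper resolves this by constructing an explicitly $f$-invariant initial slab $\Gamma^s_\bullet$ and running a modified graph-transform fixed-point problem on sections agreeing with $\Gamma^s_\bullet$, followed by the same jet/tangent-plane/induction machinery. You should replace the reduction to Theorem~\ref{ThmMInv} by the direct graph-transform argument, or at least explain how to obtain a $\sigma$-independent base map, before the rest of the proposal can go through.
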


\begin{figure}[!ht]
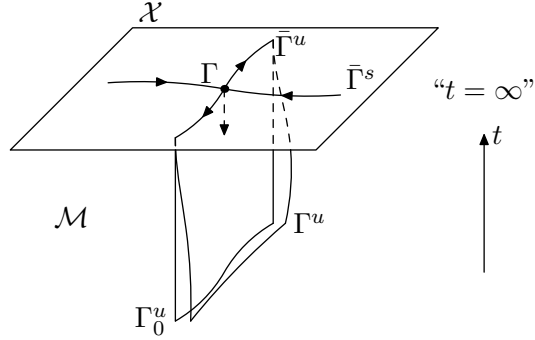

  \centering
  \inclfig{FigMMap}
  \caption{Illustration of Theorem~\ref{ThmMMap}, parts~\eqref{ItMMapUInv} and \eqref{ItMMapU}. The stationary model $f_0$ (or $\bar f$) should be thought of living at ``$t=\infty$'', see Remark~\ref{RmkMCptExp}. Its invariant ($\Gamma$), stable ($\Gamma^s$), and unstable ($\bar\Gamma^u$) manifolds are shown, as well as the stationary extension $\Gamma^u_0$. The unstable manifold for $f$ is a $\rho\CI_b$-graph over $\Gamma^u_0$. Also drawn is the arrow indicating the action of $f$ in the time variable.}
  \label{FigMMap}
\end{figure}

Assuming that $\bar\Gamma^u$ is orientable, an equivalent formulation of~\eqref{ItMMapU} is the following:
\begin{enumerate}[label=(\ref*{ItMMapU}'),ref=\ref*{ItMMapU}']
\item\label{ItMMapUalt} let $\bar\varphi^u\in\CI(\cX)$ denote a defining function of $\bar\Gamma^u$, that is, $(\bar\varphi^u)^{-1}(0)=\bar\Gamma^u$, and $d\bar\varphi^u\neq 0$ on $\bar\Gamma^u$. Then there exists a function $\wt\varphi^u\in\rho\CI_b(\cM)$ such that
  \begin{equation}
  \label{EqMMapUalt}
    \varphi^u(t,x):=\bar\varphi^u(x)+\wt\varphi^u(t,x)
  \end{equation}
  is a defining function of $\Gamma^u$.
\end{enumerate}

Similarly, claim~\eqref{ItMMapSExt} can be restated using defining functions when $\bar\Gamma^s$ is orientable:
\begin{enumerate}[label=(\ref*{ItMMapSExt}'),ref=\ref*{ItMMapSExt}']
\item\label{ItMMapSExtalt} let $\bar\varphi^s\in\CI(\cX)$ denote a defining function of $\bar\Gamma^s$. There exists a function $\wt\varphi^s\in\rho\CI_b(\cM)$ such that
  \begin{equation}
  \label{EqMMapSExtalt}
    \varphi^s(t,x):=\bar\varphi^s(x)+\wt\varphi^s(t,x)
  \end{equation}
  is a defining function of a manifold $\Gamma^s$ as in~\eqref{ItMMapSExt}.
\end{enumerate}

\begin{rmk}
\label{RmkMCptExp}
  Introducing a coordinate $\tau_e:=e^{-t}$ as in the introduction, the action of $f_0$ in the time variable is given by $\tau_e\mapsto e\tau_e$, hence $f_0$ induces a map on $[0,\infty)_{\tau_e}\times\cX$ for which $\{0\}\times\bar\Gamma^s$ is the stable manifold and $[0,\infty)\times\bar\Gamma^u$ the unstable manifold. (While $\Gamma^u_0$ has an invariant interpretation as the interior of the unstable manifold of $\{0\}\times\bar\Gamma$, the manifold $\Gamma^s_0$ has no such interpretation, and we only introduce it for convenience.) The perturbations considered here, which are very singular in $\tau_e$ (for $\rho(t)=\la t\ra^{-\alpha}$ of size $\la\log\tau_{\rm\exp}\ra^{-\alpha}$), can be analyzed because of the particular $t$-dependence of $f$. Namely, $\pi_T f$ is a family (depending on $t$) of smooth small perturbations of $\bar f$; one can show that this is already sufficient to guarantee the existence of a family of manifolds $\Gamma^u(t)\subset\cX$ which remain close to $\bar\Gamma^u$ for all $t$ and are invariant in the sense that $f(t,\Gamma^u(t))\subset\Gamma^u(t-1)$. The point here is that under our stronger assumptions on $f$, we can obtain more precise control on $\Gamma^u$.
\end{rmk}

\begin{proof}[Proof of Theorem~\usref{ThmMMap}]
We closely follow the constructions and arguments of~\cite[\S4]{HirschPughShubInvariantManifolds} and explain the required extensions and modifications. The first, main, part of the proof concerns claims~\eqref{ItMMapUInv}--\eqref{ItMMapUUniq} about $\Gamma^u$; in the second part, we prove claim~\eqref{ItMMapSExt} using a simple adaptation of the methods from the first part. As in the reference, the construction of $\Gamma^u$ proceeds in several steps:
  \begin{itemize}
  \item After some preliminary simplifications, we will find $\Gamma^u$ as the graph of a section $\sigma$ via a fixed point argument using a graph transform \`a la Hadamard, which was already used in~\S\ref{SsInv}. Here, starting e.g.\ with the candidate $\Gamma^u_0$, we replace the current candidate by its image under $f$, which stretches it out along $\Gamma^u_0$ and flattens it in the stable directions. (The perturbative part of $f$ is damped by the main part, $f_0$, in the stable directions.) In steps 1 and 2, we show that this is a well-defined procedure on candidate sections which are small sections of the normal bundle $\Gamma^u_0$.
  \item In step 3, we prove that this graph transform is a contraction on a space of Lipschitz sections decaying (together with its pointwise Lipschitz constants) at rate $\rho$, thus furnishing $\Gamma^u$ as a Lipschitz submanifold $\rho$-close to $\Gamma^u_0$.\footnote{As a simple toy example, one may keep the following in mind: $\Gamma$ is a point, $\bar\Gamma^u$ and $\bar\Gamma^s$ (with coordinates $x^U$ and $s$, respectively) are 1-dimensional, $\bar f(x^U,s)=(2 x^U,\tfrac12 s)$, so $f_0(t,x^U,s)=(t-1,2 x^U,\tfrac12 s)$, and the perturbed map is $f(t,x^U,s)=(t-1,2 x^U,\tfrac12 s+\rho(t))$. The graph transform, denoted $f_\sharp$ in~\eqref{EqMMapContr}, maps the section $(t,x^U)\mapsto\sigma(t,x^U)=(t,x^U,\breve\sigma(t,x^U))$ into $f_\sharp\sigma(t,x^U)=\bigl(t,x^U,\tfrac12\breve\sigma(t+1,x^U/2)+\rho(t+1)\bigr)$. Acting on $\cO(\rho)$-sections, the unique fixed point of this transform is the section $\sigma_f(t,x^U) = \sum_{j=0}^\infty 2^{-j}\rho(t+1+j)$.}
  \item In step 4, we improve this to pointwise differentiability using the contraction mapping principle applied to a graph transform on a priori discontinuous families of Lipschitz jets. The key point, as in the proof of Theorem~\ref{ThmMInv}, is that the graph transform preserves jets of differentiable sections, which implies the pointwise differentiability of $\Gamma^u$.
  \item In step 5, we show in a similar manner (considering the graph transform acting on tangent planes based at points of $\Gamma^u$, on the one hand on a priori discontinuous, on the other hand on continuous sections) that the tangent distribution of $\Gamma^u$ must be continuous.
  \item Higher regularity is proved in step 6 using an inductive argument in which one improves the regularity of the tangent bundle of $\Gamma^u$.
  \end{itemize}

  To start, fix a Riemannian metric on $\cX$ which restricts to that of assumption~\eqref{ItMNormHyp} (with $r=1$) at $\Gamma$. For $\eps>0$ small, denote by $\bar\Gamma^u(\eps)$ the $\eps$-neighborhood of $\Gamma$ within $\bar\Gamma^u$. Extend the bundle $\bar N^s\to\Gamma$ of stable tangent directions and its fiber metric to a $\CI$ vector bundle $\bar\cS'\to\bar\Gamma^u(\eps)$. Using the exponential map on $\cX$, embed an $\eps$-neighborhood, $\bar\cS'(\eps)$, of the zero section diffeomorphically into $\cX$. For convenience, we fix a vector bundle $\bar\cS''\to\bar\Gamma^u(\eps)$ such that $\bar\cS:=\bar\cS'\oplus\bar\cS''$ is trivial, i.e.\ $\bar\cS\cong\bar\Gamma^u(\eps)\times S$ for some fixed vector space $S$. Choose the fiber metric on $\bar\cS$ to be the direct sum of that on $\bar\cS'$ and any fixed metric on $\bar\cS''$. We extend $\bar f$ to a map on $\bar\cS(\eps)$ by setting $\bar f(s'\oplus s''):=\bar f(s')\oplus 0$; we likewise extend $f$ to a map on $\R_t\times\bar\cS(\eps)$ by setting
  \begin{equation}
  \label{EqMMapTrivExt}
    f(t,s'\oplus s''):=f(t,s')\oplus 0,
  \end{equation}
  similarly for $f_0$. We let $\cS:=\R_t\times\bar\cS$.

  Let $C_\Sigma>0$ denote a constant, which will be specified in the course of the proof and only depends on Lipschitz properties of $f$, see~\eqref{EqMMapCS}. For small $\eps>0$, we set
  \begin{equation}
  \label{EqMMapTint}
  \begin{gathered}
    T(\eps) := \{ t\in\R\colon\rho(t)<\eps^2 \}, \\
    \Gamma^u_0(\eps) := T(\eps) \times \bar\Gamma^u(\eps), \quad
    \cS(\eps) := T(\eps) \times \bar\cS(\eps),
  \end{gathered}
  \end{equation}
  in particular $\cS(\eps)\cong\Gamma^u_0(\eps)\times S$ is trivial; we denote the projection by
  \[
    \pi^U_0 \colon \cS(\eps) \to \Gamma^u_0(\eps),
  \]
  and $\bar\pi^U:=\pi_\cX\pi^U_0$, where $\pi_\cX\colon\cM\to\cX$ is the projection. (We use the capital letter `$U$' to emphasize that this projects onto more than the unstable tangent directions $\bar N^u$, which is only \emph{part} of the tangent bundle of $\bar\Gamma^u$ as it does not include $T\Gamma$.) Denote points on $\bar\Gamma^u$ by $x^U$.
  
  \textit{\textbf{\underline{Part 1:}} proof of  claims~\eqref{ItMMapUInv}--\eqref{ItMMapUUniq}.} We shall find $\Gamma^u\subset\cS(\eps)$ as the image of a section in the space
  \begin{equation}
  \label{EqMMapSect}
    \Sigma(\eps) := \bigl\{ \text{sections}\ \sigma\colon\Gamma^u_0(\eps)\to\cS(\eps)\colon |\sigma(t,x^U)|\leq C_\Sigma\rho(t), \ L_{(t,x^U)}(\breve\sigma)\leq C_\Sigma\rho(t) \bigr\},
  \end{equation}
  where we define the fiber value map $\breve\sigma\colon\Gamma^u_0(\eps)\to S$ by $\sigma(t,x^U)=(t,x^U,\breve\sigma(t,x^U))$; the Lipschitz constant is defined analogously to~\eqref{EqMInvLipDef} using the triviality of $\bar\cS$. The space $\Sigma(\eps)$, equipped with the $\cC_b^0$ metric, is complete. The idea is to define a graph transform on $\Sigma(\eps)$ by mapping $\sigma$ to a section $f_\sharp\sigma$ whose graph is the image of the graph of $\sigma$ under $f$: we wish to take
  \begin{equation}
  \label{EqMMapContr}
    f_\sharp\sigma := f\sigma g,
  \end{equation}
  where $g\colon\bar\Gamma^u(\eps)\to\bar\Gamma^u(\eps)$ is a right inverse of $\pi^U_0 f\sigma$. We shall see that this is a contraction for two reasons: first, $f$ expands, thus $g$ contracts, the base $\Gamma^u_0(\eps)$, and second, $f$ contracts the fibers. Here, the behavior of $f$ is dominated by $f_0$, while the perturbation only affects these statements by a decaying (as $t\to\infty$) amount.

  \textit{\underline{Step 1:} construction of $g$.} We first show that the right inverse $g$ is well-defined when $\eps>0$ is sufficiently small. We introduce local coordinates on $\bar\Gamma^u(\eps_0)$, $\eps_0>0$ small and fixed, using the Riemannian exponential map $\exp^{\bar\Gamma^u}$ as follows: for $p\in\Gamma$ and small $v^U=(v,n)\in T_p\bar\Gamma^u=T_p\Gamma\oplus\bar N^u_p$, we let
  \begin{equation}
  \label{EqMMapChi}
    \bar\chi_p(v^U) := \exp^{\bar\Gamma^u}_p(v^U+h_p(v^U)),
  \end{equation}
  where $h_p\colon T_p\bar\Gamma^u\to T_p\bar\Gamma^u$ vanishes quadratically at $0$ and is chosen such that for $v\in T_p\Gamma$, we have $\exp^{\bar\Gamma^u}_p(v+h_p(v))=\exp_p^\Gamma(v)\in\Gamma$, thus straightening out $\Gamma$. Using the triviality of $\bar\cS$, this induces local coordinates $(v^U,s)\in T_p\bar\Gamma^u\oplus S$ on $\bar\cS$ via
  \begin{equation}
  \label{EqMMapE}
    \bar e_p(v^U,s) := (\bar\chi_p(v^U), s).
  \end{equation}
  The local coordinate representation of $\bar f$,
  \[
    \bar f_p := \bar e_{\bar f(p)}^{-1} \bar f \bar e_p \colon T_p\bar\Gamma^u \times S \to T_{\bar f(p)}\bar\Gamma^u \times S,
  \]
  can be written as $\bar f_p=D_p\bar f + \bar r_p$, where $\bar r_p$ vanishes quadratically at $0$. Since $\bar f$ preserves $\Gamma$ as well as $\bar\Gamma^u$, $\bar f_p$ preserves the origin of $\bar N^u\oplus S$ as well as of $S$ (considered as subbundles of $T\Gamma\oplus\bar N^u\oplus S$). Moreover,
  \begin{equation}
  \label{EqMMapTf}
    D_p\bar f = \begin{pmatrix} \Gamma_p\bar f\oplus\bar N^u_p\bar f & 0 \\ 0 & D_p\bar f|_{\bar\cS_p} \end{pmatrix}.
  \end{equation}
  The charts~\eqref{EqMMapChi}--\eqref{EqMMapE} have natural spacetime extensions $\chi_p$ (giving coordinates on $\Gamma^u_0$) and $e_p$ (giving coordinates on $\cS$):
  \[
    \chi_p(t,v^U) := (t,\bar\chi_p(v^U)), \quad
    e_p(t,v^U,s) := (t,\bar e_p(v^U,s)).
  \]
  Writing $(f_0)_p=e_{\bar f(p)}^{-1}f_0 e_p$ and $f_p=e_{\bar f(p)}^{-1}f e_p$ (note that we use the chart centered at $\bar f(p)$ even for the perturbed map $f$), we have $(f_0)_p(t,v^U,s)=(t-1,\bar f_p(v^U,s))$, while the perturbation
  \[
    \wt f_p := f_p - (f_0)_p,\quad
    \pi_T\wt f_p(t,v^U,s)\equiv 0,
  \]
  satisfies
  \[
    \wt f_p\in\rho\CI_b
  \]
  as a map $\{t>t_0\}\times\cW\to\{0\}\times T_{\bar f(p)}\bar\Gamma^u\times S$ where $\cW$ is a neighborhood of the origin in $T_p\bar\Gamma^u\times S$, as follows from assumptions~\eqref{ItMPertTime}--\eqref{ItMPertClose} (increasing $t_0$ if necessary to stay within the coordinate patches $e_p$ and $e_{\bar f(p)}$).

  Let now $\sigma\in\Sigma(\eps)$. Write $\sigma_p:=e_p^{-1}\sigma\chi_p\colon T(\eps)\times T_p\bar\Gamma^u\to T(\eps)\times T_p\bar\Gamma^u\times S$, which has the form $\sigma_p(t,v^U)=(t,v^U,\breve\sigma_p(t,v^U))$, and satisfies the bounds in~\eqref{EqMMapSect} (with respect to the fixed fiber metric on $S$ coming from $\bar\cS_p$), with the right hand sides multiplied by $1+\cO(\eps)$ (since the norm on the fibers of $\cS$ may change smoothly away from $p$). We then have
  \[
    \chi_{\bar f(p)}^{-1}\pi^U_0 f\sigma\chi_p = \pi^U_0 e_{\bar f(p)}^{-1}f e_p e_p^{-1}\sigma\chi_p = \pi^U_0 f_p\sigma_p = L+(0,\bar\pi^U\wt L),
  \]
  where the main and remainder terms (for present purposes) are, respectively,
  \begin{equation}
  \label{EqMMapFpSigmap}
    L(t,v^U)=(t-1,D_p\bar f(v^U)), \quad
    \wt L(t,v^U)=D_p\bar f\circ(0,\breve\sigma_p(t,v^U))+\bar r_p(v^U,\breve\sigma_p)+\wt f_p\sigma_p(t,v^U).
  \end{equation}
  (Thus, $L+(0,\wt L)$ is the local coordinate representation of $f\sigma$.) Fix $\lambda,\mu$ and $\omega,\omega'$ (depending on $p$) so that
  \[
    0<\mu<\ul\mu:=m(\Gamma_p\bar f), \quad
    1<\lambda<\ul\lambda:=m(\bar N^u_p\bar f), \quad
    0<\omega<\omega'<1,\quad \lambda\omega>1.
  \]
  We claim that, for $\eps_0>0$ fixed and small, we have, for all sufficiently small $\eps>0$:
  \begin{gather}
  \label{EqMMapInj}
    \chi_{\bar f(p)}^{-1}\pi_0^U f\sigma\chi_p\ \ \text{is injective on}\ \ T(\eps)\times(T_p\Gamma(\eps_0)\oplus\bar N^u_p(\omega'\eps)), \\
  \label{EqMMapOverflow}
    \chi_{\bar f(p)}^{-1}\pi_0^U f\sigma\chi_p\bigl(T(\eps)\times(T_p\Gamma(\omega\eps)\oplus\bar N^u_p(\omega\eps))\bigr) \supset T(\eps)\times(T_{\bar f(p)}\Gamma(\mu\omega\eps)\oplus\bar N^u_{\bar f(p)}(\lambda\omega\eps)).
  \end{gather}
  (Note that if $\wt L$ were identically zero, this would be clear.) Now, only points $(t,v^U)$ with the same $t$-values can possibly map to the same point under $L+(0,\bar\pi^U\wt L)$, hence~\eqref{EqMMapInj} follows from~\eqref{EqMMapTf} (which implies that $\bar\pi^U$ annihilates the first term of $\wt L$) and
  \begin{equation}
  \label{EqMMapInjEst}
  \begin{split}
    &\big|\chi_{\bar f(p)}^{-1}\pi_0^U f\sigma\chi_p(t,v^U)-\chi_{\bar f(p)}^{-1}\pi_0^U f\sigma\chi_p(t,w^U)\big| \\
    &\qquad \geq |D_p\bar f(v^U-w^U)| - |\bar r_p(v^U,\breve\sigma_p(t,v^U))-\bar r_p(w^U,\breve\sigma_p(t,w^U))| \\
    &\hspace{8em} -|\bar\pi^U\wt f_p(t,v^U,\breve\sigma_p(t,v^U))-\bar\pi^U\wt f_p(t,w^U,\breve\sigma_p(t,w^U))| \\
    &\qquad \geq \bigl(\ul\mu-C_r(\eps_0+\eps)(1+C_\Sigma)-\wt C\rho(t)(C_\Sigma\rho(t)+1)\bigr)(1-\cO(\eps_0+\eps))|v^U-w^U| \\
    &\qquad \geq (\ul\mu-o(1))|v^U-w^U|,\qquad \eps_0,\eps\to 0,
  \end{split}
  \end{equation}
  where $\wt C$ is greater than the Lipschitz constant of $\rho^{-1}\wt f_p$, and $C_r$ bounds the $\cC^2$ norm of $\bar r_p$; we may fix $\wt C,C_r$ uniformly for $p\in\Gamma$. The factor $1-\cO(\eps_0+\eps)$ accounts for the fact that the norms used in the definition of Lipschitz constants may vary away from $p$.

  To prove~\eqref{EqMMapOverflow}, note that $|\bar\pi^U\wt L(t,v^U)|\leq C_r\eps^2+\wt C\rho(t)$ for $(t,v^U)\in T(\eps)\times(T_p\Gamma(\omega\eps)\oplus\bar N^u_p(\omega\eps))$. Let now $t\in T(\eps)$, and consider $y^U=(y,n)\in T_{\bar f(p)}\Gamma\oplus\bar N^u_{\bar f(p)}$ satisfying (slightly increasing $C_r$ and $\wt C$ to accommodate the $(1-\cO(\eps_0+\eps))$ factor in~\eqref{EqMMapInjEst}, now with $\eps_0=\eps$)
  \begin{equation}
  \label{EqMMapOverflowGen}
    |y| \leq (\ul\mu-\delta)\omega\eps-\ul\mu(C_r\eps^2+\wt C\rho(t+1)), \quad
    |n| \leq (\ul\lambda-\delta)\omega\eps-\ul\lambda(C_r\eps^2+\wt C\rho(t+1)),
  \end{equation}
  where $0<\delta<\min(\ul\mu-\mu,\ul\lambda-\lambda)$. Using the form~\eqref{EqMMapTf} of $D_p\bar f$, we see that
  \begin{equation}
  \label{EqMMapIt}
    G \colon v^U \mapsto (D_p\bar f)^{-1}\bigl(y^U - \ol\pi^U\wt L(t+1,v^U)\bigr)
  \end{equation}
  maps $T_p\Gamma(\omega\eps)\oplus\bar N^u_p(\omega\eps)$ into itself for small $\eps>0$ since $\|(\Gamma_p\bar f)^{-1}\|\leq\ul\mu^{-1}$ and $\|(\bar N^u_p\bar f)^{-1}\|\leq\ul\lambda^{-1}$. It also is a contraction, since, similarly to~\eqref{EqMMapInjEst},
  \[
    |G(v^U)-G(w^U)| \leq \ul\mu^{-1}\bigl(2\eps C_r+\wt C\rho(t+1)(C_\Sigma\rho(t+1)+1)\bigr)|v^U-w^U| = o(1)|v^U-w^U|
  \]
  as $\eps\to 0$. Letting $v^U$ denote the unique fixed point of $G$, we have
  \begin{equation}
  \label{EqMMapItSolv}
    \chi_{\bar f(p)}^{-1}\pi^U_0 f\sigma\chi_p(t+1,v^U)=(t,y^U).
  \end{equation}
  Choosing $\eps$ sufficiently small and recalling from~\eqref{EqMMapTint} that $\rho(t+1)<\eps^2$, the second summand on the right in~\eqref{EqMMapOverflow} is contained in the set of $(t,y,n)$ satisfying~\eqref{EqMMapOverflowGen} and $t\in T(\eps)$, hence we have proved~\eqref{EqMMapOverflow}.
  
  We define
  \[
    g \colon \cS(\eps) \to T(\eps) \times \bigcup_{p\in\Gamma} \chi_p\bigl(T_p\Gamma(\omega\eps)\oplus\bar N^u_p(\omega\eps)\bigr)
  \]
  to be the map which in the $\chi_p$, $\chi_{\bar f(p)}$ charts is given by $(t,y^U)\mapsto(t+1,v^U)$ with $v^U$ as in~\eqref{EqMMapItSolv}. In these charts, we record (using~\eqref{EqMMapIt}) that for $(t+1,v^U)=g(t,y^U)$ and $(t'+1,v^{\prime U})=g(t',y^{\prime U})$, and $\rho_\pm=\rho_\pm(t+1,t'+1)$,
  \begin{align*}
    |v^U-v^{\prime U}| &\leq \ul\mu^{-1}\Bigl( |y^U-y^{\prime U}| + \bigl(2\eps C_r+\wt C\rho_-(C_\Sigma\rho_-+1)\bigr)|v^U-v^{\prime U}| \\
      &\qquad\hspace{8em} + \bigl(2\eps C_r C_\Sigma + \wt C(C_\Sigma\rho_++1)\bigr)\rho_+|t-t'|\Bigr);
  \end{align*}
  this uses the Lipschitz estimate~\eqref{EqMLipEst} applied to $\wt f_p$ as well as the Lipschitz bounds on $\sigma$. Hence
  \begin{equation}
  \label{EqMMapInvEst}
    |g(t,y^U)-g(t',y^{\prime U})| \leq (\ul\mu^{-1}+o(1))|y^U-y^{\prime U}| + (1+o(1))|t-t'|,\qquad \eps\to 0.
  \end{equation}

  \textit{\underline{Step 2:} mapping properties of $f_\sharp$.} For $\sigma\in\Sigma(\eps)$, we can now define $f_\sharp\sigma=f\sigma g$ as a section $\Gamma^u_0(\eps)\to\cS$. We proceed to check that $f_\sharp\sigma\in\Sigma(\eps)$. Let us work in local coordinates as above, and let $(t+1,v^U)=g(t,y^U)$. Using~\eqref{EqMMapFpSigmap}, and writing $\pi_S$ for the projection onto the fiber, the length of $(\pi_S f_\sharp\sigma)(t,y^U)\in\cS_{(t,y^U)}=S$ is bounded by
  \begin{equation}
  \label{EqMMapLinfty}
  \begin{split}
    |(\pi_S f_\sharp\sigma)(t,y^U)| &\leq |\pi_S D_p\bar f(v^U,\breve\sigma_p(t+1,v^U))| + |\pi_S\bar r_p(v^U,\breve\sigma_p(t+1,v^U))| \\
      &\qquad + |\pi_S\wt f_p(t+1,v^U,\breve\sigma_p(t+1,v^U))| \\
      &\leq \|\bar N^s_p\bar f\| C_\Sigma \rho(t+1) + \eps C_r C_\Sigma\rho(t+1) + \wt C\rho(t+1),
  \end{split}
  \end{equation}
  where we used crucially that $\bar f$ preserves $\bar\Gamma^u$ in the estimate of the term involving $\bar r_p$, as this gives $\pi_S\bar r_p(v^U,0)=0$. Since $\|\bar N^s_p\bar f\|<1$, we may fix
  \begin{equation}
  \label{EqMMapCS}
    C_\Sigma > \frac{\wt C}{\inf_{p\in\Gamma}\bigl(\min(1,m(\Gamma_p\bar f))-\|\bar N^s_p\bar f\|\bigr)}.
  \end{equation}
  For small $\eps>0$, this implies the desired estimate
  \begin{equation}
  \label{EqMMapBounded}
    |\pi_S f_\sharp\sigma|\leq C_\Sigma\rho.
  \end{equation}

  In order to verify the Lipschitz condition of $\Sigma(\eps)$ for $f_\sharp\sigma$, we estimate, using~\eqref{EqMMapInvEst},
  \begin{equation}
  \label{EqMMapLipschitz}
  \begin{split}
    &L_{(t,y^U)}(\pi_S f_\sharp\sigma) \\
    &\quad\leq L_{(t+1,v^U)}(\pi_S f\sigma) L_{(t,y^U)}(g) \\
    &\quad\leq \bigl(\|\bar N^s_{v^U}\bar f\| C_\Sigma\rho(t+1) + \eps C_r(C_\Sigma\rho(t+1)+1) + \wt C\rho(t+1)(C_\Sigma\rho(t+1)+1)\bigr) \\
    &\quad\qquad \times \max(\ul\mu^{-1},1)(1+o(1)) \\
    &\quad \leq C_\Sigma\rho(t)
  \end{split}
  \end{equation}
  for small $\eps>0$ due to our choice~\eqref{EqMMapCS}.
  
  \textit{\underline{Step 3:} $f_\sharp$ is a contraction.} We next show that $f_\sharp$ is a contraction on $\Sigma(\eps)$ equipped with the (complete!) $\cC_b^0$ metric. To this end, let $\sigma,\sigma'\in\Sigma(\eps)$ and denote by $g,g'$ the right inverses of $\pi^U_0 f\sigma$, $\pi^U_0 f\sigma'$. Recalling the definition of $\wt L$ from~\eqref{EqMMapFpSigmap}, let us make the dependence on $\sigma$ explicit by writing $\wt L_\sigma$. Using the construction of $g$, resp.\ $g'$, via the fixed point argument involving~\eqref{EqMMapIt}, with $\wt L$ replaced by $\wt L_\sigma$, resp.\ $\wt L_{\sigma'}$, we estimate at a point $(t,y^U)$, with $y^U$ $\eps$-close to $p\in\Gamma$, and using the local coordinates as above:
  \begin{equation}
  \label{EqMMapDiffG1}
  \begin{split}
    |g-g'|&\leq \ul\mu^{-1}|\bar\pi^U\wt L_\sigma(g)-\bar\pi^U\wt L_{\sigma'}(g')| \\
    &\leq |\bar\pi^U\bar r_p(\bar\pi^U g,\breve\sigma_p(t+1,\bar\pi^U g))-\bar\pi^U\bar r_p(\bar\pi^U g',\breve\sigma'_p(t+1,\bar\pi^U g'))| \\
    &\quad + |\bar\pi^U\wt f_p(t+1,\bar\pi^U g,\breve\sigma_p(t+1,\bar\pi^U g))-\bar\pi^U\wt f_p(t+1,\bar\pi^U g',\breve\sigma'_p(t+1,\bar\pi^U g'))| \\
    &\leq \bigl(\eps C_r+\wt C\rho(t+1)(1+C_\Sigma\rho(t+1))\bigr)|\bar\pi^U g-\bar\pi^U g'| \\
    &\quad + (\eps C_r+\wt C\rho(t+1))|\breve\sigma_p(t+1,\bar\pi^U g)-\breve\sigma'_p(t+1,\bar\pi^U g)|;
  \end{split}
  \end{equation}
  the first term on the right can be absorbed into the left hand side, and we thus obtain, in our local coordinates,
  \begin{equation}
  \label{EqMMapDiffG2}
    |g-g'|\leq o(1)|\sigma-\sigma'|, \qquad \eps\to 0.
  \end{equation}
  We use this to estimate at $(t,y^U)$, using the Lipschitz estimate~\eqref{EqMMapLipschitz}:
  \begin{align*}
    |\pi_S f_\sharp\sigma-\pi_S f_\sharp\sigma'| &\leq |\pi_S f\sigma g-\pi_S f\sigma' g|+|\pi_S f\sigma' g-\pi_S f\sigma' g'| \\
      &\leq \bigl(\|\bar N^s_p\bar f\|+\eps C_r+\wt C\rho(t+1)\bigr)|\sigma-\sigma'| + C_\Sigma\rho(t+1)\cdot o(1)|\sigma-\sigma'| \\
      &\leq \theta|\sigma-\sigma'|
  \end{align*}
  where we fix $\theta$ such that $\sup_{p\in\Gamma}\|\bar N^s_p\bar f\|<\theta<1$, and use $\eps=\eps_1$ with $\eps_1>0$ sufficiently small. By the contraction mapping principle, this implies the existence of an $f_\sharp$-invariant section
  \[
    \sigma_f \in \Sigma(\eps_1),\quad f_\sharp\sigma_f=\sigma_f.
  \]
  Note that $\sigma_f$ automatically takes values in $\cS'\oplus 0$ in view of the definition~\eqref{EqMMapTrivExt}, hence the image of $\sigma_f$,
  \[
    \Gamma^u := \sigma_f(\Gamma^u_0(\eps_1)),
  \]
  is a Lipschitz submanifold of $\cM$ which is $\rho$-close to $\Gamma^u_0$.

  The uniqueness claim~\eqref{ItMMapUUniq} is an immediate consequence of our construction and the contraction mapping principle.
  
  \textit{\underline{Step 4:} pointwise differentiability.} We next improve the Lipschitz regularity of $\Gamma^u$ to the pointwise existence of tangent planes. We again argue using Lipschitz jets. Thus, consider $\eps\in(0,\eps_1)$ such that $\Gamma^u_0(\eps_1)\supset\pi^U_0 f\sigma_f\Gamma^u_0(\eps)$, and equip $\Gamma^u_0(\eps_1)$ with the discrete topology. Define the fiber bundle $\cD^b\to\Gamma^u_0(\eps_1)$ whose fiber at $(t,x^U)$ is
  \begin{equation}
  \label{EqMMapJetBdl}
    \cD^b_{(t,x^U)} = \bigl\{ J_{(t,x^U)}\sigma \in J^b\bigl(\Gamma^u_0(\eps_1),(t,x^U);\cS,\sigma_f(t,x^U)\bigr) \colon \sigma\in\Sigma(\eps_1) \bigr\}.
  \end{equation}
  This is a subbundle of the vector bundle $J^b$ whose fiber over $(t,x^U)$ is equal to all of $J^b(\Gamma_0^u(\eps_1),(t,x^U);\cS,\sigma_f(t,x^U))$. The map $f$ induces a natural bundle map $J f\colon\cD^b|_{\Gamma^u_0(\eps)}\to\cD^b$ covering the map $(t,x^U)\mapsto (t-1,x^U_1)=\pi^U_0 f\sigma_f(t,x^U)$, defined by
  \[
    J f(J_{(t,x^U)}\sigma) := J_{(t-1,x^U_1)}(f_\sharp\sigma) = J_{(t-1,x^U_1)}(f\sigma g),
  \]
  where $g$ is the right inverse, defined near $(t-1,x^U_1)$, of $\pi^U_0 f\sigma$ constructed above. Note that since $\sigma$ is tangent to $\sigma_f$ at $(t,x^U)$, we have $g(t-1,x^U_1)=(t,x^U)$, so the Lipschitz jet is well-defined. The membership in $\cD^b_{(t-1,x^U_1)}$ follows from the estimates~\eqref{EqMMapBounded} and \eqref{EqMMapLipschitz}.

  We contend that $J f$ is a fiber contraction. Let $\sigma,\sigma'\in\Sigma(\eps_1)$ denote two local sections near $(t,x^U)$ with $\sigma(t,x^U)=\sigma'(t,x^U)=\sigma_f(t,x^U)$, and let $g,g'$ denote the local right inverses of $\pi^U_0 f\sigma$, $\pi^U_0 f\sigma'$, defined near $(t-1,x^U_1)$. Then in local coordinates as above, and with $\breve\sigma=\pi_S\sigma$, $\breve\sigma'=\pi_S\sigma'$, we estimate using~\eqref{EqMMapInvEst}:
  \begin{equation}
  \label{EqMMapLipContr}
  \begin{split}
    &L_{(t-1,x^U_1)}(\pi_S f\sigma g-\pi_S f\sigma' g') \\
    &\quad\leq L_{(t-1,x^U_1)}(\pi_S f\sigma g-\pi_S f\sigma' g) + L_{(t-1,x^U_1)}(\pi_S f\sigma' g-\pi_S f\sigma'g') \\
    &\quad\leq L_{\sigma_f(t,x^U)}(\pi_S f)\bigl(L_{(t,x^U)}(\breve\sigma-\breve\sigma')L_{(t-1,x^U_1)}(g) + L_{(t,x^U)}(\breve\sigma')L_{(t-1,x^U_1)}(g-g')\bigr) \\
    &\quad\leq (\|\bar N^s_{\bar f(x^U)}\bar f\|+o(1))\bigl(L_{(t,x^U)}(\breve\sigma-\breve\sigma')(\max(\ul\mu^{-1},1)+o(1)) + o(1)\cdot o(1)L_{(t,x^U)}(\breve\sigma-\breve\sigma')\bigr)
  \end{split}
  \end{equation}
  as $\eps\to 0$; the estimate on $L_{(t-1,x^U_1)}(g-g')$ follows similarly to the estimates~\eqref{EqMMapDiffG1}--\eqref{EqMMapDiffG2}. By the normal hyperbolicity of $\bar f$, we have $\|\bar N^s_{f(x^U)}\bar f\|<\min(1,\ul\mu)$, hence this proves our contention. Consider then the map $J_\sharp f$ on sections of $\cD^b|_{\Gamma^u_0(\eps)}$, defined by mapping a section $\{J_{(t,x^U)}\sigma^{(t,x^U)}\}$ to $(t,x^U)\mapsto J f(J_{g(t,x^U)}\sigma^{g(t,x^U)})$; this is the analogue of the map $f_\sharp$ above.\footnote{We could have worked with $J_\sharp f$ from the beginning of this step of the proof; however, the present arguments are more easily extended to the inductive argument for proving higher regularity below.} This is a contraction and hence has a unique fixed point, the section $\sigma_{J f}$ of $\cD^b|_{\Gamma^u_0(\eps)}$.

  Since $J f$ (meaning: its composition with restriction of sections of $\cD^b$ to $\Gamma^u_0(\eps)$) has the invariant bounded, but a priori discontinuous, section $J\sigma_f$, we have
  \[
    J\sigma_f = \sigma_{J f}.
  \]
  On the other hand, $J f$ maps \emph{differentiable} jets into differentiable jets since $f$ is differentiable, and $g$ is differentiable when $\sigma$ is. That is, $J f$ preserves the closed subbundle $\cD^d$, defined like~\eqref{EqMMapJetBdl} but using $J^d$ and differentiable sections $\sigma$; the unique bounded $J f$-invariant section thus necessarily lies in $J^d$, which proves the pointwise differentiability of $\sigma_f$. (Strictly speaking, this only holds over $\Gamma^u_0(\eps)$ rather than $\Gamma^u_0(\eps_1)$; but the image $\sigma_f(\Gamma^u_0(\eps_1))$ is obtained from $\sigma_f(\Gamma^u_0(\eps))$ by repeated application of $f$, hence the former inherits the regularity of the latter.)
  
  \textit{\underline{Step 5:} $\rho\cC_b^1$-regularity.} We improve the pointwise differentiability to continuous differentiability: we shall show that $\sigma_f\in\rho\cC_b^1(\Gamma^u_0;S)$, using arguments similar to those employed in the proof of Theorem~\ref{ThmMInv}. To wit, consider the bundle $L\to\Gamma^u_0$ (with the base equipped with its standard topology coming from $\cM$) with fibers equal to spaces of linear maps,
  \[
    L_{(t,x^U)} = \cL(T_{(t,x^U)}\Gamma^u_0, \cS_{(t,x^U)}).
  \]
  Consider then the smooth disc subbundle $\cB\to\Gamma^u_0(\eps)$ with fibers
  \[
    \cB_{(t,x^U)} = \{ P \in L_{(t,x^U)} \colon \|P\| \leq C_\Sigma\rho(t) \}.
  \]
  Since $\cS\to\Gamma^u_0$ is trivial, we can identify $T_{(t,x^U,s)}\cS=T_{(t,x^U)}\Gamma^u_0\oplus S$. Define then the bundle map $L f$ on $\cB$ as mapping $L_{(t,x^U)}\ni P\mapsto L f(P)\in L_{\pi^U_0 f\sigma_f(t,x^U)}$ such that
  \begin{equation}
  \label{EqMMapLf}
    {\rm graph}(L f(P)) = (D_{\sigma_f(t,x^U)}f)({\rm graph}(P)),
  \end{equation}
  where on the right,
  \[
    {\rm graph}(P) := \{ \xi + P\xi \colon \xi\in T_{(t,x^U)}\Gamma^u_0 \} \subset T_{(t,x^U,\breve\sigma_f(t,x^U))}\cS,
  \]
  similarly on the left.
  
  By the same estimates as in the previous step, $L f$ is a well-defined fiber contraction. One invariant section of $L f$ is given by the (a priori discontinuous) tangent bundle of $\Gamma^u$, i.e.\ the pointwise derivative $(t,x^U)\mapsto D_{(t,x^U)}\sigma_f$. On the other hand, $L f$ is continuous since $\sigma_f$ is, hence it preserves the space of continuous sections (which is $L^\infty$-closed in the space of sections of $\cB$, which are bounded by definition). Thus, $L f$ has a continuous invariant section, which is the unique bounded section, and therefore must agree with $D\sigma_f$. This gives
  \[
    \sigma_f \in \rho\cC_b^1.
  \]

  \textit{\underline{Step 6:} $\rho\cC_b^r$-regularity.} To prove higher regularity, we proceed inductively. Thus, assume that $\sigma_f\in\rho\cC_b^{r-1}$, $r\geq 2$. Consider again the map $L f$ in~\eqref{EqMMapLf}, acting on the $\CI$ fiber bundle $\cB\to\Gamma^u_0$. In view of assumption~\eqref{ItMPertClose},\footnote{The $\rho\cC_b^r$-regularity version is sufficient at this point.} the map $L f$ is $\rho\cC_b^{r-1}$-close to $L f_0$, where the latter is defined by~\eqref{EqMMapLf} with $f$ replaced by $f_0$, and $\sigma_{f_0}(t,x^U)=(t,x^U,0)$. Indeed, the image of $\sigma_{f_0}$ is the unstable manifold $\Gamma^u_0$ for $f_0$; note similarly that $D\sigma_{f_0}$ is the invariant section of $L f_0$, and is smooth and stationary, i.e.\ $t$-invariant. Furthermore, $L f_0$ is covered by the base map $\pi^U_0 f_0\sigma_{f_0}$, for whose inverse $g_{f_0}$ the Lipschitz constant of $x^U\mapsto\bar\pi^U g_{f_0}(t=0,x^U)$ is $m(\Gamma_p\bar f)^{-1}+o(1)$, $\eps\to 0$, in an $\eps$-neighborhood of $\bar f(p)$ by (a simplified version of)~\eqref{EqMMapInvEst}. On the other hand, $L_0 f$ contracts fibers by the factor $\sup_{p\in\Gamma}\|\bar N_p^s\bar f\|\max(m(\Gamma_p\bar f)^{-1},1)+o(1)$ by~\eqref{EqMMapLipContr}. In view of the $r$-normal hyperbolicity of $\bar f$, $L f$ therefore satisfies the $\rho\cC_b^{r-1}$-assumptions of Theorem~\ref{ThmMInv}. We conclude that $D\sigma_f\in\rho\cC_b^{r-1}$, hence $\sigma_f\in\rho\cC_b^r$, finishing the inductive step.

   This finishes the proof of claims~\eqref{ItMMapUInv}--\eqref{ItMMapUUniq} of the theorem.

  \textit{\textbf{\underline{Part 2:}} proof of claim~\eqref{ItMMapSExt} of the theorem.} This proceeds along somewhat similar lines, with the roles of $\Gamma^u_0$, $\bar N^s$, $f$ now being played by $\Gamma^s_0$, $\bar N^u$, $f^{-1}$. Note here that the inverse map $f^{-1}$ satisfies analogous conditions to $f$, namely, it is $\rho\cC_b^r$-close to $f_0^{-1}$ for all $r$, though now $\pi_T f^{-1}(t,x)=t+1$ steps forward in time.
  
  The main difference to the unstable manifold theorem is that there is no unique $f$-invariant spacetime extension $\Gamma^s$ of the stable manifold $\bar\Gamma^s$ at $\Gamma$. We construct one possible $\Gamma^s$ as follows. For $I\subset\R$, let $\Gamma_0^s(I):=\Gamma_0^s\cap t^{-1}(I)$; let further $t_0(\eps)$ denote the value of $t$ for which $\rho(t)=\eps$. For $\eps$ small, define the $\CI$ submanifolds
  \begin{align*}
    \Gamma^s(t_0(\eps),t_0(\eps)+\tfrac12)&:=\Gamma^s_0(t_0(\eps),t_0(\eps)+\tfrac12), \\
    \Gamma^s(t_0(\eps)+1,t_0(\eps)+\tfrac32)&:=f^{-1}\bigl(\Gamma^s(t_0(\eps),t_0(\eps)+\tfrac12)\bigr),
  \end{align*}
  which are graphs of smooth local sections (uniformly bounded in $\rho\cC_b^r$ for all $r$ as $\eps\to 0$) of the vector bundle $U$, which we define to be an extension of $\bar N^u$ from $\Gamma$ to $\bar\Gamma^s$ and then, by stationarity, to $\Gamma^s_0$; if necessary, we take the direct sum with another stationary vector bundle to obtain a trivial vector bundle, as done above for $\cS$. We shall henceforth assume that $U$ is trivial. Define $\Gamma^s_\bullet$ to be the graph of a local section $\sigma_\bullet$ of $U$ over $\Gamma^s_0((t_0(\eps),t_0(\eps)+\tfrac32))$ such that it is equal to $\Gamma^s(t_0(\eps),t_0(\eps)+\tfrac12)$ and $\Gamma^s(t_0(\eps)+1,t_0(\eps)+\tfrac32)$ over the respective slabs in $\Gamma^s_0$; we may arrange that $\sigma_\bullet$ has $\rho\cC_b^r$ bounds for all $r$, with the bound depending on the $\rho\cC_b^r$ bounds on the perturbation of $f_0$ in assumption~\eqref{ItMPertClose}, and that moreover the $\rho\cC_b^r$ norms of $\sigma_\bullet$ are uniformly bounded as $\eps\to 0$. See Figure~\ref{FigMMapSExt}.

  \begin{figure}[!ht]
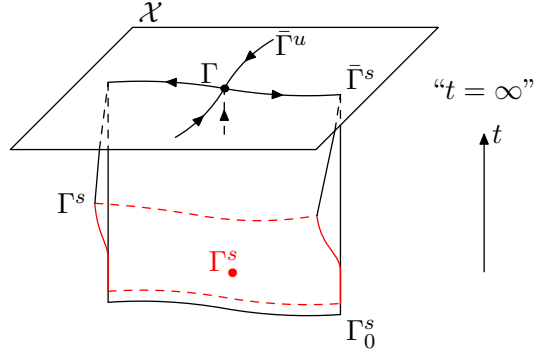

    \centering
    \inclfig{FigMMapSExt}
    \caption{Construction of a spacetime extension of the stable manifold: starting from a prescribed part $\Gamma^s_\bullet$, we iteratively apply $f^{-1}$ to obtain $\Gamma^s$. The arrows at ``$t=\infty$'' indicate the behavior of the stationary model $f_0^{-1}$.}
    \label{FigMMapSExt}
  \end{figure}

  We then define
  \[
    \Gamma^s := \bigcup_{n\in\N_0} f^{-n}\Gamma^s_\bullet,
  \]
  where $f^{-n}=f^{-1}\circ\ldots\circ f^{-1}$ ($n$-fold composition); by construction of $\Gamma^s_\bullet$, this is the graph of a smooth section $\sigma\colon\Gamma^s_0\to U$ defined in $t>t_0(\eps)$. Our goal is to show that $\sigma\in\rho\CI_b$. It is reasonable to expect this to be the case: the map $\bar f^{-1}$ expands in the direction of $\bar\Gamma^s$ and contracts in the fiber directions, thus `flattening out' any reasonable initial piece $\Gamma^s_\bullet$ to $\bar\Gamma^s$ as $t\to\infty$. To prove this rigorously, we shall describe $\Gamma^s$ more indirectly in a way analogous to our construction of $\Gamma^u$.
  
  Denote by $\bar\Gamma^s(\eps)$ an $\eps$-neighborhood of $\Gamma$ inside of $\cX$; with $T(\eps)$ as in~\eqref{EqMMapTint}, put then $\Gamma^s_0(\eps)=T(\eps)\times\bar\Gamma^s(\eps)$. Let also $U(\eps)$ denote the ball bundle of radius $\eps$ inside of $U\to\Gamma^s_0$. Write points on $\Gamma^s_0$ as $(t,x^S)$, $x^S\in\bar\Gamma^s$. Consider then the space of sections
  \begin{align*}
    \Sigma(\eps) &= \bigl\{ \text{sections}\ \sigma\colon\Gamma^s_0(\eps)\to U(\eps) \colon \\
      &\qquad\quad \sigma|_{\Gamma^s_0(\eps)\cap\Gamma_0^s\bigl((t_0(\eps),t_0(\eps)+\tfrac32)\bigr)}=\sigma_\bullet,\ |\sigma|\leq C_\Sigma\rho(t),\ L_{(t,x^S)}(\breve\sigma)\leq C_\Sigma\rho(t) \bigr\}
  \end{align*}
  which initially agree with $\Gamma^s_\bullet$ (we omit the $\eps$-dependence of the latter from the notation); we write $\breve\sigma=\pi_U\sigma$, where $\pi_U$ projects from $U$ onto the typical fiber of the trivial vector bundle $U$. Here, $C_\Sigma$ will be fixed later; it is in particular chosen to be larger than a constant $\wt C$, depending only on the perturbation of $f_0$, so that $\sigma_\bullet$ satisfies the $L^\infty$ and Lipschitz bounds with constant $\wt C$. When $\eps$ is sufficiently small, we can define a map $f^\sharp\colon\Sigma(\eps)\to\Sigma(\eps)$ by
  \[
    f^\sharp\sigma(t,x^S):=
      \begin{cases}
        \sigma_\bullet(t,x^S), & t_0(\eps)<t<t_0(\eps)+\tfrac32, \\
        f^{-1}\sigma g^-(t,x^S), & t>t_0(\eps)+1,
      \end{cases}
  \]
  where $g^-$ is a right inverse of $\pi^S_0 f^{-1}\sigma$, with $\pi^S_0\colon U\to\Gamma^s_0$ denoting projection to the base. Note that by definition of $\sigma_\bullet$, the two expressions on the right hand side agree for $t_0(\eps)+1<t<t_0(\eps)+\tfrac32$.
  
  One can now proceed as in the case of $\Gamma^u$ to infer that for small $\eps>0$, $f^\sharp$ is a well-defined contraction on $\Sigma(\eps)$, equipped with the complete $\cC^0$ metric. Indeed, up to additive $o(1)$ errors as $\eps\to 0$, the map $g^-$ has Lipschitz constants $\max(\|\Gamma_p\bar f\|,1)$, while $f^{-1}$ contracts the fibers of $U$ by no less than $m(\bar N^u_p f)^{-1}$. The product of these two quantities, which arises in estimates on $f^\sharp\sigma$ similarly to before, is less than unity by $1$-normal hyperbolicity. Moreover, the mapping properties of $f^\sharp$ rely on the fact that $\bar f$ preserves $\bar\Gamma^s$; see~\eqref{EqMMapLinfty} for the analogous estimate for $\Gamma^u$.
  
  Letting $\sigma^s$ denote the unique fixed point of $f^\sharp$, the graph of $\sigma^s$ is by definition equal to $\Gamma^s\cap U(\eps)$. We then proceed as before, improve the regularity of $\sigma^s$ to pointwise differentiability, then to $\rho\cC_b^1$, and then inductively, using Theorem~\ref{ThmMInv}, to $\sigma^s\in\rho\cC_b^r$ under $\rho\cC_b^r$-assumptions on the perturbation of $f_0$. This finishes the proof of part~\eqref{ItMMapSExt}, and thus of the theorem.
\end{proof}

This theorem remains true if we relax assumption~\eqref{ItMNormHyp} to eventual relative $r$-normal hyperbolicity as in \cite[Definition~3]{HirschPughShubInvariantManifolds}, that is: for some constants $0<\mu<1<\lambda<\infty$ and $C>1$, we have
\begin{equation}
\label{EqMMapEvRel}
  m(\bar N^u_p\bar f^n) \geq C^{-1}\lambda^n\|\Gamma_p\bar f^n\|^k, \quad
  \|\bar N^s_p\bar f^n\| \leq C\mu^n m(\Gamma_p\bar f^n)^k,\qquad \forall\,p\in\Gamma,\ 0\leq k\leq r,\ n\geq 0.
\end{equation}
Indeed, for all $n\geq N_0$, with $N_0$ sufficiently large, the map $\bar f^n$ is then immediately relatively $r$-normally hyperbolic, and $f^n$ satisfies the assumptions of Theorem~\ref{ThmMMap} upon rescaling time by a factor of $n$. Then, denoting by $\Gamma^u$ the corresponding unstable manifold, note that
\begin{equation}
\label{EqMMapEvRelInv}
  f^n(f(\Gamma^u))=f(f^n(\Gamma^u))=f(\Gamma^u),
\end{equation}
so $f(\Gamma^u)$ is another $\rho\cC_b^1$-perturbation of $\Gamma^u_0$ which is invariant under $f^n$ and approaches $f_0(\Gamma^u_0)=\Gamma^u_0$ as $t\to\infty$; by uniqueness, $\Gamma^u$ is therefore $f$-invariant as well. Part~\eqref{ItMMapSExt} of Theorem~\ref{ThmMMap} applies as well to $f^n$; to ensure that $\Gamma^s$ is $f$-invariant, we work with initial sections---which now need to have $t$-size at least $n$---which are $f$-invariant; in the notation of the above proof, we can take this to be $\bigcup_{j=0}^{n-1} f^{-j}\Gamma_\bullet^s$.

We end this section by noting that other notions of regularity carry over from the perturbation to the invariant manifolds. We only explicitly state one case of interest. Namely, working in $t\geq 1$, let $\cC_{b,\bop}^0\equiv\cC_b^0$, and define the space $\cC_{b,\bop}^r$ exactly like $\cC^r_b$ in~\eqref{EqMCBdd}, but in addition allowing any number of the $V_i$ to be equal to $t\pa_t$.\footnote{The reason for the notation is that the vector fields on $\cX$ together with $t\pa_t$ span the space of b-vector fields on the compactification~\eqref{EqICpt} of $\cM$ at future infinity.} We strengthen the assumption~\eqref{EqMWeightCI} correspondingly by requiring
\[
  |(t\pa_t)^k\rho|\leq C_k\rho\quad \forall\,k\in\N_0.
\]
This is satisfied for polynomial weights $\rho(t)=\la t\ra^{-\alpha}$, $\alpha>0$.

\begin{thm}
\label{ThmMMapb}
  Under the assumptions~\eqref{ItMGamma}--\eqref{ItMNormHyp} and \eqref{ItMPertTime}, and assuming in assumption~\eqref{ItMPertClose} that $\wt V\in\rho\cC_{b,\bop}^\infty$, the unstable manifold $\Gamma^u\subset\cM$ is, resp.\ the stable manifold $\Gamma^s$ can be chosen to be, the graph over $\Gamma^u_0$, resp.\ $\Gamma^s_0$, of a function in $\rho\cC_{b,\bop}^\infty$. This remains true upon replacing $\cC_{b,\bop}^\infty$ by $\cC_{b,\bop}^r$ for $r\in\N$ fixed.
\end{thm}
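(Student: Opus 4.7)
The strategy is to run through the proofs of Theorems \ref{ThmMInv} and \ref{ThmMMap} with the space $\cC_b^r$ systematically replaced by $\cC_{b,\bop}^r$. Two elementary stability properties enable this substitution. First, under the strengthened weight hypothesis $|(t\pa_t)^k\rho|\leq C_k\rho$, any bound by a constant multiple of $\rho$ (such as those appearing throughout Steps 1--3 of Theorem \ref{ThmMMap}) survives application of $t\pa_t$. Second, the space $\cC_{b,\bop}^r$ is preserved under the unit time shifts $T_{\pm 1}\colon(t,x)\mapsto(t\mp 1,x)$ for $t$ large; the pointwise identity
\[
  (t\pa_t)(u\circ T_{\pm 1}) = \tfrac{t}{t\mp 1}\bigl((s\pa_s)u\bigr)\circ T_{\pm 1}
\]
expresses $t\pa_t$ of the pullback as a smooth bounded multiple of the pullback of $s\pa_s u$, and iterating yields bounds for all b-derivatives. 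In particular, the stationary maps $f_0$, $f_0^{-1}$, as well as the perturbed maps $f$, $f^{-1}$, $g$ and $g^-$ arising throughout the proof, act continuously on $\rho\cC_{b,\bop}^r$.

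The first step is to upgrade Theorem \ref{ThmMInv} to its b-analogue: under the strengthened hypothesis $\wt f, \wt\ell\in\rho\cC_{b,\bop}^r$, the unique invariant section satisfies $\sigma_\ell-\sigma_0\in\rho\cC_{b,\bop}^r$. Given the $\rho\cC_b^r$-regularity already produced by Theorem \ref{ThmMInv}, it suffices to show closure of $\sigma_\ell-\sigma_0$ under any finite number of applications of $t\pa_t$. Differentiating the fixed point relation $\sigma_\ell=\ell_\sharp\sigma_\ell$ by $t\pa_t$ and invoking the commutation identity above yields a linear equation
\[
  (t\pa_t)\sigma_\ell = \Lambda_\ell\bigl((t\pa_t)\sigma_\ell\bigr) + R,
\]
where $\Lambda_\ell$ acts as pullback by $g_\ell$ followed by multiplication by $(D\breve\ell|_{\cE})\cdot\tfrac{t}{t+1}$, and $R\in\rho\cC_b^{r-1}$ is an inhomogeneous term constructed from $\rho\cC_{b,\bop}^r$-derivatives of $\wt\ell$, $\wt f$ and the already-controlled $\rho\cC_b^r$-derivatives of $\sigma_\ell$. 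By~\eqref{EqMInvContr} the fiber contraction factor $k_x$ of $\Lambda_\ell$ is uniformly less than $1$, so the contraction mapping argument of Step 2 in Theorem \ref{ThmMInv} produces a unique bounded $\rho\cC_b^0$ solution, which by uniqueness equals $(t\pa_t)\sigma_\ell$. Iterating in the power of $t\pa_t$ and applying the $\rho\cC_b^r$-bound at each stage gives the b-version.

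With this b-version in hand, Theorem \ref{ThmMMapb} follows by rerunning the proof of Theorem \ref{ThmMMap}. Steps 1--4 of that proof rely only on $\rho\cC_b^0$ and $\rho\cC_b^1$ estimates and extend unchanged. In Step 5, the bundle map $Lf$ now has $\rho\cC_{b,\bop}^0$ coefficients, and the invariant section argument is carried out in the subbundle of $\rho\cC_{b,\bop}^0$-sections of $\cB$ (enlarged to track one $t\pa_t$ derivative), giving $D\sigma_f\in\rho\cC_{b,\bop}^0$ and hence $\sigma_f\in\rho\cC_{b,\bop}^1$. The inductive Step 6 applies the b-version of Theorem \ref{ThmMInv} to the linearization $Lf$, which is a $\rho\cC_{b,\bop}^{r-1}$-perturbation of the stationary model $Lf_0$, yielding $D\sigma_f\in\rho\cC_{b,\bop}^{r-1}$ and hence $\sigma_f\in\rho\cC_{b,\bop}^r$. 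For the stable manifold in Part 2, the initial piece $\Gamma^s_\bullet$ can be chosen with uniform $\rho\cC_{b,\bop}^r$-bounds (being the graph of a $\rho\cC_{b,\bop}^\infty$-perturbation of $\Gamma^s_0$ in a finite time slab), and the same upgraded contraction argument applies.

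The main technical point is to verify in the second step that the linearized equation for $(t\pa_t)\sigma_\ell$ remains contractive despite the correction factors $t/(t\mp 1)$ introduced by commuting $t\pa_t$ past the time shift; these factors are of the form $1+O(t^{-1})$ and therefore do not perturb the strict fiber contraction constant $k_x<1$ (nor the base contraction $\alpha_x$ in the higher-regularity inductive steps) by more than $o(1)$ as $t_0\to\infty$. Once this is in place, the entire scheme is a direct transcription of the arguments in \S\ref{SsInv} and the preceding proof.
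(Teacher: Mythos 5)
Your strategy is genuinely different from the paper's. The paper proves Theorem~\ref{ThmMMapb} by a one-paragraph coordinate change: introducing $\tilde t = \log t$, so that $\pa_{\tilde t} = t\pa_t$, transforms $\cC_{b,\bop}^r$ in $t$ into $\cC_b^r$ in $\tilde t$, and transforms the hypothesis $|(t\pa_t)^k\rho|\leq C_k\rho$ into the original~\eqref{EqMWeightCI}. The time map $t\mapsto t-1$ becomes $\tilde t\mapsto\tilde t-\delta(\tilde t)$ with $\delta(\tilde t)=\log(1+(e^{\tilde t}-1)^{-1})$, which is no longer a unit shift; the paper's key observation is that the proof of Theorem~\ref{ThmMMap} never uses the exact form $\tilde t\mapsto\tilde t-1$, only that the time component of $f$ and $f_0$ agree, and that the time shift satisfies $(\tilde t-\delta(\tilde t))-(\tilde t'-\delta(\tilde t'))\geq(1-o(1))(\tilde t-\tilde t')$, which is exactly what is needed for~\eqref{EqMMapInvEst}. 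This reuses the existing proof essentially verbatim. Your approach instead stays in the $t$-variable and differentiates the fixed-point relations by $t\pa_t$, constructing a b-version of Theorem~\ref{ThmMInv} and feeding it back into Steps~5--6 of Theorem~\ref{ThmMMap}. This works but is much more laborious: you must track the correction factors $t/(t\pm 1)$ through the chain rule, and every step requires checking that a new contraction argument on an enlarged bundle closes. What the paper's change of variables buys is that no new contraction argument is needed at all.

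One point in your argument needs repair. When you say the unique $\rho\cC_b^0$-solution of the linearized equation ``by uniqueness equals $(t\pa_t)\sigma_\ell$,'' this is circular as written: the contraction gives uniqueness only within the class of $\rho$-bounded sections, but a priori all you know about $(t\pa_t)\sigma_\ell$ from the already-established $\rho\cC_b^r$-regularity is that it is bounded by $Ct\rho(t)$, which grows in $t$. To close the identification you must either (a) observe that the iteration $\Lambda_\ell^N$ kills any polynomial-in-$t$ growth because the fiber contraction factor $(\sup_x k_x)^N$ decays geometrically, so uniqueness in fact holds in the larger class of polynomially bounded sections; or (b) show that $(t\pa_t)$ applied to the approximating sequence $\sigma_n=\ell_\sharp^n\sigma_0$ converges in $\rho\cC_b^0$ to the fixed point $\tau$ of $\Lambda_\ell(\cdot)+R$, so that $(t\pa_t)\sigma_\ell=\tau$ by uniform convergence of derivatives. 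Either fix is standard, but the ``by uniqueness'' step as stated does not stand on its own. The rest of your argument, while sketchy on the enlarged-bundle bookkeeping in Steps~5--6, is in the right spirit and could be filled in along the lines you indicate.
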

\begin{proof}
  Introducing the new time coordinate $\tilde t:=\log t$, we have $\pa_{\tilde t}=t\pa_t$, and we need to prove $\rho\CI_b$-regularity of the (un)stable manifold over $\R_{\tilde t}\times\cX$. But this follows by a minor adaptation of the proof of Theorem~\ref{ThmMMap}. Indeed, the only fact about the $t$-behavior of $f$ and $f_0$ used in the proof is that $\pi_T f(\tilde t,x)=\tilde t-\delta(\tilde t)=\pi_T f_0(\tilde t,x)$ change $\tilde t$ by the same amount for every fixed $\tilde t$; for the Lipschitz estimate~\eqref{EqMMapInvEst}, it suffices to assume $(\tilde t-\delta(\tilde t))-(\tilde t'-\delta(\tilde t'))\geq(1-o(1))(\tilde t-\tilde t')$ as $\tilde t,\tilde t'\to\infty$. Note that $t\mapsto t-1$ means $\tilde t\mapsto\log(e^{\tilde t}-1)=\tilde t-\delta(\tilde t)$ with $\delta(\tilde t)=\log(1+\tfrac{1}{e^{\tilde t}-1})$, which fits this assumption.
\end{proof}

\subsection{Stable/unstable manifold theorem for flows}
\label{SsMF}

For the analogous theorem for flows on the closed manifold $\cX$ and the spacetime $\cM=\R_t\times\cX$, we make the following assumptions:

\begin{enumerate}[label=(I$_{\rm F}$.\arabic*),ref=I$_{\rm F}$.\arabic*]
\item\label{ItMFMfd} $\Gamma$ is a closed $\CI$ submanifold of $\cX$;
\item\label{ItMFVect} $\bar V\in\cV(\cX)$ is a smooth vector field tangent to $\Gamma$;
\item\label{ItMFTimeOne} for all $r\in\N$, the time one flow $\bar f:=e^{\bar V}$ is immediately relatively $r$-normally hyperbolic at $\Gamma$, see~\eqref{ItMSplit}--\eqref{ItMNormHyp}, or in fact just eventually relatively $r$-normally hyperbolic in the sense of~\eqref{EqMMapEvRel}.
\end{enumerate}

Denote by $V_0:=-\pa_t+\bar V\in\cV(\cM)$ the spacetime extension which moves at speed $1$ in the $t$-direction. We consider the following class of perturbations $V\in\cV(\cM)$:
\begin{enumerate}[label=(II$_{\rm F}$.\arabic*),ref=II$_{\rm F}$.\arabic*]
\item\label{ItMFTime} we have $V t=V_0 t=-1$;
\item\label{ItMFPert} $V$ is a $\rho\cC_b^r$-perturbation of $V_0$ for every $r$. That is,
  \[
    \wt V := V-V_0 \in \rho\CI_b(\cM;T\cX).
  \]
\end{enumerate}

\begin{thm}
\label{ThmMFlow}
  Under these assumptions, and using the notation used in the statement of Theorem~\usref{ThmMMap}, in particular~\eqref{EqMMapSpExt}, there exists a submanifold $\Gamma^u\subset M$ such that:
  \begin{enumerate}
  \item\label{ItMFlowTgt} $V$ is tangent to $\Gamma^u$;
  \item $\Gamma^u$ approaches $\Gamma^u_0$ as $t\to\infty$ in a $\rho\CI_b$ sense;
  \item $\Gamma^u$ is the unique manifold satisfying~\eqref{ItMFlowTgt} within the class of manifolds approaching $\Gamma^u_0$ in a $\rho\cC_b^1$ sense.
  \ctrsave
  \end{enumerate}
  Furthermore:
  \begin{enumerate}
  \ctrset
  \item there exists a (non-unique) manifold $\Gamma^s\subset\cM$ approaching $\Gamma^s_0$ as $t\to\infty$ in a $\rho\CI_b$ sense and such that $V$ is tangent to $\Gamma^s$.
  \end{enumerate}
  In these statements, the regularity of $\Gamma^u$ and $\Gamma^s$ is $\rho\cC_b^r$ if we relax assumption~\eqref{ItMFPert} by only assuming $\rho\cC_b^r$-regularity of $\wt V$. Moreover, if $\wt V\in\rho\cC_{b,\bop}^\infty$, then $\Gamma^u$ is, resp.\ $\Gamma^s$ can be chosen to be, a $\rho\cC_{b,\bop}^\infty$-graph over $\Gamma^u_0$, resp.\ $\Gamma^s_0$. This remains true upon replacing $\cC_{b,\bop}^\infty$ by $\cC_{b,\bop}^r$ for $r\in\N$ fixed.
\end{thm}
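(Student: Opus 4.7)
The plan is to reduce the flow statement to the map-level results Theorem~\ref{ThmMMap} and Theorem~\ref{ThmMMapb} applied to the time-one map $f:=\Phi^V_1$ of the flow of $V$, where $\Phi^V_s$ denotes the time-$s$ flow. Since $V_0$ is time-translation-invariant with $V_0 t=-1$, its time-one flow is exactly $f_0(t,x)=(t-1,\bar f(x))$ with $\bar f=e^{\bar V}$; hypotheses~\eqref{ItMFMfd}--\eqref{ItMFTimeOne} immediately give~\eqref{ItMGamma}--\eqref{ItMNormHyp}, and the condition $V t=V_0 t=-1$ yields $\pi_T f(t,x)=t-1$ as in~\eqref{ItMPertTime}. (When $\bar f$ is only eventually relatively $r$-normally hyperbolic, we first pass to $f^n$ for $n$ large, as in the remark around~\eqref{EqMMapEvRelInv}.) The one nontrivial check is assumption~\eqref{ItMPertClose}, namely that $f-f_0\in\rho\cC_b^r$: via a Duhamel expansion of $V=V_0+\wt V$ against the translation flow $f_0$, the difference $f-f_0$ is, to leading order, the integral of $\wt V$ along an integral curve of $V_0$ over a unit time interval, and since $\wt V\in\rho\CI_b$ and~\eqref{EqMWeightCI} bounds every $\rho$-derivative by $\rho$ itself, such an integral preserves the $\rho\cC_b^r$ class for each $r$. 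The b-variant required for Theorem~\ref{ThmMMapb} is handled identically, most transparently after the change of variables $\tilde t=\log t$ used in the proof of that theorem. Applying Theorem~\ref{ThmMMap} (or Theorem~\ref{ThmMMapb}) to $f$ then produces $f$-invariant manifolds $\Gamma^u$ and $\Gamma^s$ with the claimed $\rho\cC_b^r$, $\rho\CI_b$, $\rho\cC_{b,\bop}^r$, or $\rho\cC_{b,\bop}^\infty$ regularity.

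To upgrade $f$-invariance to $V$-tangency for $\Gamma^u$ I invoke uniqueness: $\Phi^V_s$ commutes with $f=\Phi^V_1$, so $\Phi^V_s(\Gamma^u)$ is $f$-invariant; since $\Phi^V_s$ is a $\rho\cC_b^\infty$-perturbation of $\Phi^{V_0}_s$, which preserves $\Gamma^u_0$ exactly (because $V_0$ is tangent to $\Gamma^u_0$), the image $\Phi^V_s(\Gamma^u)$ is still $\rho\cC_b^1$-close to $\Gamma^u_0$. The uniqueness assertion~\eqref{ItMMapUUniq} then forces $\Phi^V_s(\Gamma^u)=\Gamma^u$ on the overlap, and differentiating at $s=0$ yields $V\in T\Gamma^u$. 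For $\Gamma^s$ uniqueness is unavailable, so $V$-tangency must be installed by construction: in the iteration scheme in the proof of Theorem~\ref{ThmMMap}\eqref{ItMMapSExt}, I choose the initial piece $\Gamma^s_\bullet$ to be $V$-saturated. Concretely, for $t_1$ large pick a $\CI$ disc $D\subset\{t=t_1\}$ which is the graph of a small section of $U|_{\bar\Gamma^s(\eps)\times\{t_1\}}$ (where $U$ is the bundle introduced in the proof of~\eqref{ItMMapSExt}), and set
\[
  \Gamma^s_\bullet:=\bigcup_{s\in[0,3/2]}\Phi^V_{-s}(D),\qquad \Gamma^s:=\bigcup_{n\geq 0}f^{-n}(\Gamma^s_\bullet)=\bigcup_{s\geq 0}\Phi^V_{-s}(D).
\]
Because $V t=-1\neq 0$, the flow is transverse to $t$-level sets, so both sets are smooth submanifolds tangent to $V$; $\Gamma^s$ is $V$-invariant by construction; and the regularity argument of Theorem~\ref{ThmMMap}\eqref{ItMMapSExt} (or Theorem~\ref{ThmMMapb}) applies verbatim to show that $\Gamma^s$ is a $\rho\CI_b$- (respectively $\rho\cC_{b,\bop}^\infty$-, or $\rho\cC_{b,\bop}^r$-) graph over $\Gamma^s_0$.

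The main obstacle is the perturbation estimate for $f$ in the first paragraph: obtaining $f-f_0\in\rho\cC_b^r$ (resp.\ $\rho\cC_{b,\bop}^r$) with constants uniform as $t\to\infty$ and at every order of derivative requires careful bookkeeping of how the weight $\rho$ and (in the b-case) the vector field $t\pa_t$ interact with Gronwall estimates along the flow. Conceptually this is standard ODE perturbation theory with decaying coefficients, and~\eqref{EqMWeightCI} is precisely the hypothesis that makes the derivative count close; every subsequent step is either a direct quotation from the map theorems or a short uniqueness/construction argument.
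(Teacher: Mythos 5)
Your proposal follows the paper's proof exactly: reduce to the map case via the time-one flow $f=\Phi^V_1$, use the uniqueness clause of Theorem~\ref{ThmMMap}\eqref{ItMMapUUniq} together with the fact that $\Phi^V_s$ commutes with $f$ to upgrade $f$-invariance of $\Gamma^u$ to $V$-tangency, and for $\Gamma^s$ choose the seed $\Gamma^s_\bullet$ to be a $V$-flowout so that the resulting manifold is automatically $V$-invariant. The only minor discrepancy is the length of the seed interval: in the eventually hyperbolic case one applies the map theorem to $f^n$ with $n$ large, so the flowout must have $t$-extent at least $n$ (the paper's $T$ "depending on the constants in the eventual hyperbolicity"), not the fixed $[0,3/2]$ you wrote, though your earlier remark about passing to $f^n$ shows you are aware of this.
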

\begin{proof}
  The time one flow, denoted $f$, of $V$ satisfies the assumptions of Theorem~\ref{ThmMMap} (in the relaxed form using eventual hyperbolicity as around~\eqref{EqMMapEvRel} under the assumption on eventual relative normal hyperbolicity) and, for the final part, those of Theorem~\ref{ThmMMapb}. An argument analogous to~\eqref{EqMMapEvRelInv} shows that the time $t$ flow $e^{t V}$ preserves $\Gamma^u$ for all $t$, in particular as $t\to 0$, so $V$ is tangent to $\Gamma^u$. For the construction of $\Gamma^s$, and using the notation of the proof of Theorem~\ref{ThmMMap}, we need to define $\Gamma^s_\bullet$ slightly more carefully; for instance, we can take
  \[
    \Gamma^s_\bullet = \bigcup_{t=0}^T e^{-t V}\Gamma_0^s(\{t_0(\eps)\})
  \]
  for large and fixed $T$ depending on the constants in the eventual hyperbolicity of $f$.
\end{proof}

\section{Microlocal estimates at normally hyperbolic trapping}
\label{SE}

Our microlocal estimates for PDEs with Hamilton flow having a normally hyperbolic trapped set require certain non-degeneracy assumptions which go beyond the purely dynamical results of \S\ref{SM}. Thus, for clarity and simplicity, rather than indirectly controlling the (un)stable manifold via dynamical assumptions, we state the assumptions on them as well as the non-degeneracy requirements directly. This section can therefore be read independently of the previous one.

We first sketch the proof of microlocal estimates in the technically simpler setting of closed manifolds in~\S\ref{SsES}. A detailed proof in the setting of asymptotically stationary spacetimes, with microlocal analysis taking place at future infinity, is given in~\S\ref{SsEC}.

\subsection{Estimates on closed manifolds}
\label{SsES}

We first consider microlocal estimates at normally hyperbolic trapping on closed manifolds without boundary and for the simplest class of operators, in order to present the main ideas without the technical complications caused by microlocal analysis on manifolds with boundary.

Thus, let $X$ denote a closed manifold with a fixed volume density, and let $P\in\Psi^m(X)$ be a self-adjoint classical pseudodifferential operator of order $m\in\R$, with (real) principal symbol $p$. The Hamilton vector field $H_p\in\cV(T^*X)$ is homogeneous of degree $m-1$; if $\wh\rho\in\CI(T^*X\setminus o)$ denotes a positive function which is homogeneous of degree $-1$, the rescaled vector field
\[
  \sfH_p := \wh\rho^{m-1}H_p \in \cV(S^*X)
\]
induces a vector field on the cosphere bundle $S^*X=(T^*X\setminus o)/\R_+$. We will occasionally identify subsets of $S^*X$ with their conic extensions into $T^*X\setminus o$. Let
\[
  \Sigma := p^{-1}(0)\setminus o \subset T^*X\setminus o
\]
denote the characteristic set of $P$. Suppose then that $\sfH_p$ is tangent to a closed $\CI$ submanifold $\Gamma\subset\Sigma$, and $d p\neq 0$ in a neighborhood of $\Gamma$, so $\Sigma$ has codimension $1$ there. Assume that $\wh\rho$ is such that
\begin{equation}
\label{EqESReg}
  H_p\wh\rho \equiv 0 \ \ \text{near}\ \ \Gamma;
\end{equation}
this will allow us to shift the regularity in our estimate at will.\footnote{This can always be arranged \emph{locally} along the $H_p$ flow; here however we make this assumption in a full neighborhood of $\Gamma$.} Suppose there are local orientable manifolds $\Gamma^{u/s}\subset\Sigma$ near $\Gamma$ to which $\sfH_p$ is tangent; we assume they are of class $\CI$, orientable, and non-trivial, i.e.\ $\Gamma^{u/s}\supsetneq\Gamma$, and with $\codim_\Sigma\Gamma^{u/s}=1$. Let $\phi^{u/s}\in\CI(S^*X)$ be such that $\phi^{u/s}$ are defining functions for $\Gamma^{u/s}$ within $\Sigma\cap S^*X$;\footnote{The case of higher codimension can easily be treated by using vector-valued $\phi^{u/s}$ here and in the proof below.} we extend them to functions on $T^*X\setminus o$ by homogeneity of degree $0$. We make the hyperbolicity and non-degeneracy assumptions
\begin{gather}
\label{EqESUS}
  \sfH_p\phi^u=-w^u\phi^u,\quad \sfH_p\phi^s=w^s\phi^s,\quad w^{u/s}>0, \\\
\label{EqESPoisson}
  \wh\rho^{-1}H_{\phi^u}\phi^s = \wh\rho^{-1}\{\phi^u,\phi^s\} > 0,
\end{gather}
on $\Gamma$.\footnote{Condition~\eqref{EqESPoisson} is independent of the choice of defining functions, up to an overall sign. It holds in the following important situation: $\Gamma$ can be extended off $\Sigma$ to a conic symplectic submanifold of $T^*X$ (of codimension $2$), see also \cite[\S5.1]{DyatlovResonanceProjectors}. In condition~\eqref{EqESUS}, $w^{u/s}$ can be controlled precisely for carefully chosen $\phi^{u/s}$ if the flow of $\sfH_p$ is absolutely $1$-normally hyperbolic, see \cite[Lemma~5.1]{DyatlovResonanceProjectors} and also \cite[Theorem~2.2]{HirschPughShubInvariantManifolds}.} Note that $H_{\phi^u}\phi^u=0$ and $H_{\phi^u}p=-H_p\phi^u=0$ on $\Gamma^u$; arguing similarly for $H_{\phi^s}$ shows that $H_{\phi^{u/s}}|_{\Gamma^{u/s}}\in T\Gamma^{u/s}$. Assumption~\eqref{EqESPoisson} on the other hand ensures that $H_{\phi^{u/s}}$ is transversal to $\Gamma^{s/u}$ within $\Sigma$.

\begin{thm}
\label{ThmES}
  There exist operators $B_0,B_1,G\in\Psi^0(X)$, with $\WF'(B_0)$, $\WF'(B_1)$, $\WF'(G)$ contained in any fixed neighborhood of $\Gamma$, such that $B_0$ is elliptic at $\Gamma$, while $\WF'(B_1)\cap\Gamma^u=\emptyset$, and $G$ is elliptic near $\Gamma$, such that for $s,N\in\R$ and some $C>0$,
  \begin{equation}
  \label{EqESEst}
    \|B_0 v\|_{H^s} \leq C\bigl(\|B_1 v\|_{H^{s+1}} + \|G P v\|_{H^{s-m+2}} + \|v\|_{H^{-N}}\bigr).
  \end{equation}
  This holds in the strong sense that if all norms on the right are finite, then so is the norm on the left, and the inequality holds.
\end{thm}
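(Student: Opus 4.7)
The proof of Theorem~\ref{ThmES} is a positive commutator argument microlocalized in a small conic neighborhood of $\Gamma$. The central difficulty is that $\sfH_p$ is tangent to $\Gamma$, so the principal symbol of any naive commutator vanishes there and the standard argument only produces control on an \emph{annular} neighborhood of $\Gamma$. Passing from this annular bound down to elliptic control at $\Gamma$ itself is the main obstacle, and precisely accounts for the loss of one derivative in~\eqref{EqESEst}.

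First I would take $A = \Op(a)$ with
\[
  a = \wh\rho^{-s+(m-1)/2}\chi(\phi^u)\chi_0(\phi^s)\chi_\Gamma,
\]
for nonnegative even bumps $\chi,\chi_0 \in \CIc(\R)$ supported in $[-2\delta,2\delta]$ with $\chi=\chi_0=1$ on $[-\delta,\delta]$, and $\chi_\Gamma$ a conic cutoff to a small neighborhood of $\Gamma$. By \eqref{EqESReg} the weight commutes with $H_p$ near $\Gamma$, so the principal symbol of $\tfrac{1}{2i}[P,A^*A]$ equals $\wh\rho^{-2s+m-1}\,a_0 H_p a_0$ where $a_0=\chi(\phi^u)\chi_0(\phi^s)\chi_\Gamma$. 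Using \eqref{EqESUS} and the evenness of the bumps, this splits as a non-negative piece supported on the annulus $\{|\phi^u|\in[\delta,2\delta]\}$ plus a non-positive piece on $\{|\phi^s|\in[\delta,2\delta]\}$, up to errors supported where $H_p\chi_\Gamma\neq 0$. Exchanging the roles of $\chi$ and $\chi_0$ if needed so that the negative annulus sits near $\Gamma^s\setminus\Gamma$---compatibly with $\WF'(B_1)\cap\Gamma^u=\emptyset$---sharp G\aa{}rding's inequality turns the negative term into $-C\|B_1 v\|_{H^{s+1}}^2$ modulo lower-order errors.

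The crucial second step is to upgrade the annular control produced by the positive term into elliptic control at $\Gamma$. Here I would exploit the damping structure at $\Gamma^u$ signalled in the introduction: with $\Phi^u=\Op(\phi^u)$, the identity $[P,\Phi^u] = B\Phi^u + R$, where $\sigma(B) = iw^u$ by \eqref{EqESUS}, yields $(P-B)(\Phi^u v) = \Phi^u f + R v$. Since $P$ is self-adjoint while $B$ is principally anti-self-adjoint, $P-B$ carries a strictly negative imaginary subprincipal symbol $-w^u$ on $\Gamma$, acting as effective damping on $\Phi^u v$ at the trapped set. Quantitative real-principal-type propagation for $(P-B)(\Phi^u v)$, combined with the step-one annular bound and the Poisson-bracket condition \eqref{EqESPoisson} (which guarantees that $H_{\phi^u}$ is transverse to $\Gamma^s$ within $\Sigma$, so that the damping is non-degenerate across $\Gamma$), then converts the annular control at distance $\delta$ into genuine elliptic control in a fixed neighborhood of $\Gamma$. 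The single factor of $\Phi^u$, which vanishes to first order at $\Gamma^u$, is exactly what produces the one-derivative loss.

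Finally, the commutator identity $\langle i[P,A^*A]v, v\rangle = 2\operatorname{Im}\langle Av, APv\rangle$, Young's inequality estimating the right-hand side by $\epsilon^{-1}\|GPv\|_{H^{s-m+2}}^2 + \epsilon\|Av\|_{H^{-1}}^2$ using the ellipticity of $G$ on $\WF'(A)$, absorption of the latter into $\|B_0 v\|_{H^s}^2$ via the one-order gap, and a standard regularization of $A$ by $\Lambda_\eta A$ with $\Lambda_\eta\in\Psi^{-\infty}$ bounded uniformly in $\Psi^0$ (to handle rough $v\in H^{-N}$) together assemble \eqref{EqESEst}. I expect the hard part to be the second paragraph: translating the symbolic damping for $P-B$ into quantitative operator estimates that genuinely recover elliptic control at $\Gamma$ (not just on an annulus), while keeping precise track of the derivative loss, is where the bulk of the technical work lies.
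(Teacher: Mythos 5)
Your high-level inventory of ingredients is right --- the damped equation $(P-iW^u)\Phi^u v = \Phi^u Pv + Rv$ with negative imaginary subprincipal symbol $-w^u$, the transversality from~\eqref{EqESPoisson}, the single factor of $\Phi^u$ as the source of the one-derivative loss --- but the argument you describe does not assemble into a proof, for two concrete reasons. First, your step one is not the paper's starting point and is in fact unusable as stated. With even bumps $\chi,\chi_0$, the symbol $a_0 H_p a_0$ splits into a piece $\geq 0$ on $\{|\phi^u|\in[\delta,2\delta]\}$ (since $\sfH_p\phi^u=-w^u\phi^u$ and $-\phi^u\chi'(\phi^u)\geq 0$) and a piece $\leq 0$ on $\{|\phi^s|\in[\delta,2\delta]\}$ (since $\sfH_p\phi^s=w^s\phi^s$ and $\phi^s\chi_0'(\phi^s)\leq 0$). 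This sign pattern is forced and is not altered by ``exchanging $\chi$ and $\chi_0$''. The negative annulus $\{|\phi^s|\in[\delta,2\delta],\ |\phi^u|\lesssim\delta\}$ is precisely the region where the commutator argument demands a priori control, and that region meets $\Gamma^u=\{\phi^u=0\}$ --- so it cannot be covered by $B_1$ with $\WF'(B_1)\cap\Gamma^u=\emptyset$. The paper's actual first step is a positive commutator for $\Phi^u v$ itself (using a commutant of the schematic form $\wh\rho^{-s-1+(m-1)/2}\chi^u((\phi^u)^2)\chi^s((\phi^s)^2)$), where the \emph{damping symbol} $-w^u$ supplies the definite sign at $\Gamma$; this gives $\|\Phi^u v\|_{H^{s+1}}$ control on a \emph{fixed} neighborhood of $\Gamma$, not on an annulus.

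Second, your ``crucial second step'' is where the theorem is actually proved, and as written it is a gap rather than a sketch. ``Quantitative real-principal-type propagation for $(P-B)(\Phi^u v)$'' propagates along $H_p$, which is tangent to $\Gamma$, so no finite-time propagation along $H_p$ carries annular information into $\Gamma$. The two missing ideas are: (i) view $\Phi^u v=v'$ itself as a real-principal-type equation and propagate along $H_{\phi^u}$, which by~\eqref{EqESPoisson} crosses $\Gamma^s$ transversally inside $\Sigma$; the short propagation length $\sim\delta$ produces the factor $\delta^{1/2}$, giving $\|v\|_{H^s([-2\delta,2\delta])}\lesssim \delta^{1/2}\bigl(\|v\|_{H^s(\pm[2\delta,\eps_0])}+\|\Phi^u v\|_{H^{s+1}}\bigr)+\cdots$; and (ii) propagate along $H_p$ within $\Gamma^u$ for time $\sim\log\delta^{-1}$ (the time needed to escape from $\phi^s\sim\delta$ to $\phi^s\sim\eps_0$ under the expansive dynamics on $\Gamma^u$) to bound $\|v\|_{H^s(\pm[2\delta,2\eps_0])}$ by $(\log\delta^{-1})^{1/2}\|v\|_{H^s(\pm[\delta,2\delta])}$ plus forcing. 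The proof closes only because $\delta^{1/2}(\log\delta^{-1})^{1/2}\to 0$ as $\delta\to 0$, allowing the near-$\Gamma$ term on the right of (i) to be absorbed into the left. Without this quantitative $\delta^{1/2}$ vs.\ $\log\delta^{-1}$ interplay the annular control never reaches $\Gamma$, which is precisely the degeneracy you correctly flagged but did not resolve.
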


The geometric control condition for~\eqref{EqESEst} to hold is that $\WF'(B_0)\subset\Ell(G)$, and all backward null-bicharacteristics from $\WF'(B_0)$ either reach $\Ell(B_1)$ in finite time or tend to $\Gamma$, all while remaining in $\Ell(G)$. See Figure~\ref{FigIEst}.

\begin{rmk}
\label{RmkESExt}
  It suffices to assume that $P$ has real scalar principal symbol, as long as the imaginary part $\wh\rho^{m-1}\sigma(\frac{1}{2 i}(P-P^*))$ of the subprincipal symbol is not too large relative to $w^s$ and $w^u$, see Remark~\ref{RmkESSkew}. One can also let $P$ be a principally scalar operator acting on sections of a vector bundle. We shall consider these more general cases in \S\ref{SsEC} as they are crucial for applications in relativity.
\end{rmk}

\begin{rmk}
\label{RmkESSaddle}
  Theorem~\ref{ThmES} is different from (and in a certain sense more degenerate than) the saddle point radial estimates considered in~\cite[\S2]{VasyMicroKerrdS}, see also \cite[\S{E.5.2}]{DyatlovZworskiBook}. Indeed, in the simplest situation where $\Gamma\subset T^*X\setminus o$ is a single ray, assumption~\eqref{EqESReg} above implies that the Hamilton vector field $H_p$ \emph{vanishes} at $\Gamma\subset T^*X\setminus o$, hence weights in the fiber such as $\wh\rho^{-2 s+m-1}$ cannot give positivity in positive commutator estimates. On the other hand, if in this situation $H_p$ were non-trivially radial at $\Gamma$, i.e.\ $\wh\rho^{-1}\sfH_p\wh\rho\neq 0$ at $\Gamma$, one would have the estimate~\eqref{EqESEst} with the $H^s$, resp.\ $H^{s-m+1}$ norm on $B_1 v$, resp.\ $G P v$ on the right for $s$ below or above (depending on the sign of $\wh\rho^{-1}\sfH_p\wh\rho$) a certain threshold, by considering commutants of the form $\chi(\phi^u)\chi(\phi^s)\wh\rho^{-2 s+m-1}$.
\end{rmk}

\begin{rmk}
\label{RmkESDefect}
  For $v\in\CI(X)$, the estimate~\eqref{EqESEst} can be proved by means of an argument by contradiction using defect measures, as done by Dyatlov~\cite{DyatlovSpectralGaps} in the semiclassical setting. A simple approximation argument proves~\eqref{EqESEst} more generally for all $v\in H^s(X)$ for which the norms on the right are finite, namely, one applies the estimate to smoothed versions $J_\eps v$ of $v$, where $J_\eps=\Op((1+\eps\wh\rho^{-1})^{-2})$ is uniformly bounded in $\Psi^0(X)$ and tends to $I$ in $\Psi^\delta(X)$ for all $\delta\to 0$, and thus strongly on $H^\sigma(X)$ for all $\sigma$, and uses that $\|[G P,J_\eps]v\|_{H^{s-m+2}}\leq C\|v\|_{H^s}$ remains uniformly bounded in view of~\eqref{EqESReg}; one thus obtains the boundedness of $B_0 J_\eps v$ in $H^s$, and thus by a weak compactness argument the membership $B_0 v\in H^s$ with the estimate~\eqref{EqESEst}. What one \emph{cannot} obtain in this fashion is the strong version of Theorem~\ref{ThmES}, which allows us to \emph{conclude} $H^s$-membership of $v$ at $\Gamma$; this is proved by regularizing the (positive commutator) argument itself.
\end{rmk}

\begin{example}
\label{ExESEx}
  We demonstrate that operators $P$ as above exist: take $X$ to be the 3-torus $X=\Sph^1_x\times\Sph^1_y\times\Sph^1_z$, and let
  \[
    P=D_y+(\sin x)D_x.
  \]
  Writing covectors as $\xi\,d x+\eta\,d y+\zeta\,d z$, the principal symbol of $P$ is $p=\eta+(\sin x)\xi$, whose differential is non-vanishing, so $\Sigma=p^{-1}(0)\setminus o$ is smooth. The Hamilton vector field is $H_p=\pa_y+(\sin x)\pa_x-(\cos x)\xi\pa_\xi$, the conic submanifold
  \[
    \Gamma = \{ x = 0,\ \xi = 0 \} \cap \Sigma = \{ (x=0,y,z;\ \xi=0,\eta=0,\zeta) \colon \zeta\neq 0 \}
  \]
  is invariant under the $H_p$-flow, and $\wh\rho=|\zeta|^{-1}$, defined in a conic neighborhood of $\Gamma$, Poisson-commutes with $p$. Let $\phi^u=|\zeta|^{-1}\xi$ and $\phi^s=x$, then the manifolds
  \begin{alignat*}{3}
    \Gamma^u &= (\phi^u)^{-1}(0) \cap \Sigma &\ =&\ \{ (x,y,z;\ 0,0,\zeta) \colon \zeta\neq 0 \}, \\
    \Gamma^s &= (\phi^s)^{-1}(0) \cap \Sigma &\ =&\ \{ (0,y,z;\ \xi,0,\zeta) \colon (\xi,\zeta)\neq(0,0) \}
  \end{alignat*}
  are $H_p$-invariant; moreover, $\wh\rho^{-1}\{\phi^u,\phi^s\}=1$, and~\eqref{EqESUS} holds with $w^{u/s}=1$ at $\Gamma$. Thus, Theorem~\ref{ThmES} applies. (Note that the trapping at $\Gamma$ is not the only delicate structure of $P$: for example, over $x=0$ there is also a radial set at $\{x=0,\eta=0,\zeta=0\}\cap\Sigma=N^*\{x=0\}\setminus o$, which however is disjoint from $\Gamma$.)
\end{example}

The key ingredient for the proof is a quantitative propagation estimate for classical operators $L\in\Psi^m$ with real principal symbol $\ell$ and subprincipal part $\ell_1=\wh\rho^{m-1}\sigma(\frac{1}{2 i}(L-L^*))$. Working on $S^*X$, suppose $\sfH_\ell=\wh\rho^{m-1}H_\ell$ is transversal to a hypersurface $Z$, and let $K\subset Z$ be compact. For intervals $I\subset\R$, we write, schematically, $\|u\|_{H^s(I)}$ for the ``$H^s$ norm of $u$ on $\{\exp(x\sfH_\ell)z\colon z\in K,\ x\in I\}$'', by which we mean, roughly, the $H^s$ norm of the microlocalization of $u$ to a small neighborhood of this set; this will be made rigorous later. Then for $T_1,T_2>0$ and $\delta_0>0$, and assuming $\ell_1\leq 0$, we have (up to numerical constants independent of $T_1,T_2,\delta_0$, and using a slight enlargement of $K$ in the norms on the right)
\begin{equation}
\label{EqESSkProp}
\begin{split}
  \|v\|_{H^s([0,T_2])} &\leq \sqrt{\frac{T_2}{T_1}}\|v\|_{H^s([-T_1,0])} + (T_2+\sqrt{T_1 T_2})\|L v\|_{H^{s-m+1}([-T_1,T_2+\delta_0])} \\
  &\qquad + C\|v\|_{H^{s-1/2}([-T_1,T_2+\delta_0])}.
\end{split}
\end{equation}
For general $\ell_1$, one can apply this to $L_\gamma=e^{-\gamma s}L e^{\gamma s}$ and $v_\gamma=e^{-\gamma s}v$ for $\gamma\geq\max\{0,\sup\ell_1\}$---thus $L_\gamma$ has non-positive subprincipal part---and obtain the same estimate but with an overall factor of $e^{\gamma(T_1+T_2)}$ on the right. Thus, if $L$ is symmetric, we get a quantitative propagation estimate depending on the lengths of the control and conclusion regions; if $L$ has a subprincipal part, the prototypical example being $L=D_x+i\ell_1$ on $\R_x$, $\ell_1\in\R$, which in particular annihilates $v=e^{\ell_1 x}$, the constants scale accordingly. We sketch the proof of~\eqref{EqESSkProp} at the end of this section.

We shall only sketch the main steps of the proof of Theorem~\ref{ThmES}, leaving the details to the reader; we give a detailed proof in the spacetime setting, which is of primary interest, in \S\ref{SsEC}. We work in a fixed neighborhood $|\phi^u|<\eps_0$ of $\Gamma^u$; by $\|\cdot\|_{H^s(I)}$, we shall mean the $H^s$ norm on the subdomain where $\phi^s\in I$. We shall also drop localizers to the characteristic set, as well as irrelevant constants. Recall that in~\eqref{EqESEst} we are effectively assuming a priori $H^s$-control on $v$ in a punctured neighborhood of $\Gamma^u$.
\begin{enumerate}
\item\label{ItESSkEq} Let $\Phi^u$ denote a quantization of $\phi^u$. The weak localization $\Phi^u v$ off the unstable manifold satisfies a better equation due to~\eqref{EqESUS},
  \[
    (P-i W^u)\Phi^u v = \Phi^u P v + R v,
  \]
  with $R\in\Psi^{m-2}$, and $W^u=(W^u)^*\in\Psi^{m-1}$ quantizing $\wh\rho^{-m+1}w^u$. A simple commutator argument exploiting the positivity of $w^u$ gives, for fixed small $\eps_0>0$,
  \begin{equation}
  \label{EqESSkEq}
    \|\Phi^u v\|_{H^{s+1}([-\eps_0,\eps_0])} \leq \|P v\|_{H^{s-m+2}} + \|v\|_{H^s([-2\eps_0,2\eps_0])} + \|v\|_{H^{s+1}(\{|\phi^u|>\eps_0\})};
  \end{equation}
  the second term comes from $R$, and the last one is an error term arising from the localization near $\Gamma^u$.
\item\label{ItESSkEqC} The quantitative estimate~\eqref{EqESSkProp} applied to $L=\Phi^u$ allows us to control, for $\delta>0$ to be chosen later,
  \begin{equation}
  \label{EqESSkEqC0}
    \|v\|_{H^s([-2\delta,2\delta])} \leq \delta^{1/2} ( \|v\|_{H^s(\pm[2\delta,\eps_0])} + \|\Phi^u v\|_{H^{s+1}([-\eps_0,\eps_0])} ) + \|v\|_{H^{s-1/2}};
  \end{equation}
  see Figure~\ref{FigESSkEqC}. This uses assumption~\eqref{EqESPoisson}, which implies that $\phi^s$ is comparable to the affine parameter along the integral curves of $\wh\rho^{-1}H_{\phi^u}$.
  \begin{figure}[!ht]
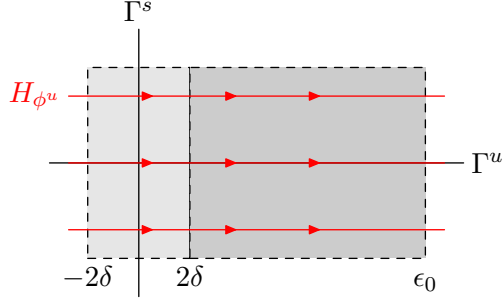

    \centering
    \inclfig{FigESSkEqC}
    \caption{Quantitative propagation estimate for $v$ in terms of $\Phi^u v$: the norm of $v$ in the light gray box is small, $\cO(\delta^{1/2})$, compared to that in the dark gray box due its small size in the $H_{\phi^u}$ direction (red). The labels indicate the $\phi^s$-variable. Also shown is the $H_{\phi^u}$ flow along which we propagate.}
    \label{FigESSkEqC}
  \end{figure}

  For small $\delta>0$, combined with~\eqref{EqESSkEq} and absorbing the piece $\|v\|_{H^s{[-2\delta,2\delta])}}$ of the second term on the right in~\eqref{EqESSkEq} (which gets the small prefactor $\delta^{1/2}$ from~\eqref{EqESSkEqC0}) into the left hand side of~\eqref{EqESSkEqC0}, this controls $v$ close to $\Gamma$ by a \emph{small} constant times $v$ away from $\Gamma$:
  \begin{equation}
  \label{EqESSkEqC}
  \begin{split}
    \|v\|_{H^s([-2\delta,2\delta])} &\leq \delta^{1/2}(\|v\|_{H^s(-[2\delta,2\eps_0])}+\|v\|_{H^s([2\delta,2\eps_0])} + \|P v\|_{H^{s-m+2}}) \\
      &\qquad\qquad + \|v\|_{H^{s+1}(\{|\phi^u|>\eps_0\})} + \|v\|_{H^{s-1/2}}.
  \end{split}
  \end{equation}
  This is analogous to the Lipschitz estimate \cite[Lemma~3.2]{DyatlovSpectralGaps} on the defect measure in the semiclassical setting.
\item\label{ItESSkLog} Using a quantitative propagation estimate~\eqref{EqESSkProp} for $P$, we can conversely estimate $v$ in $\phi^s\in\pm[2\delta,2\eps_0]$ by $v$ in $\pm[\delta,2\delta]$; this requires time $\sim\log\delta^{-1}$ propagation along $\sfH_p$ by the unstable dynamics of $\sfH_p$ within $\Gamma^u$.\footnote{To control $v$ in the fixed $\eps_0$-neighborhood of $\Gamma^u$, we again need to use the a priori control away from $\Gamma^u$, propagated along $\sfH_p$, but the constants for this part of the estimates do not matter.} Thus,
  \begin{equation}
  \label{EqESSkLog}
  \begin{split}
    \|v\|_{H^s(\pm[2\delta,2\eps_0])} &\leq (\log\delta^{-1})^{1/2} \|v\|_{H^s(\pm[\delta,2\delta])} + (\log\delta^{-1})\|P v\|_{H^{s-m+1}} \\
      &\qquad\qquad + \|v\|_{H^s(\{|\phi^u|>\eps_0\})} + \|v\|_{H^{s-1/2}};
  \end{split}
  \end{equation}
  see Figure~\ref{FigESSkEqLog}.
  \begin{figure}[!ht]
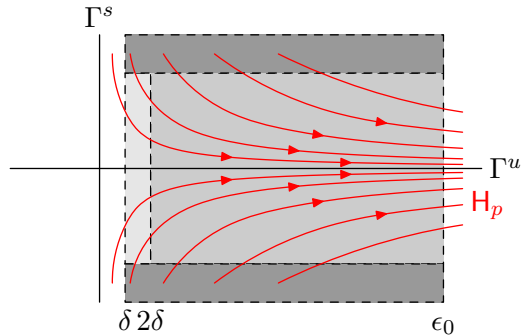

    \centering
    \inclfig{FigESSkEqLog}
    \caption{Control of $v$ for $\phi^s\in[2\delta,\eps_0]$ (medium gray) by $v$ on $\phi^s\in[\delta,2\delta]$ (light gray) and $v$ away from $\Gamma^u$ (dark gray) by time $\sim\log\delta^{-1}$ propagation along $\sfH_p$ (red). There is an analogous, symmetric, picture for $\phi^s<0$.}
    \label{FigESSkEqLog}
  \end{figure}
\item\label{ItESSkFin} Plugging~\eqref{EqESSkLog} into~\eqref{EqESSkEqC}, we can absorb $\|v\|_{H^s(\pm[\delta,2\delta])}$ into the left hand side of~\eqref{EqESSkEqC}, giving unconditional control on $\|v\|_{H^s([-2\delta,2\delta])}$. Plugged back into~\eqref{EqESSkLog}, this gives control on $v$ near $\Gamma^u$, resulting in the estimate~\eqref{EqESEst} with $N=s-\half$. To obtain the better error term, one proceeds iteratively, applying this estimate to a microlocalized version $B_0' v$ of $v$, where $B_0'\in\Psi^0(X)$ is elliptic at $\Gamma$, and take $B_0,B_1$ to have $\WF'(B_0),\WF'(B_1)\subset\Ell(B_0')$, and $\WF'(I-B_0')\cap\WF'(G)=\emptyset$; one can then estimate the error term $\|B_0' v\|_{H^{s-1/2}}$ by the analogous estimate with $s$ reduced by $\half$ and with slightly enlarged elliptic sets of $B_1,G$.
\end{enumerate}

\begin{rmk}
\label{RmkESSkew}
  If $p_1:=\wh\rho^{m-1}\sigma(\frac{1}{2 i}(P-P^*))$ was non-zero, one would get an extra factor $\sim\delta^{-\gamma/w^s}$ on the right in~\eqref{EqESSkLog}, with $\gamma\geq\max\{0,\sup p_1\}$, which can still be absorbed in~\eqref{EqESSkEqC} for $\gamma<\half w^s$. (On the other hand, step~\eqref{ItESSkEq} requires $\gamma<w^u$.)
\end{rmk}

We finally indicate the proof of the model estimate~\eqref{EqESSkProp}; the key tool is G\aa{}rding's inequality, which allows one to translate symbolic bounds into operator bounds. Namely, dropping localizers to $\Sigma$ and to $\Gamma^u$ (the latter can be taken to be $\sfH_p$-invariant, identically one in a small neighborhood of $K$) and choosing coordinates in which $\sfH_p=\pa_x$, we consider commutants $a=\wh\rho^{-2 s+m-1}\wt a$, where we design $\wt a=\wt a(x)$ with support in $[-T_1,T_2+\delta_0]$ such that $0\leq\wt a\leq 1$, and $\sfH_p\wt a\leq\frac{2}{T_1}$ on $[-T_1,0]$ and $\sfH_p\wt a\leq -\frac{1}{2 T_2}$ on $[0,T_2]$. The latter implies $\wt a\leq 2 T_2(-\sfH_p\wt a)$ on $[0,T_2]$, which we arrange on all of $[0,T_2+\delta_0]$. See Figure~\ref{FigESComm}.

\begin{figure}[!ht]
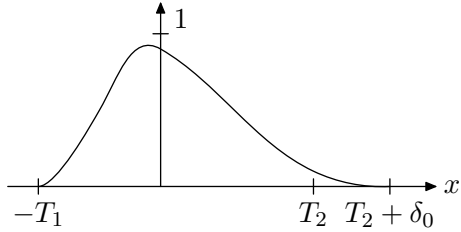

  \centering
  \inclfig{FigESComm}
  \caption{The commutant $\wt a=\wt a(x)$ used in the quantitative propagation estimate; the slope of $\wt a$ is controlled from above on $[-T_1,0]$, and has definite decay on $[0,T_2]$.}
  \label{FigESComm}
\end{figure}

We can arrange that $\wt a=\wt a_1^2+\wt a_2^2$, where $\wt a_2=\sqrt{\wt a}$ on $[0,T_2+\delta_0]$, and $0\leq\wt a_1\leq 1$ is supported in $[-T_1,0)$, and so that for $a_j=\wh\rho^{-s+(m-1)/2}\wt a_j$, $j=1,2$,
\[
  H_p a = -b^2 - \frac{1}{4 T_2}\wh\rho^{-m+1}a_2^2 + e,
\]
where $b$ is non-negative, with lower bound $b\geq\frac{1}{2\sqrt T_2}\wh\rho^{-s}$ on $[0,T_2]$, while $e$ is supported in $[-T_1,0]$ with $|e|\leq\frac{2}{T_1}\wh\rho^{-2 s}$. For $A=A_1^*A_1+A_2^*A_2$, $B$, $E$ denoting quantizations of $a$ (and $a_1,a_2$), $b$, $e$, one gets by means of G\aa{}rding's inequality
\begin{align*}
  &2\Im\la P u,A u\ra = \la i[P,A]u,u\ra \\
  &\qquad\leq -\|B u\|_{L^2}^2 - \frac{1}{4 T_2}\|A_2 u\|_{H^{-(m-1)/2}}^2 + |\la E u,u\ra| + C\|u\|_{H^{s-1/2}}^2 \\
  &\qquad\leq -\frac{1}{4 T_2}\|u\|_{H^s([0,T_2])}^2 - \frac{1}{4 T_2}\|A_2 u\|_{H^{-(m-1)/2}}^2 + \frac{2}{T_1}\|u\|_{H^s([-T_1,0])}^2 + C\|u\|_{H^{s-1/2}}^2.
\end{align*}
The left hand side can be estimated by
\begin{align*}
  2|\la P u,A u\ra| &\geq -2\|A_1 u\|_{H^{-(m-1)/2}}\|A_1 P u\|_{H^{(m-1)/2}}-2\|A_2 u\|_{H^{-(m-1)/2}}\|A_2 P u\|_{H^{(m-1)/2}} \\
    &\geq -\frac{2}{T_1}\|u\|_{H^s([-T_1,0])}^2 - \frac{1}{4 T_2}\|A_2 u\|_{H^{-(m-1)/2}}^2 \\
    &\quad\qquad\qquad - \Bigl(\frac{T_1}{2}+4 T_2\Bigr)\|P u\|_{H^{s-m+1}([-T_1,T_2+\delta_0])}^2.
\end{align*}
Combining the two gives the desired estimate
\[
  \|u\|_{H^s([0,T_2])}^2 \leq \frac{16 T_2}{T_1}\|u\|_{H^s([-T_1,0])}^2 + (2 T_1 T_2+16 T_2^2)\|P u\|^2_{H^{s-m+1}([-T_1,T_2+\delta_0])} + C\|u\|_{H^{s-1/2}}^2.
\]
When making this precise (in particular the $H^s$ norms here) in the cusp setting in~\S\ref{SsEC}, we shall simply keep writing $4 T_2\|B u\|^2$ for the term on the left. The basic estimate then is:
\begin{lemma}
\label{LemmaESBound}
  Let $B,B'\in\Psi^s(X)$ with $\WF'(B')\subset\Ell(B)$, and suppose that their rescaled symbols $b=\wh\rho^s\sigma(B)$, $b'=\wh\rho^s\sigma(B')$ satisfy $|b'|\leq b$ on $\WF'(B')$. Then for any $\delta>0$, there exists a constant $C$ such that
  \[
    \|B' u\|_{H^s} \leq (1+\delta)\|B u\|_{H^s} + C\|u\|_{H^{s-1/2}}.
  \]
\end{lemma}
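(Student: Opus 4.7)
My plan is to convert the symbolic inequality $|b'|\leq b$ on $\WF'(B')$ into the operator estimate via a factorization $B'=QB$ modulo $\Psi^{-\infty}$, followed by a sharp $L^2$-type bound on $Q$. The argument has three steps: microlocalization, factorization, and the sharp operator bound.

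First, I would use $\WF'(B')\subset\Ell(B)$ to replace $B'$, modulo $\Psi^{-\infty}(X)$, by an operator whose full symbol is supported in a small conic neighborhood $U$ of $\WF'(B')$ with $\ol U\subset\Ell(B)$; this is a standard microlocal partition-of-unity reduction. On $\WF'(B')$, the inequality $|b'|\leq b$ forces $b\geq 0$, and $b'=0$ wherever $b=0$, so after shrinking $U$ we have $b>0$ and $|b'|\leq(1+\delta/4)b$ on $U$ by continuity. Then I construct $Q\in\Psi^0(X)$ with $B'=QB$ modulo $\Psi^{-\infty}$ and $|\sigma(Q)|\leq 1+\delta/4$: set $q_0:=b'/b$ on $U$, smoothly extend by a cutoff to an element of $S^0$, quantize to $Q_0$, and use ellipticity of $B$ on $U$ to iteratively correct, Borel-summing over $Q_j\in\Psi^{-j}$, so that the difference $B'-(Q_0+\cdots+Q_j)B\in\Psi^{s-j-1}$ at each stage.

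Finally, the sharp $L^2$-boundedness theorem (equivalently, sharp G\aa{}rding applied to the principally positive operator $(1+\delta/4+\eta)^2 I - Q^*Q$) yields, for any $\eta>0$,
\[
\|Qw\|_{L^2}\leq(1+\delta/4+\eta)\|w\|_{L^2}+C\|w\|_{H^{-1/2}}.
\]
Conjugating by a self-adjoint elliptic $\Lambda^s\in\Psi^s$ with $\sigma(\Lambda^s)=\wh\rho^{-s}$ gives the analogous $H^s\to H^s$ bound; applied to $w=Bu$ and combined with the smoothing remainder $R=B'-QB\in\Psi^{-\infty}$, whose contribution $\|Ru\|_{H^s}$ is dominated by $\|u\|_{H^{-N}}\leq\|u\|_{H^{s-1/2}}$ for $N$ large, this produces the stated inequality (with $\delta/2$ absorbed into $\delta$ by a slight initial shrinkage).

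The main technical obstacle is arranging that the error on the right genuinely lands on $\|u\|_{H^{s-1/2}}$ rather than on $\|Bu\|_{H^{s-1/2}}$ or a higher Sobolev norm of $u$; one clean route is to execute the entire argument at the quadratic-form level by applying sharp G\aa{}rding directly to the $\Psi^{4s}$ operator $(1+\delta)^2 B^*\Lambda^{2s}B - B'^*\Lambda^{2s}B'$, whose principal symbol is nonnegative after the modifications above, and to exploit the specific product (``square'') structure to recover the half-order-loss error in the form $\|u\|_{H^{s-1/2}}^2$.
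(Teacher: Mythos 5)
Your factorization route ($B'=QB+R$ with $Q\in\Psi^0$, $|\sigma(Q)|\leq 1$, $R\in\Psi^{-\infty}$, then the sharp $L^2$ operator bound for $Q$) is correct and does prove the estimate, but it is substantially more machinery than the paper uses, and your closing paragraph points to a confusion worth clearing up. The paper's proof is one step: the order-$2s$ operator $(1+\delta)^2 B^*B-(B')^*B'$ has principal symbol $\wh\rho^{-2s}\bigl((1+\delta)^2 b^2-(b')^2\bigr)$, which admits a \emph{smooth real square root} $e\in S^s$ globally, since near $\WF'(B')$ one has $b>0$ and $(1+\delta)b>|b'|$ (strict positivity, courtesy of $\delta>0$), while away from $\WF'(B')$ one may simply take $e=(1+\delta)\sigma(B)$. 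With $E=\Op(e)$ this gives $(1+\delta)^2 B^*B-(B')^*B'-E^*E\in\Psi^{2s-1}$, and pairing against $u$ in $L^2$ yields
\[
\|B'u\|_{L^2}^2 \leq (1+\delta)^2\|Bu\|_{L^2}^2 - \|Eu\|_{L^2}^2 + C\|u\|_{H^{s-1/2}}^2 \leq (1+\delta)^2\|Bu\|_{L^2}^2 + C\|u\|_{H^{s-1/2}}^2,
\]
which is the assertion. Notice this comes out naturally with $L^2$ norms on $B'u,Bu$ (which is how the lemma is used), and that is precisely what dissolves your ``main technical obstacle'': the $H^{s-1/2}$ error appears because the remainder is of order $2s-1$ and is paired directly against $u$, not because of any conjugation. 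Your proposed fix of applying G\aa{}rding to the $\Psi^{4s}$ operator $(1+\delta)^2 B^*\Lambda^{2s}B-(B')^*\Lambda^{2s}B'$ would in fact produce an error $\|u\|_{H^{2s-1/2}}$, not $\|u\|_{H^{s-1/2}}$; the point is not to insert $\Lambda^{2s}$. Finally, even within your factorization route the Borel-summed parametrix is overkill: a single correction $Q_0\in\Psi^0$ with $B'-Q_0B\in\Psi^{s-1}$ already leaves a remainder whose $L^2$ contribution lands on $C\|u\|_{H^{s-1}}\leq C\|u\|_{H^{s-1/2}}$.
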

\begin{proof}
  The principal symbol of $(1+\delta)^2 B^*B-(B')^*B'$ has a smooth real square root $e\in S^s(T^*X)$: near $\WF'(B')$, this follows from $(1+\delta)b>|b'|$, while away from $\WF'(B')$, we simply have $e=(1+\delta)\sigma(B)$. Thus, $(1+\delta)^2 B^*B-(B')^*B'-E^*E\in\Psi^{2 s-1}(X)$. Applying this to $u$ and then pairing with $u$ gives the desired result.\footnote{The sharp G\aa{}rding inequality allows one to take $\delta=0$. On the other hand, for $\delta>0$, one can replace the error term by $C\|u\|_{H^{-N}}$ for any fixed $N\in\R$ by improving the square root construction in the proof: one takes $e$ to be the \emph{full} symbol of $(1+\delta)B$ in a fixed small neighborhood of $\WF'(B')$, hence one only needs to solve away the $\Psi^{2 s-1}(X)$-error term near $\WF'(B')$, where $e$ is positive, so the usual square root construction applies. Neither improvement will be needed in our application, so we settle for the simplest version here.}
\end{proof}

\subsection{Estimates in the spacetime setting}
\label{SsEC}

Let $X$ denote a closed $(n-1)$-dimensional manifold, and let
\[
  M^\circ=\R_t\times X.
\]
We describe the (partial) compactification $M$ of $M^\circ$ at future infinity and the relevant operator algebra and function spaces---(complete) cusp pseudodifferential operators and weighted cusp Sobolev spaces---in~\S\ref{SssECC}, before stating and proving the microlocal estimate at trapped sets lying over $\pa M$ in~\S\ref{SssECP}. While our main applications concern wave equations on Lorentzian spacetimes, our arguments in the present section are entirely \emph{microlocal}, and thus apply to any pseudodifferential operator with suitable null-bicharacteristic flow and subprincipal symbol conditions.

\subsubsection{Compactification and the cusp algebra}
\label{SssECC}

The cusp ps.d.o.\ algebra was introduced by Mazzeo--Melrose \cite{MazzeoMelroseFibred} in more generality than needed here, hence we give a simplified account adapted to present interests. As a further reference, we refer the reader to the lecture notes \cite{VasyMinicourse}. We partially compactify $M^\circ$ by introducing
\begin{equation}
\label{EqECCMfd}
  M = \bigl(M^\circ \sqcup ([0,\infty)_\tau \times X)\bigr) / \sim,\quad (t,x)\sim(\tau=t^{-1},x),\ t>0.
\end{equation}
The manifold $M$ has a natural smooth structure, with functions near $\tau=0$ being smooth if and only if they are smooth functions of $(\tau,x)$. Consider then the Lie algebra of \emph{cusp vector fields},
\[
  \Vcu(M) := \{ V\in\cV(M) \colon V\tau \in \tau^2\CI(M) \}.
\]
If $x^1,\ldots,x^{n-1}$ denotes local coordinates on $X$, $\Vcu(M)$ is generated over $\CI(M)$ by $\tau^2\pa_\tau=-\pa_t$ and $\pa_{x^1},\ldots,\pa_{x^{n-1}}$; hence $\Vcu(M)$ is a uniform version of $\cV(M^\circ)$ as $t\to\infty$, encoding the stationary nature ($t$-invariance) of $M^\circ$.\footnote{In previous works, see in particular \cite[\S3.3]{HintzVasyKdSStability}, we encoded stationarity by compactifying using $\tau'=e^{-t}$ as the defining function of future infinity and the b-structure corresponding to the natural smooth structure generated by $\CI(X)$ and smooth functions of $\tau'$. The relationship between the two structures is $\tau'=e^{-1/\tau}$, $\tau'\pa_{\tau'}=\tau^2\pa_\tau$. Since we are mainly interested in \emph{polynomial} weights in $t$, as appropriate for perturbations of the Kerr spacetime, the cusp setting is more natural; but we could equally well work in the b-setting with logarithmic weights $|\log\tau'|^\alpha$.} This also shows directly that $\Vcu(M)$ is the space of smooth sections of a natural vector bundle $\Tcu M$, called \emph{cusp tangent bundle}, with local frame
\begin{equation}
\label{EqECCTFrame}
  \tau^2\pa_\tau,\pa_{x^1},\ldots,\pa_{x^{n-1}};
\end{equation}
we emphasize that $\tau^2\pa_\tau$ is non-vanishing as a cusp vector field \emph{down to $\tau=0$}.\footnote{For general manifolds $M$, not necessarily arising from our specific construction, $\Tcu M$ is well-defined once one fixes the equivalence class of a defining function $\tau\in\CI(M)$ of $\pa M$ modulo $\tau^2\CI(M)$ (note that the latter space is independent of $\tau$).} The dual bundle $\Tcu^*M$, called \emph{cusp cotangent bundle}, thus has local sections $\frac{d\tau}{\tau^2},d x^1,\ldots,d x^{n-1}$, with $\frac{d\tau}{\tau^2}=-d t$ non-singular as a cusp 1-form down to $\tau=0$. We note that there are natural smooth maps $\Tcu M\to T M$ and $T^*M\to\Tcu^*M$ which are isomorphisms over $M^\circ$, but not over $\pa M$, where they have 1-dimensional kernel and cokernel.

Of particular importance in applications is the symmetric second tensor power $S^2\,\Tcu^*M$. Indeed, stationary metrics are smooth non-degenerate sections of $S^2\,\Tcu^*M$ \emph{down to $\tau=0$}; conversely, smooth non-degenerate sections $g\in\CI(M;S^2\,\Tcu^*M)$ are `asymptotically stationary', in that the components of $g-g|_{\pa M}$ in the frame~\eqref{EqECCTFrame} decay at a polynomial rate as $t\to\infty$. We shall also make use of the cusp density bundle whose sections are of the form $a|\frac{d\tau}{\tau^2} d x^1\cdots d x^{n-1}|=a|d t d x^1\cdots d x^{n-1}|$ with $a\in\CI(M)$; by a \emph{cusp volume density}, we mean a \emph{positive} such density, i.e.\ $a>0$.

Recall the filtered algebra of \emph{cusp differential operators}
\[
  \Diffcu(M) = \bigcup_{m\geq 0}\Diffcu^m(M),
\]
where $\Diffcu^m(M)$ is generated over $\CI(M)$ by up to $m$-fold products of cusp vector fields; for $m=0$, we set $\Diffcu^0(M):=\CI(M)$. If $E,F\to M$ are smooth vector bundles, we define $\Diffcu(M;E,F)$ in local trivializations of $E,F$ as $\rank F\times\rank E$ matrices of elements of $\Diffcu(M)$. Thus, elements of $\Diffcu(M;E,F)$ define continuous linear maps $\CI(M;E)\to\CI(M;F)$ and $\CIdot(M;E)\to\CIdot(M;F)$; here $\CIdot$ consists of smooth functions or sections vanishing to infinite order at $\pa M$.

The (in our setup canonical) cusp vector field $\tau^2\pa_\tau$ induces a natural fiber-linear function\footnote{For general $M$ and with $\tau$ fixed modulo $\tau^2\CI(M)$, this function is well-defined at $\Tcu_{\pa M}^*M$.}
\begin{equation}
\label{EqECCTimeMomentum}
  \sigma(\varpi) := -\varpi(\tau^2\pa_\tau), \quad \varpi\in\Tcu^*M.
\end{equation}
This is the same as the coordinate $\sigma$ in the set of local fiber coordinates on $\Tcu^*M$ defined by writing cusp 1-forms as
\begin{equation}
\label{EqECCCoord}
  -\sigma\tfrac{d\tau}{\tau^2} + \xi_i d x^i = \sigma\,d t+\xi_i d x^i.
\end{equation}
Given an operator
\[
  P = \sum_{j+|\alpha|\leq m} a_{j\alpha}(\tau,x)(-\tau^2 D_\tau)^j D_x^\alpha \in\Diffcu^m(M),\quad a_{j\alpha}\in\CI(M),
\]
we define its principal symbol in the coordinates~\eqref{EqECCCoord} by
\[
  \sigma_\cuop^m(P) := \sum_{j+|\alpha|=m} a_{j\alpha}\sigma^j\xi^\alpha;
\]
we often simply write $\sigma(P)$ for brevity. The symbol has the usual properties
\begin{equation}
\label{EqECCSymb}
  \sigma_\cuop^{m_1+m_2}(P_1\circ P_2) = p_1 p_2, \quad
  \sigma_\cuop^{m_1+m_2-1}(i[P_1,P_2]) = H_{p_1}p_2
\end{equation}
for $P_j\in\Diffcu^{m_j}(M)$, $p_j=\sigma_\cuop^{m_j}(P_j)$, $j=1,2$, where for a function $p\in\CI(\Tcu^*M)$ its Hamilton vector field $H_p$ is given by
\[
  H_p = -(\pa_\sigma p)\tau^2\pa_\tau + (\pa_{\xi_i}p)\pa_{x^i} + (\tau^2\pa_\tau p)\pa_\sigma - (\pa_{x^i}p)\pa_{\xi_i} \in \Vcu(\Tcu^*M).
\]
In particular, $H_p t=\pa_\sigma p$ is a smooth function down to $\tau=0$.

The algebra $\Diffcu(M)$ can be microlocalized, which amounts to allowing as symbols more general functions than polynomials in the fibers. Thus, denote by $S^m(\Tcu^*M)$ the space of symbols of order $m$, i.e.\ functions $a\in\CI(\Tcu^*M)$ satisfying the bounds
\begin{equation}
\label{EqECCCuspSymb}
  |\pa_\tau^j\pa_x^\alpha\pa_\sigma^k\pa_\xi^\beta a(\tau,x,\sigma,\xi)| \leq C_{j\alpha k\beta}(1+|\sigma|+|\xi|)^{m-(k+|\beta|)}
\end{equation}
in local coordinates; the quantization $\Op(a)$ of $a\in S^m$ acts on smooth functions $u\in\CIdot(M)$ supported in a coordinate chart near $\tau=0$ via the usual formula\footnote{The Schwartz kernel of $\Op(a)$ in $(t,x)$-variables is super-polynomially decreasing away from the diagonal, which is sufficient for $\Op(a)$ to act on the polynomially weighted function spaces below. Thus, unlike in the b-setting \cite[\S2]{HintzVasySemilinear}, we do not need to insert a cutoff for $t$ near $t'$.}
\[
  \Op(a)u(t,x) = (2\pi)^{-n}\iiiint e^{i((t-t')\sigma+(x-x')\xi)} a(\tau,x,\sigma,\xi)u(t',x')\,d t'\,d x'\,d\sigma\,d\xi,
\]
For general $u$, define $\Op(a)u$ using a partition of unity. The space $\Psicu^m(M)$ then consists of all operators of the form $\Op(a)$. The principal symbol map extends to $\sigma_\cuop^m\colon\Psicu^m(M)\to S^m/S^{m-1}(\Tcu^*M)$ enjoying the properties~\eqref{EqECCSymb}.

We further define the operator wave front set $\WFcu'(A)\subset\Tcu^*M\setminus o$ of $A=\Op(a)$ as the essential support $\esssupp a$ of the full symbol $a$, which is the complement of all $\varpi\in\Tcu^*M\setminus o$ such that $a$ is of order $-\infty$ in a conic neighborhood of $\varpi$. This is a conic set by definition, hence can be identified with its image in the quotient space
\[
  \Scu^*M := (\Tcu^*M\setminus o) / \R_+,
\]
the cusp cosphere bundle of $M$. It is often convenient to radially compactify $\Tcu^*M$, namely, in a local trivialization $\cU\times\R^n$ of $\Tcu^*M$, we set
\[
  \ol{\Tcu^*_\cU M} := \cU \times \ol{\R^n},
\]
where $\ol{\R^n}$ is the radial compactification
\[
  \ol{\R^n} := \Bigl(\R^n \sqcup \bigl([0,\infty)_\rho\times\Sph^{n-1}\bigr) \Bigr) / \sim,
  \quad
  \R^n\setminus\{0\}=r\omega\sim(\rho=r^{-1},\omega).
\]
We can then identify $\Scu^*M$ with the new boundary `at fiber infinity' of $\ol{\Tcu^*}M$. Thus, functions, resp.\ vector fields, which are homogeneous of degree $0$ are identified with functions, resp.\ induce vector fields, on $S^*M$. (In the case of vector fields, the restriction $\Tb(\ol{\Tcu^*}M)|_{\Scu^*M}\to T(\Scu^*M)$ to $S^*M$ has a non-trivial kernel generated by the fiber-radial vector field.) We shall identify conic subsets of $\Tcu^*M\setminus o$ with their closures in $\ol{\Tcu^*M}\setminus o$ and also with their boundary at fiber at infinity, or equivalently their quotient in $\Scu^*M$.

Working on $M^\circ$, we can define another closely related class of operators, $\Psi_\infty^m(M)$: given a \emph{uniform symbol} $a\in S_\infty^m(T^*M^\circ)$ satisfying
\begin{equation}
\label{EqECCUnifSymb}
  |\pa_t^j\pa_x^\alpha \pa_\sigma^k\pa_\xi^\beta a(t,x,\sigma,\xi)| \leq C_{j\alpha k\beta}(1+|\sigma|+|\xi|)^{m-(k+|\beta|)}
\end{equation}
(the difference to $S^m$ being that we use $\pa_t$, not $\pa_\tau$), we define the quantization $\Op(a)$ of $a$ by the same formula as above; its Schwartz kernel is again super-polynomially decaying off the diagonal. This defines the \emph{uniform algebra} $\Psi_\infty(M)$ which generalizes $\Psicu(M)$ in that the symbols are no longer required to be smooth down to $\tau=0$, so $\Psicu(M)\subset\Psi_\infty(M)$; on the other hand, uniform symbols do not have well-defined limits at $\tau=0$. Since the regularity of $a\in S^m_\infty$ in the base is precisely boundedness with respect to iterative application of cusp vector fields, we shall think of elements of $\Psi_\infty(M)$ as cusp ps.d.o.s with coefficients having infinite cusp regularity. They form a filtered algebra, $\Psi_\infty^{m_1}(M)\circ\Psi_\infty^{m_2}(M)\subset\Psi_\infty^{m_1+m_2}(M)$, and the symbolic properties~\eqref{EqECCSymb} continue to hold in $\tau>0$.

Both classes of operators are invariant under conjugation by polynomial weights in $t$ or equivalently $\tau=t^{-1}$, which allows us to define bi-filtered algebras $\tau^\alpha\Psicu^m(M)$ and $\tau^\alpha\Psi_\infty^m(M)$, $\alpha\in\R$, with both orders additive upon composition of operators. We point out that if for $P\in\tau^\alpha\Psicu^m(M)$ one has $\sigma(P)=0\in\tau^\alpha S^m/\tau^\alpha S^{m-1}(\Tcu^*M)$, then one can only conclude that $P\in\tau^\alpha\Psicu^{m-1}(M)$, likewise for the uniform algebra; thus, the principal symbol captures only the leading order behavior of $P$ in the sense of differential order, but not in the sense of decay at $\tau=0$.

Closely related to uniform symbols~\eqref{EqECCUnifSymb} are cusp-conormal functions
\begin{equation}
\label{EqECCCuspConormal}
  \cA_\cuop(M) := \{ u\in L^\infty(M) \colon A u\in L^\infty(M)\ \forall\,A\in\Diffcu(M) \}, \quad
  \cA_\cuop^\alpha(M) := \tau^\alpha\cA_\cuop(M).
\end{equation}
Indeed, $\cA_\cuop(M)=\cA_\cuop^0(M)\subset S_\infty^0(T^*M^\circ)$. In the notation of~\S\ref{SM}, we have (restricting supports to  $t\geq 1$)
\[
  \cA_\cuop^\alpha(M)=t^{-\alpha}\CI_b(M^\circ).
\]
Standard conormal functions are defined using b- instead of cusp operators, to wit
\begin{equation}
\label{EqECCBConormal}
  \cA_\bop(M) := \{ u\in L^\infty(M) \colon A u\in L^\infty(M)\ \forall\,A\in\Diffb(M) \}, \quad
  \cA_\bop^\alpha(M) := \tau^\alpha\cA_\bop(M).
\end{equation}
Thus, $\cA_\bop^\alpha(M)=t^{-\alpha}\CI_{b,\bop}(M^\circ)\subset\cA_\cuop^\alpha(M)$. Conormal functions valued in vector bundles are defined using local trivializations, or equivalently using connections to define derivatives of sections.

Finally, we define the natural $L^2$-based function spaces for cusp analysis. Being interested in uniform analysis near $\tau=0$, we shall implicitly assume that all functions have support in, say, $\tau\leq 1$ (i.e.\ $t\geq 1$). Fixing any cusp volume density\footnote{All choices lead to the same function spaces with equivalent norms since we are restricting supports to the fixed \emph{compact} subset $\tau\leq 1$ of $M$.} $\nu$, we define the $L^2$-space
\[
  \Hcu^0(M) \equiv L^2_\cuop(M) := L^2(M;\nu),
\]
as well as its weighted analogue $\tau^\alpha L^2_\cuop(M)\equiv\Hcu^{0,\alpha}(M)$, $\alpha\in\R$, consisting of all $u\in L^2_\loc(M)$ for which $\tau^{-\alpha}u\in L^2_\cuop(M)$. For integers $k\geq\N_0$, we define the \emph{weighted cusp Sobolev space}
\[
  \Hcu^{k,\alpha}(M) := \{ u\in \tau^\alpha L^2_\cuop(M) \colon P u\in \tau^\alpha L^2_\cuop(M)\ \forall\,P\in\Diffcu^k(M) \}.
\]
This becomes a Hilbert space with the norm $\|u\|_{\Hcu^{k,\alpha}(M)}^2=\sum_j \|P_j u\|_{\tau^\alpha L^2_\cuop(M)}^2$, where $\{P_j\}\subset\Diffcu^k(M)$ is a finite collection of operators spanning $\Diffcu^k(M)$ over $\CI(M)$. For general $k\in\R$, $\Hcu^{k,\alpha}(M)$ can be defined by duality and interpolation. Any operator $P\in\tau^{-\beta}\Psicu^m(M)$ or $P\in\tau^{-\beta}\Psi_\infty^m(M)$ then defines a continuous linear map
\[
  P \colon \Hcu^{s,\alpha}(M) \to \Hcu^{s-m,\alpha-\beta}(M),\quad s,\alpha\in\R.
\]
Using local trivializations and partitions of unity, one can similarly define $\Psicu^m(M;E,F)$, $\Psi_\infty^m(M;E,F)$ for vector bundles $E,F\to M$, as well as function spaces $\Hcu^{s,\alpha}(M;E)$.

Lastly, we remark that the basic lemma~\ref{LemmaESBound} is proved using a purely symbolic argument, hence applies to cusp ps.d.o.s and the uniform algebra as well.

\subsubsection{Setup and proof of the microlocal estimate}
\label{SssECP}

Define $M$ by~\eqref{EqECCMfd}, and let $E\to M$ denote a smooth vector bundle such as $E=S^2\,\Tcu^*M$. Let
\[
  P\in\Psicu^m(M;E)
\]
denote a principally scalar operator with principal symbol $p=\sigma_\cuop^m(P)\in\CI(\Tcu^*M)$. Its characteristic set is denoted
\[
  \Sigma := p^{-1}(0) \subset \Tcu^*M\setminus o.
\]
We define the stationary model at $\pa M$ as follows: denote the restriction of the Hamilton vector field $H_p\in\Vcu(\Tcu^*M)$ to $\Tcu_{\pa M}^*M$ \emph{as a cusp vector field} by $H_{p_0}$; in local coordinates, this simply means
\[
  H_{p_0} = -(\pa_\sigma p_0)\tau^2\pa_\tau + (\pa_{\xi_i}p_0)\pa_{x^i} - (\pa_{x^i}p_0)\pa_{\xi_i},\quad p_0:=p|_{\tau=0},
\]
the point being that we keep the $\tau^2\pa_\tau$-term. We can identify this with the stationary vector field (i.e.\ it commutes with $\cL_{\pa_t}$)
\[
  (\pa_\sigma p_0)\pa_t + (\pa_{\xi_i}p_0)\pa_{x^i} - (\pa_{x^i}p_0)\pa_{\xi_i} \in \cV(T^*M^\circ),
\]
which is indeed the same as the Hamilton vector field $H_{p_0}$ (and thus justifies the notation) if we define $p_0\in\CI(T^*M^\circ)$ by stationary extension, to wit, $p_0(t,x,\sigma,\xi)\equiv p_0(x,\sigma,\xi)$.

We make the following assumptions near the compact $\CI$ submanifold $\Gamma\subset\Sigma\cap\Scu^*_{\pa M}M$:
\begin{enumerate}[label=(P.\arabic*),ref=P.\arabic*]
\item\label{ItECPNondeg} we have $d p_0\neq 0$ on $\Sigma$ near $\Gamma$;
\item\label{ItECPWeight} there exists a function $\wh\rho\in\CI(\Tcu^*M\setminus o)$, $\wh\rho>0$, which is homogeneous of degree $-1$ such that $H_{p_0}\wh\rho=0$ near $\Gamma$;
\item\label{ItECPHam} the rescaled Hamilton vector field for the stationary model, $\sfH_{p_0}:=\wh\rho^{m-1}H_{p_0}\in\Vcu(\Scu^*M)$, is tangent to $\Gamma$, and satisfies
\[
  \inf_\Gamma \sfH_{p_0}t > 0.
\]
\item\label{ItECPMfdsBdy} there are $\CI$ orientable submanifolds $\bar\Gamma^{u/s}\subset\Sigma\cap\Scu^*_{\pa M}M$ near $\Gamma$ intersecting transversally at $\Gamma$ which have codimension\footnote{As in~\S\ref{SsEC}, the case of higher codimension can be treated similarly, but is not needed for our application.} $1$ inside of $\Sigma\cap\Scu^*_{\pa M}M$ and to which $\sfH_{p_0}$ is tangent;
\item\label{ItECPDefBdy} there exist defining functions $\bar\phi^{u/s}\in\CI(\Scu^*_{\pa M}M)$ of $\bar\Gamma^{u/s}$ inside of $\Sigma\cap\Scu^*_{\pa M}M$, defined locally near $\Gamma$, such that in a neighborhood of $\Gamma$ inside $\Sigma$,
  \begin{gather}
  \label{EqECPDefBdyHyp}
    \sfH_{p_0}\bar\phi^u = -\bar w^u\bar\phi^u,\quad \sfH_{p_0}\bar\phi^s = \bar w^s\bar\phi^s, \\
  \label{EqECPDefBdySymp}
    \wh\rho^{-1}H_{\bar\phi^u}\bar\phi^s = \wh\rho^{-1}\{\bar\phi^u,\bar\phi^s\} > 0,
  \end{gather}
  where we denote the homogeneous degree $0$ and $\tau$-independent extensions of $\bar\phi^{u/s}$ to $\Tcu^*M\setminus o$ by the same letters; we assume that
  \begin{equation}
  \label{EqECPDefBdyNumin}
     \nu_{\rm min} := \min\Bigl\{\inf_\Gamma\bar w^u,\inf_\Gamma\bar w^s\Bigr\} > 0.
  \end{equation}
\item\label{ItECPMfdsInt} there exist $\CI$ submanifolds $\Gamma^{u/s}\subset\Sigma$ such that $\Gamma^{u/s}\cap\Scu^*_{\pa M}M=\bar\Gamma^{u/s}$, and so that $\sfH_p:=\wh\rho^{m-1}H_p$ is tangent to $\Gamma^{u/s}$. There exist defining functions $\phi^{u/s}\in\CI(\Scu^*M)$ of $\Gamma^{u/s}$ inside of $\Sigma$ so that $\phi^{u/s}-\bar\phi^{u/s}\in\tau\CI(\Scu^*M)$.
\end{enumerate}

\begin{rmk}
\label{RmkECPMfdsDef}
  The conditions in assumptions~\eqref{ItECPDefBdy}--\eqref{ItECPMfdsInt} only depend on the restrictions of $\bar\phi^{u/s}$, resp.\ $\phi^{u/s}$, to $\Sigma\cap\Scu^*_{\pa M}M$, resp.\ $\Sigma\cap\Scu^*M$.
\end{rmk}

It is important for our application to relax the regularity of $P$ and $\Gamma^{u/s}$. Thus, we shall allow
\begin{equation}
\label{EqECPNonsmooth}
  P\in\Psicu^m(M;E)+\tau^\alpha\Psi_\infty^m(M;E),\quad \alpha>0.
\end{equation}
(The characteristic set of such $P$ is still smooth, and will be described precisely in Lemma~\ref{LemmaKChar} below.) We then relax assumption~\eqref{ItECPMfdsInt} to:
\begin{enumerate}[label=(\ref*{ItECPMfdsInt}'),ref=\ref*{ItECPMfdsInt}']
\item\label{ItECPMfdsInt2} there exist subsets $\Gamma^{u/s}\subset\Sigma$, which are $\CI$ submanifolds of $S^*M^\circ$ in $\tau>0$, such that $\Gamma^{u/s}\cap\Scu^*_{\pa M}M=\bar\Gamma^{u/s}$, and so that $\sfH_p:=\wh\rho^{m-1}H_p$ is tangent to $\Gamma^{u/s}$ in $\tau>0$. There exist functions $\phi^{u/s}\in\CI(S^*M^\circ)$ such that $\Gamma^{u/s}=\Sigma\cap(\phi^{u/s})^{-1}(0)$ near $\Gamma$, and so that
  \[
    \phi^{u/s} - \bar\phi^{u/s} \in \cA_\cuop^\alpha(\Scu^*M);
  \]
  see also Figures~\ref{FigMMap} and \ref{FigMMapSExt}.
\end{enumerate}

The weight in~\eqref{ItECPWeight} can be taken to be $\tau$-independent by replacing it with $(\tau,x)\mapsto\wh\rho(0,x)$. Assumption~\eqref{ItECPHam} implies a uniform lower bound for $\sfH_p t$ near $\Gamma$, and thus ensures that null-bicharacteristics of $P$ on $\Gamma^u$ tend to $\pa M$. A crucial consequence of assumption~\eqref{ItECPMfdsInt2} is that
\begin{equation}
\label{EqECPHamPhius}
  \sfH_p\phi^u = -w^u\phi^u,\quad \sfH_p\phi^s = w^s\phi^s,
\end{equation}
in $\Sigma$, where $w^{u/s}-\bar w^{u/s}\in \cA_\cuop^\alpha(\Scu^*M)$ (or $\CI(\Scu^*M)$ in the smooth case). We can now state the main microlocal theorem of this paper:

\begin{thm}
\label{ThmECP}
  Suppose $E$ is equipped with a positive definite fiber metric so that
  \begin{equation}
  \label{EqECPSubpr}
    \sup_\Gamma\sfp_1 < \half\nu_{\rm min}, \quad \sfp_1 := \wh\rho^{m-1}\sigma_\cuop^{m-1}\Bigl(\frac{1}{2 i}(P-P^*)\Bigr).
  \end{equation}
  Then there exist operators $B_0,B_1,G\in\Psicu^0(M)$, with $\WFcu'(B_0),\WFcu'(B_1),\WFcu'(G)$ contained in any fixed neighborhood of $\Gamma$, such that $B_0$ is elliptic at $\Gamma$, while $\WFcu'(B_1)\cap\bar\Gamma^u=\emptyset$, and $G$ is elliptic near $\Gamma$, such that for all $s,r,N\in\R$, we have
  \begin{equation}
  \label{EqECP}
    \|B_0 v\|_{\Hcu^{s,r}} \leq C\bigl(\|B_1 v\|_{\Hcu^{s+1,r}} + \|G P v\|_{\Hcu^{s-m+2,r}} + \|v\|_{\Hcu^{-N,r}}\bigr)
  \end{equation}
  for some constant $C>0$. This holds in the strong sense that if $v\in\Hcu^{-\infty,r}$ and the quantities on the right are finite, then so is the quantity on the left, and the inequality holds.

  This estimate also holds for $P^*$ in place of $P$ and for suitable choices of $B_0,B_1,G$, where the assumption on $B_1$ is now $\WFcu'(B_1)\cap\Gamma^s=\emptyset$.
\end{thm}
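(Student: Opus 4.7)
The plan is to adapt the four-step sketch of \S\ref{SsES} to the cusp calculus. By conjugating $P$ by $\tau^r$---a cusp operator that preserves both the principal symbol and the subprincipal bound~\eqref{EqECPSubpr}---we reduce to the case $r=0$; the general estimate~\eqref{EqECP} follows by inverting the conjugation. All commutants are built from smooth cusp symbols times fixed functions of $\phi^{u/s}$; since $\phi^{u/s}-\bar\phi^{u/s}\in\cA_\cuop^\alpha$ and the non-smooth part of $P$ lies in $\tau^\alpha\Psi_\infty^m$, every non-smooth contribution to any commutator carries an extra factor of $\tau^\alpha$ and is thus absorbable into the a priori $\Hcu^{-N,0}$-error in~\eqref{EqECP}. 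Lemma~\ref{LemmaESBound} and the positive commutator machinery work verbatim in the bi-filtered algebra $\Psicu^\bullet(M;E)+\tau^\alpha\Psi_\infty^\bullet(M;E)$.

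The three technical ingredients are these. \emph{First}, the equation satisfied by the weak localization $\Phi^u v$, where $\Phi^u$ is a self-adjoint quantization of $\phi^u$. This is where the \emph{exact} invariance $\sfH_p\phi^u=-w^u\phi^u$ from~\eqref{EqECPHamPhius} enters: quantizing $\wh\rho^{-m+1}w^u$ by a self-adjoint $W^u\in\Psi_\infty^{m-1}$, we obtain
\[
(P-iW^u)\Phi^u v=\Phi^u P v+R v,\qquad R\in\Psi_\infty^{m-2}(M;E)+\tau^\alpha\Psi_\infty^{m-1}(M;E),
\]
and the imaginary part of $\sigma_\cuop^{m-1}(P-iW^u)$ equals $\sfp_1-w^u<-\tfrac12\nu_{\min}<0$ at $\Gamma$ by~\eqref{EqECPSubpr}. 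A positive commutator estimate with a commutant elliptic at $\Gamma$ and G\aa{}rding's inequality then yields, with $\|\cdot\|_{H^s([a,b])}$ denoting the $\Hcu^{s,0}$-norm of a microlocalization to the slab $\phi^s\in[a,b]$ (and analogous notation in $\phi^u$),
\[
\|\Phi^u v\|_{H^{s+1}\{|\phi^u|<\eps_0\}}\lesssim\|P v\|_{H^{s-m+2}}+\|v\|_{H^s\{|\phi^u|<2\eps_0\}}+\|v\|_{H^{s+1}\{|\phi^u|>\eps_0\}}.
\]
\emph{Second}, the quantitative propagation estimate~\eqref{EqESSkProp} transposed to the cusp setting: the proof of Figure~\ref{FigESComm} goes through verbatim, and it extends to nonzero subprincipal symbol as in Remark~\ref{RmkESSkew}. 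Apply it to $L=\Phi^u$ along $\wh\rho^{-1}H_{\phi^u}$, which is transversal to $\Gamma^s$ in $\Sigma$ by~\eqref{EqECPDefBdySymp}, with propagation length proportional to $\delta$:
\[
\|v\|_{H^s([-2\delta,2\delta])}\lesssim\delta^{1/2}\bigl(\|v\|_{H^s(\pm[2\delta,\eps_0])}+\|\Phi^u v\|_{H^{s+1}([-\eps_0,\eps_0])}\bigr)+\|v\|_{H^{s-1/2}}.
\]
\emph{Third}, apply it to $P$ itself along $\sfH_p$ inside $\Gamma^u$, where $\phi^s$ is expanded at rate $\bar w^s$; the propagation time needed to reach $|\phi^s|\sim\eps_0$ from $|\phi^s|\in[\delta,2\delta]$ is $T\sim(\bar w^s)^{-1}\log\delta^{-1}$, and the exponentially weighted version of the propagation estimate with $\gamma\geq\sup_\Gamma\sfp_1$ gives
\[
\|v\|_{H^s(\pm[2\delta,\eps_0])}\lesssim(\log\delta^{-1})^{1/2}\,\delta^{-\sfp_1/\bar w^s}\|v\|_{H^s(\pm[\delta,2\delta])}+(\log\delta^{-1})\|Pv\|_{H^{s-m+1}}+\text{l.o.t.}
\]

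Substituting these three estimates in sequence and choosing $\delta>0$ small, the coefficient of $\|v\|_{H^s([-2\delta,2\delta])}$ on the right becomes $\delta^{1/2}(\log\delta^{-1})^{1/2}\delta^{-\sfp_1/\bar w^s}$, which is strictly less than $1$ precisely because $\sfp_1<\tfrac12\bar w^s$ (a consequence of~\eqref{EqECPSubpr}); absorbing it produces~\eqref{EqECP} with $N=s-\tfrac12$. Any fixed $N$ is then reached by iterating with successively reduced Sobolev orders and slightly enlarged microsupports of $B_1$ and $G$. The \emph{strong} form---deducing rather than assuming $\Hcu^{s,r}$-membership of $B_0 v$---is obtained by running all positive commutator computations against the regularizer $J_\eps=\Op((1+\eps\wh\rho^{-1})^{-2})$; the required uniform $\Hcu^{s,0}$-boundedness of $[GP,J_\eps]$ comes from assumption~\eqref{ItECPWeight} that $H_{p_0}\wh\rho=0$ near $\Gamma$. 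The estimate for $P^*$ is obtained by the symmetric argument using $(\Phi^s,w^s,\Gamma^s)$ in place of $(\Phi^u,w^u,\bar\Gamma^u)$, noting that $\sfp_1$ flips sign under $P\mapsto P^*$ and that the analogous version of~\eqref{EqECPSubpr} applies. The main technical obstacle is the careful bookkeeping needed to ensure that all errors arising from the conormal perturbations $\phi^{u/s}-\bar\phi^{u/s}\in\cA_\cuop^\alpha$ and from the non-smooth part of $P$ carry the extra factor $\tau^\alpha$ (hence are of strictly lower order as $\tau\to 0$), which is what allows the smooth-coefficient argument of \S\ref{SsES} to survive intact in the cusp setting.
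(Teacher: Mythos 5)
Your proposal reproduces the paper's argument very closely: the same three-estimate architecture (the damped equation for $\Phi^u v$, quantitative propagation along $H_{\phi^u}$ off $\Gamma^s$, and quantitative propagation along $\sfH_p$ inside $\Gamma^u$), the same role for exact $\sfH_p$-invariance of $\Gamma^u$, and the same $\delta^{1/2}\cdot\delta^{-\beta}$ book-keeping that makes $\sup_\Gamma\sfp_1<\half\nu_{\min}$ the threshold. The reduction to $r=0$ by conjugating by $\tau^r$ is a clean alternative to the paper's choice of keeping $\tau^{-r}$ inside every commutant, and your observation that the shift in $\sfp_1$ caused by the conjugation vanishes at $\tau=0$ (since $H_p\tau\in\tau^2\CI$) is exactly why this works. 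A cosmetic difference: in your third estimate you keep the $(\log\delta^{-1})^{1/2}\delta^{-\sfp_1/\bar w^s}$ loss from the naive $\log$-scaled commutant of the \S\ref{SsES} sketch, whereas the paper inserts the weight $|\phi^s|^{-\beta}$ with $\sup_\Gamma\sfp_1/\nu_{\min}<\beta<\half$ into the Step~3 commutant, which removes the logarithm and cleanly parametrizes the loss by $\beta$; both lead to the same conclusion after absorbing into the $\delta^{1/2}$ gain.

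There is, however, one genuine gap. Your justification of the \emph{strong} form of~\eqref{EqECP}---concluding rather than assuming $B_0 v\in\Hcu^{s,r}$---invokes uniform $\Hcu^{s,0}$-boundedness of $[GP,J_\eps]$. This is the weak approximation argument the paper isolates in Remark~\ref{RmkESDefect}: apply the already-known estimate to $J_\eps v$ and control $[GP,J_\eps]v$, obtaining $B_0 J_\eps v$ uniformly bounded and then $B_0 v\in H^s$ by weak compactness. That argument requires $v$ to already lie (microlocally near $\Gamma$) in $H^s$, since $\|[GP,J_\eps]v\|_{H^{s-m+2}}\lesssim\|v\|_{H^s}$; it therefore does \emph{not} let you deduce $H^s$-regularity of $v$ at $\Gamma$ from a priori membership only in $\Hcu^{-\infty,r}$. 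As the paper states explicitly in that remark, the strong form must be proved by regularizing the positive commutator argument itself: replace $\check a$ by $\check a\,\varphi_\eta$ with $\varphi_\eta=(1+\eta\wh\rho^{-1})^{-1}$, check that the $H_p$-derivative hitting $\varphi_\eta$ is absorbable into $b_0$ because $\wh\rho^{-1}\sfH_p\wh\rho$ vanishes at $\tau=0$ (assumption~\eqref{ItECPWeight}), run the pairing at regularity $-N+\half$ where it is justified, and let $\eta\to 0$ using weak-$*$ compactness. Your invocation of $H_{p_0}\wh\rho=0$ is the right ingredient, but it needs to be fed into the commutant regularization, not into a bound on $[GP,J_\eps]$.

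A smaller point: for the adjoint statement, you write that ``$\sfp_1$ flips sign under $P\mapsto P^*$ and that the analogous version of~\eqref{EqECPSubpr} applies.'' What actually happens is that the sign flip of the skew-adjoint part is exactly compensated by the reversal of the propagation direction along $\sfH_p$ (one now propagates from $\bar\Gamma^u\setminus\Gamma$ backward into $\Gamma$), so the threshold is \emph{the same} condition~\eqref{EqECPSubpr}, not an analogue of it. In the cusp setting there is also a second, non-closed-manifold subtlety: for $P^*$ the $\tau$-direction of propagation is reversed, so the derivative of the time-cutoff $\chi_T(\tau)$ lands with a favorable sign, rather than requiring an a priori control term near $\supp\chi_T'$; this asymmetry between the $P$- and $P^*$-estimates is specific to the spacetime picture and worth noting.
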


Thus, we can propagate $\Hcu^{s,r}$-control from $\Sigma\setminus\bar\Gamma^u$ into a neighborhood of $\Gamma$, and from there by standard real principal type propagation, out along $\bar\Gamma^u$. The last statement concerns propagation in the opposite direction for the adjoint problem: control from $\Sigma\setminus\Gamma^s$ can be propagated into $\Gamma$ and out along $\Gamma^s$ (in particular, into $\tau>0$). See Figure~\ref{FigECP}.

\begin{figure}[!ht]
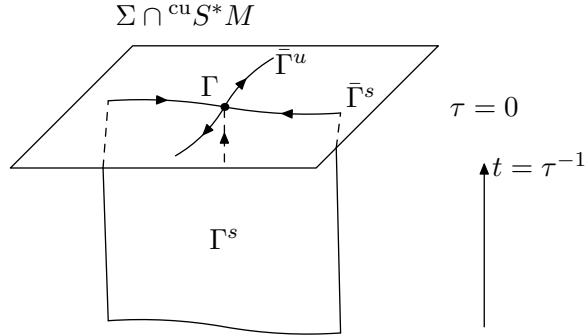

\centering
  \inclfig{FigECP}
  \caption{Illustration of the setup of Theorem~\ref{ThmECP}.}
\label{FigECP}
\end{figure}

\begin{rmk}
\label{RmkECPNhyptr}
  One can replace the weighted $\Hcu$-norms in the theorem by weighted $\Hb$-norms, with the same indices, on the exponential compactification
  \begin{equation}
  \label{EqECPNhyptrExp}
    M_e=[0,\infty)_{\tau_e}\times X,\quad \tau_e=e^{-t},
  \end{equation}
  of $M$, provided $P\in\Psib^m(M_e)$. Note that then $r$ is the order of an \emph{exponential} weight. One can reduce to the case $r=0$ (for which $\Hcu^{s,0}(M)=\Hb^{s,0}(M_e)$) by conjugating $P$ by the exponential $e^{-r t}$; this changes $\sfp_1$, and we therefore only obtain an estimate at $\Gamma$ on $\Hb^{s,r}$ when $r<r_0$ for some threshold $r_0\in\R$; in fact $r_0$ is positive when $P$ is symmetric to leading and subleading order, i.e.\ modulo $\Psib^{m-2}$. In this way, Theorem~\ref{ThmECP} extends \cite[Theorem~3.2]{HintzVasyNormHyp}, which takes place in the setting $P-P^*\in\Psib^{m-2}$, to a wider range of weights, in particular providing direct estimates on exponentially decaying function spaces which however become more lossy once one crosses the weight $r=0$.
\end{rmk}

\begin{rmk}
\label{RmkECPTame}
  The analogues of the observations in Remark~\ref{RmkESDefect} apply also here. Furthermore, with nonlinear applications in mind, we recall that positive commutator arguments, as used in the proof below, can be generalized easily to the case that $P$ has coefficients with high, but only finite regularity, with quantitative control of $C$ in~\eqref{EqECP} on their $H^s$-norm (roughly speaking); in fact, one can prove more precise \emph{tame estimates} as in~\cite{HintzQuasilinearDS,HintzVasyQuasilinearKdS}. Details will be discussed elsewhere.
\end{rmk}

\begin{proof}[Proof of Theorem~\usref{ThmECP}]
  \emph{We drop the bundle $E$ from the notation.} We shall first consider the case that $P\in\Psicu^m$ and work under the stronger assumption~\eqref{ItECPMfdsInt}. Denote by $\Phi^{u/s}=(\Phi^{u/s})^*\in\Psicu^0(M)$ quantizations of $\phi^{u/s}$. In order to not overburden the notation, we will re-use symbols for microlocal cutoffs and adjust their size to our needs as we proceed.

  Steps 1--4 of the proof follow the outline~\eqref{ItESSkEq}--\eqref{ItESSkFin} in the closed manifold setting. We first prove the estimate~\eqref{EqECP} for $v\in\CIdot(M)$, leaving the regularization argument giving the stronger statement to step 5. In step 6, we consider the general case~\eqref{ItECPMfdsInt2}; in step 7, we prove the adjoint estimate.

  \textit{\underline{Step 1:} equation and estimate for $\Phi^u v$.} Let $W^{u/s}\in\Psicu^{m-1}(M)$ denote quantizations of $\wh\rho^{-m+1}w^{u/s}$; then by~\eqref{EqECPHamPhius}, we have
  \[
    i[P,\Phi^u] = -W^u\Phi^u + i R_1 P + i R_2,\quad R_1\in\Psicu^{-1},\ R_2\in\Psicu^{m-2};
  \]
  the term $R_1 P$ arises because~\eqref{EqECPHamPhius} only controls the principal symbol of the commutator on the characteristic set, see also equation~\eqref{EqECPHpPhius} below. Therefore,
  \begin{equation}
  \label{EqECPPhiuv}
    P'v'=f',\quad P':=P-i W^u,\ v':=\Phi^u v,\ f' := (\Phi^u+R_1)P v + R_2 v.
  \end{equation}
  We record that $f'$ satisfies the estimate
  \begin{equation}
  \label{EqECPPhiuvForc}
    \|G f'\|_{\Hcu^{s-m+2,r}} \leq C\bigl(\|\wt G P v\|_{\Hcu^{s-m+2,r}} + \|\wt G v\|_{\Hcu^{s,r}} + \|v\|_{\Hcu^{s-1,r}}\bigr)
  \end{equation}
  for $G,\wt G\in\Psicu^0(M)$ with $\WFcu'(G)\subset\Ellcu(\wt G)$.

  We point out that this is the step which requires $\Gamma^u$---even though in the dynamical setting of Theorem~\ref{ThmMMap} and \ref{ThmMFlow} (where we consider backwards propagation, hence the roles of $\Gamma^s$ and $\Gamma^u$ are reversed relative to the present discussion) it is \emph{not} canonically defined---to be preserved by the $\sfH_p$ flow. Indeed, without arranging $\sfH_p\phi^u=0$ on $\Gamma^u$, one would get an error term from the commutator $[P,\Phi^u]$ which on $\Sigma$ does not factor through $\Phi^u$ and which thus, while decaying, has too high order in the differential sense to be absorbed later on: $f'$ would get an extra term $\tau\Psicu^{m-1}(M)v$, but since we need to estimate the $\Hcu^{s-m+2,r}$ norm of $f'$, this would necessitate control of the $\Hcu^{s+1,r-1}$ norm of $v$ near $\Gamma$, which is of higher order in the sense of differential orders than the $\Hcu^{s,r}$ control we will be able to get (having a weaker weight $r-1$ does not affect this issue).

  Noting that $\sfp_1':=\wh\rho^{m-1}\sigma\bigl(\frac{1}{2 i}(P'-(P')^*)\bigr)=\sfp_1-w^u$ on $\Sigma$, the assumption~\eqref{EqECPSubpr} implies that $\sfp_1'<-\half\nu_{\rm min}$ has a favorable sign at $\Gamma$. Quantitatively, letting $\sfp=\wh\rho^m p$, define the commutant
  \begin{equation}
  \label{EqECPPhiuComm}
    a = \check a^2,\quad \check a=\tau^{-r}\wh\rho^{-s-1+(m-1)/2}\chi^u\bigl((\phi^u)^2\bigr)\chi^s\bigl((\phi^s)^2\bigr)\chi_T(\tau)\chi_\Sigma(\sfp),
  \end{equation}
  where $\chi^{u/s},\chi_T,\chi_\Sigma\in\CIc(\R)$ have non-positive derivatives on $[0,\infty)$, and $\sqrt{-\chi^{u/s}(\chi^{u/s})'}$, $\sqrt{-\chi_T\chi_T'}\in\CI([0,\infty))$. We can choose the supports of the cutoffs so small that $\supp a$ is localized in any fixed neighborhood of $\Gamma$ inside $\Scu^*M$.
  
  With $\check A\in\tau^{-r}\Psicu^{s+1-(m-1)/2}(M)$ and $A=\check A^*\check A$ denoting quantizations of $\check a$ and $a$, we then consider the $L^2_\cuop$ pairing
  \begin{equation}
  \label{EqECPPhiuPair}
    \Im\la f',A v'\ra = \half i\bigl(\la A v',P'v'\ra-\la P'v',A v'\ra\bigr) = \Big\la\bigl(\half i[P',A]+\tfrac{P'-(P')^*}{2 i}A\bigr)v',v'\Big\ra.
  \end{equation}
  Writing
  \begin{equation}
  \label{EqECPHpPhius}
    \sfH_p\phi^u=-w^u\phi^u+\sfr^u\sfp,\quad
    \sfH_p\phi^s=w^s\phi^s+\sfr^s\sfp,
  \end{equation}
  and fixing $0<c<-\inf_\Gamma\sfp_1'$, we compute the principal symbol of the operator on the right as
  \begin{equation}
  \label{EqECPPhiuCommSymb}
  \begin{split}
    \check a H_p\check a + \wh\rho^{-m+1}\sfp_1'\check a^2 &= -c\wh\rho^{-m+1}\check a^2-(\wh\rho^{-s-1}b_0)^2-(\wh\rho^{-s-1}b^s)^2 \\
      &\hspace{8em} + (\wh\rho^{-s-1}b^u)^2 + (\wh\rho^{-s-1}b_T)^2 + h p,
  \end{split}
  \end{equation}
  where, writing $\chi=\chi^u\chi^s\chi_T\chi_\Sigma$, and $\wh{\chi^u}=\chi^s\chi_T\chi_\Sigma$ etc,
  \begin{align*}
    b_0 &= \tau^{-r}\chi\bigl(-\sfp_1'-c+r\tau(\tau^{-2}\sfH_p\tau) + (s+1-(m-1)/2)(\wh\rho^{-1}\sfH_p\wh\rho)\bigr)^{1/2}, \\
    b^{u/s} &= \tau^{-r}\wh{\chi^{u/s}}\phi^{u/s}\sqrt{-2 w^{u/s}\chi^{u/s}(\chi^{u/s})'}, \\
    b_T &= \tau^{-r}\wh{\chi_T}\sqrt{\tau\chi_T\chi_T'(\tau^{-2}\sfH_p\tau)}, \\
    h &= 2\tau^{-2 r}\wh\rho^{-2 s-2+m}\chi\bigl(\wh{\chi^u}(\chi^u)'\phi^u\sfr^u+\wh{\chi^s}(\chi^s)'\phi^s\sfr^s + m\wh{\chi_\Sigma}\chi_\Sigma'(\wh\rho^{-1}\sfH_p\wh\rho)\bigr);
  \end{align*}
  note here that $\tau^{-2}\sfH_p\tau=-\sfH_p t<0$ on $\Gamma$ by assumption~\eqref{ItECPHam}, and $\wh\rho^{-1}\sfH_p\wh\rho=o(1)$ as $\tau\to 0$ by assumption~\eqref{ItECPWeight}, hence all square roots are well-defined and smooth provided $\supp a$ is sufficiently close to $\Gamma$. Writing $B_0=\Op(b_0)$ etc, we thus find that
  \begin{align*}
    &c\|\check A v'\|_{\Hcu^{(m-1)/2}}^2  + \|B_0 v'\|_{\Hcu^{s+1,r}}^2 + \|B^s v'\|_{\Hcu^{s+1,r}}^2 \\
    &\qquad\leq \|B^u v'\|_{\Hcu^{s+1,r}}^2 + \|B_T v'\|_{\Hcu^{s+1,r}}^2 + |\la P'v',H v'\ra| \\
    &\qquad\qquad + \bigl(\|G v'\|_{\Hcu^{s+1/2,r}}^2 + C\|v'\|_{\Hcu^{s-1/2,r}}^2\bigr) + c\|\check A v'\|_{\Hcu^{(m-1)/2}}^2 + \tfrac{1}{2 c}\|\check A P' v'\|_{\Hcu^{-(m-1)/2}}
  \end{align*}
  for some constant $C\in\R$, where the terms in parentheses on the last line are due to the equality~\eqref{EqECPPhiuCommSymb} only concerning the \emph{principal} symbols; we keep a microlocalization $G\in\Psicu^0(M)$ near $\supp a$, which is estimated in a norm which is $\half$ weaker than the control afforded by $B_0$, to set up an iterative argument, given momentarily, for improving the error term to $\|v'\|_{\Hcu^{s-1/2,r}}^2$. Absorbing the second to last term into the left hand side, and simplifying by dropping the last term on the left, we obtain
  \begin{equation}
  \label{EqECPPhiuEst0}
    \|B_0\Phi^u v\|_{\Hcu^{s+1,r}} \leq \|B_1\Phi^u v\|_{\Hcu^{s+1,r}} + \|G f'\|_{\Hcu^{s-m+2,r}} + \|G\Phi^u v\|_{\Hcu^{s+1/2,r}} + C\|\Phi^u v\|_{\Hcu^{s-1/2,r}};
  \end{equation}
  here, $B_1\in\Psicu^0(M)$ is a suitable operator, microlocalized near $\Gamma$, which is elliptic on $\Ellcu(B^u)\cup\Ellcu(B_T)$ and satisfies $\WFcu'(B_1)\cap\bar\Gamma^u=\emptyset$. This estimate can be iterated for the term $G\Phi^u v$, i.e.\ replacing $B_0$ by $G$, and replacing the control terms on the right by ones with slightly larger microsupport. (Recall that our arguments can be localized arbitrarily closely to $\Gamma$.) Iterating twice, we can combine the resulting term $\|G''\Phi^u v\|_{\Hcu^{s+1/2-2\cdot 1/2,r}}$, with $\WFcu'(G'')$ near $\Gamma$, into the final term; estimating $G f'$ using~\eqref{EqECPPhiuvForc} gives
  \begin{equation}
  \label{EqECPPhiuEst}
    \|B_0\Phi^u v\|_{\Hcu^{s+1,r}} \leq C\bigl(\|B_1\Phi^u v\|_{\Hcu^{s+1,r}} + \|\wt G P v\|_{\Hcu^{s-m+2,r}} + \|\wt G v\|_{\Hcu^{s,r}} + \|v\|_{\Hcu^{s-1/2,r}}\bigr)
  \end{equation}
  upon slightly re-defining $B_0,B_1,\wt G$; their properties are: for some small fixed $\eps_0>0$,
  \[
    \cU_{3\eps_0} := \{ \tau,\,|\phi^u|,\,|\phi^s|,\,|\sfp|<3\eps_0 \} \subset \Ellcu(B_0),
  \]
  while $\WFcu'(B_1)$ is disjoint from an $\eps_0$-neighborhood of $\bar\Gamma^u$, and $\wt G,B_0,B_1$ are all microsupported in $\cU_{4\eps_0}$.

  \textit{\underline{Step 2:} quantitative propagation for $\Phi^u$.} We now estimate $v$ close to $\bar\Gamma^s$ as the solution of the equation $\Phi^u v=v'$ via a propagation estimate. We choose a new commutant
  \begin{equation}
  \label{EqECPPhiuPropSymb}
    a = \check a^2,\quad \check a=\tau^{-r}\wh\rho^{-s-1/2}\psi(\phi^s)\chi^u\bigl((\phi^u)^2\bigr)\chi_T(\tau)\chi_\Sigma(\sfp),
  \end{equation}
  where $\chi^u,\chi_T,\chi_\Sigma$ as before, identically $1$ in $[-\eps_0,\eps_0]$, and with their supports and $\supp\psi$ chosen small enough so that $\supp a\subset\cU_{3\eps_0}$. Concretely, for $0<\delta<\frac{\eps_0}{3}$ to be chosen later, we arrange the smooth function $\psi$ to satisfy $\supp\psi\subset[-4\delta,2\eps_0]$, $0\leq\psi\leq 1$, with $\psi'\geq 0$ in $\phi^s\leq 0$, $\psi(-3\delta)\geq\tfrac{1}{4}$, and $\psi'\geq\frac{1}{12\delta}$ on $[-3\delta,3\delta]$, while $\psi'\geq-\frac{1}{\eps_0}$ in $\phi^s\geq 0$. In $[-3\delta,3\delta]$, this implies $\psi\leq 12\delta\psi'$; we arrange for this to hold in all of $[-4\delta,3\delta]$.
  
  Split $\psi^2=\psi_-^2+\psi_+^2$, where $\psi_-=\psi$ in $\phi^s\leq 3\delta$ and $\supp\psi_-\subset[-4\delta,4\delta]$, and $0\leq\psi_+\leq 1$ has $\supp\psi_+\subset(3\delta,2\eps_0]$. We can then write
  \[
    \psi\psi' = \tfrac{1}{24\delta}\psi_-^2 + \tfrac{1}{24\delta}\wt b_\delta^2 - \wt e,
  \]
  where $\supp\wt e\subset(3\delta,2\eps_0]$, $|\wt e|\leq\frac{1}{\eps_0}$, and $\wt b_\delta\geq 0$, $\supp\wt b_\delta\subset[-4\delta,4\delta]$, and, crucially, $\wt b_\delta\geq\tfrac14$ in $[-3\delta,3\delta]$.\footnote{Smoothness of $\wt b_\delta$ is only an issue near the left boundary of its support (while near its right boundary we may simply cut $\wt b_\delta$ off smoothly in $[3\delta,4\delta]$); but this can be ensured by taking $\psi$ to be of the form $e^{-1/(\phi^s+4\delta)}$ in $\phi^s>-4\delta$ near $\phi^s=-4\delta$. This relies on the fact that for $f(x)=e^{-1/x}$ ($x>0$), one has $f'=x^{-2}f$, so e.g.\ $f f'-C f^2=(x^{-1}f\sqrt{1-C x^2})^2$ indeed has a smooth square root near $x=0$ since $x^{-1}f$ is smooth.} We define $\check a_\pm$ by replacing $\psi$ by $\psi_\pm$ in the definition~\eqref{EqECPPhiuPropSymb} of $\check a$, so $\check a^2=\check a_-^2+\check a_+^2$.

  By assumption~\eqref{EqECPDefBdySymp}, and shrinking $\eps_0$ if necessary (also in the construction of the commutant), we can choose $C_\phi>1$ so that
  \[
    C_\phi^{-1} \leq \sfH_{\phi^u}\phi^s \leq C_\phi\ \ \text{on}\ \ \cU_{3\eps_0},\qquad \sfH_{\phi^u}:=\wh\rho^{-1}H_{\phi^u}.
  \]
  Therefore,
  \begin{equation}
  \label{EqECPPhiuPropComm}
    \check a H_{\phi^u}\check a = \tfrac{1}{48 C_\phi\delta}\wh\rho\,\check a_-^2 + (\tau^{-r}\wh\rho^{-s}b_-)^2 + \tfrac{1}{24 C_\phi\delta}(\tau^{-r}\wh\rho^{-s}b_\delta)^2 + \tau^{-2 r}\wh\rho^{-2 s}e_1 - \tau^{-2 r}\wh\rho^{-2 s}e_2,
  \end{equation}
  where
  \begin{align*}
    b_- &= \psi_-\chi^u\chi_T\chi_\Sigma\bigl(\tfrac{1}{\sqrt{24\delta}}(\sfH_{\phi^u}\phi^s-\tfrac{1}{\sqrt 2}C_\phi^{-1})-r\tau(\tau^{-2}\sfH_{\phi^u}\tau)-(s+\half)(\wh\rho^{-1}\sfH_{\phi^u}\wh\rho)\bigr)^{1/2}, \\
    b_\delta &= \chi^u\chi_T\chi_\Sigma\wt b_\delta\sqrt{C_\phi(\sfH_{\phi^u}\phi^s)}, \\
    e_1 &= \psi^2(\chi^u)^2\bigl(\tau^2\chi_T\chi_T'\chi_\Sigma^2(\tau^{-2}\sfH_{\phi^u}\tau) + \chi_T^2\chi_\Sigma\chi_\Sigma'(\sfH_{\phi^u}\sfp)\bigr), \\
    e_2 &= (\chi^u)^2\chi_T^2\chi_\Sigma^2\Bigl((\sfH_{\phi^u}\phi^s)\wt e + \psi_+^2\bigl(r\tau(\tau^{-2}\sfH_{\phi^u}\tau)+(s+\half)(\wh\rho^{-1}\sfH_{\phi^u}\wh\rho)\bigr)\Bigr).
  \end{align*}
  The supports of $\chi^u$, $\chi_T$, and $\chi_\Sigma$ can be taken to be independent of $\delta$. Taking $\delta>0$ small ensures that the square roots in $b_-$ and $b_\delta$ are well-defined and smooth. The key point is then that
  \[
    b_\delta\geq\tfrac{1}{4}\ \ \text{on}\ \ \cU_{\eps_0}\cap\{-3\delta\leq\phi^s\leq 3\delta\},
  \]
  while the a priori control term $e_2$ has support in $\cU_{3\eps_0}$ and satisfies
  \[
    \phi^s>3\delta\ \ \text{on}\ \ \supp e_2,\quad
    |e_2|\leq C_\phi\eps_0^{-1}+C'
  \]
  for some $\delta$-independent constant $C'$. The error term $e_1$ is the sum of a piece with support in $\tau>0$ and a piece supported away from the characteristic set of $P$; both places are away from $\bar\Gamma^u$ and hence we have a priori control there.

  Quantizing these symbols, with the corresponding operators denoted by upper case letters, evaluation of the pairing $\Im\la v',A v\ra=\la \half i[\Phi^u,A]v,v\ra$ gives
  \begin{align*}
    &\tfrac{1}{48 C_\phi\delta}\|\check A_- v\|_{\Hcu^{-1/2,0}}^2 + \|B_-v\|_{\Hcu^{s,r}}^2 + \tfrac{1}{24 C_\phi\delta}\|B_\delta v\|_{\Hcu^{s,r}}^2 \\
    &\qquad\leq |\la E_1 v,v\ra| + |\la E_2 v,v\ra| + |\la\check A_-v',\check A_-v\ra| + |\la\check A_+ v',\check A_+ v\ra| + \wt C\|v\|_{\Hcu^{s-1/2,r}}^2 \\
    &\qquad\leq |\la E_2 v,v\ra| + \|B_1 v\|_{\Hcu^{s,r}}^2 + \tfrac{1}{48 C_\phi\delta}\|\check A_-v\|^2_{\Hcu^{-1/2,0}} + 24 C_\phi\delta\|\check A_-\Phi^u v\|_{\Hcu^{1/2,0}}^2 \\
    &\qquad\qquad + \tfrac{1}{2\eps_0}\|\check A_+ v\|_{\Hcu^{-1/2,0}}^2 + \eps_0\|\check A_+\Phi^u v\|_{\Hcu^{1/2,0}}^2 + \wt C\|v\|_{\Hcu^{s-1/2,r}}^2,
  \end{align*}
  where $B_1\in\Psicu^0(M)$ (with implicit dependence on $\delta$) is elliptic on $\supp e_1$ and has wave front set disjoint from an $\eps_0$-neighborhood of $\bar\Gamma^u$; the constant $\wt C$ depends on $\delta$ and arises from the fact that~\eqref{EqECPPhiuPropComm} only captures \emph{principal} symbols. The third term on the right can be absorbed into the left hand side, and the second term on the left can be dropped. In view of the bounds on the symbols, and using G\aa{}rding's inequality in the form of Lemma~\ref{LemmaESBound}, we obtain, after overall multiplication by $24 C_\phi\delta$, the estimate (cf.\ the estimate~\eqref{EqESSkProp})
  \begin{equation}
  \label{EqECPPhiuProp}
  \begin{split}
    \|B_\delta v\|_{\Hcu^{s,r}} &\leq C\bigl(\sqrt{\delta/\eps_0}\|\wt E_\delta v\|_{\Hcu^{s,r}} + \|B_1 v\|_{\Hcu^{s,r}} \\
      &\qquad\qquad + (\delta+\sqrt{\delta\eps_0})\|G\Phi^u v\|_{\Hcu^{s+1,r}} + \wt C\|v\|_{\Hcu^{s-1/2,r}} \bigr),
  \end{split}
  \end{equation}
  where $C$ is a $\delta$-independent constant; here, $\wt E_\delta\in\Psicu^0(M)$ controls the terms involving $E_2 v$ and $\check A_+ v$, has principal symbol bounded from above by $1$, and has $\phi^s>3\delta$ on $\WFcu'(\wt E_\delta)$; and
  \begin{equation}
  \label{EqECPPhiuPropBdelta}
    B_\delta\in\Psicu^0(M),\quad
    \sigma(B_\delta)\geq 1\ \ \text{on}\ \ \cU_{\eps_0}\cap\{-3\delta\leq\phi^s\leq 3\delta\}.
  \end{equation}
  The operator $G\in\Psicu^0(M)$ is microsupported in $\cU_{3\eps_0}$. A similar estimate holds in $\phi^s<0$.
  
  Let us now appeal to the estimate~\eqref{EqECPPhiuEst} (which is of course independent of $\delta$), with the present $G$ taking the place of $B_0$ there. Its third term gives a contribution $C(\delta+\sqrt{\delta\eps_0})\|\wt G v\|_{\Hcu^{s,r}}$ in~\eqref{EqECPPhiuProp}, where we may assume that $|\sigma(\wt G)|\leq 1$ and that $C$ is independent of $\delta$. But then we can bound
  \begin{equation}
  \label{EqECPPhiuProp2}
    \|\wt G v\|_{\Hcu^{s,r}}\leq 2\|B_\delta v\|_{\Hcu^{s,r}} + 2\|E_\delta v\|_{\Hcu^{s,r}} + \wt C\|B_1 v\|_{\Hcu^{s,r}} + \wt C\|v\|_{\Hcu^{s-1/2,r}},
  \end{equation}
  where $E_\delta\in\Psicu^0(M)$ satisfies $|\sigma(E_\delta)|\leq 1$,
  \begin{alignat*}{4}
    &\sigma(E_\delta)=1&& \ \text{on}\ & \{ 3\delta\leq\phi_s\leq 4\eps_0,\ &|\phi^u|\leq\eps_0,\ &&|\sfp|\leq\eps_0 \}, \\
    &\WFcu'(E_\delta)&& \,\subset & \{ \tfrac52 \delta\leq\phi_s\leq 5\eps_0,\ &|\phi^u|\leq 2\eps_0,\ &&|\sfp|\leq 2\eps_0 \}.
  \end{alignat*}
  The third term $\|B_1 v\|$, takes care of $\wt G v$ more than $\eps_0$ away from $\bar\Gamma^u$. Choosing $\delta>0$ small so that $2 C(\delta+\sqrt{\delta\eps_0})\leq\half$, we can absorb the first term in~\eqref{EqECPPhiuProp2} into the left hand side of~\eqref{EqECPPhiuProp}. Dropping irrelevant $\delta$-dependencies and fixing $\eps_0$, we conclude that
  \begin{equation}
  \label{EqECPPhiuPropEst}
    \|B_\delta v\|_{\Hcu^{s,r}} \leq C\bigl(\delta^{1/2}\|E_\delta v\|_{\Hcu^{s,r}} + \|B_1 v\|_{\Hcu^{s+1,r}} + \|\wt G P v\|_{\Hcu^{s-m+2,r}} + \wt C\|v\|_{\Hcu^{s-1/2,r}}\bigr).
  \end{equation}
  We summarize the key features: $C,\wt C$ are constants, with $C$ (but not $\wt C$) independent of $\delta$; $B_\delta$ is as in~\eqref{EqECPPhiuPropBdelta}, while $\WFcu'(B_1)$ is disjoint from an $\eps_0$-neighborhood of $\bar\Gamma^u$; and $\WFcu'(\wt G)\subset\cU_{4\eps_0}$. (We give ourselves some extra room compared to the sketch of the argument in~\S\ref{SsEC}.)

  \textit{\underline{Step 3:} quantitative propagation for $P$.} We now use quantitative real principal type propagation for $P$ to estimate $\|E_\delta v\|_{\Hcu^{s,r}}$ by $\|B_\delta v\|_{\Hcu^{s,r}}$ in~\eqref{EqECPPhiuPropEst}. The two qualitative differences to the previous step are \begin{enumerate*} \item the repelling nature of $\sfH_p$ at $\Gamma^s$, necessitating a different time scale $\log\delta^{-1}$ for propagation from $\phi^s=\delta$ to $\phi^s=\eps_0$, and \item the presence of the subprincipal part $\sfp_1$ of $P$, leading to exponential (in the propagation time) growth of the constants in the propagation estimate. \end{enumerate*} Thus, fixing $\beta\in\R$ subject to
  \begin{equation}
  \label{EqECPPPropBeta}
    \max\Bigl\{\frac{\sup_\Gamma\sfp_1}{\nu_{\rm min}},0\Bigr\}<\beta<\half,
  \end{equation}
  we consider a commutant of the form
  \begin{equation}
  \label{EqECPPPropComm}
    a = \check a^2,\quad \check a=\tau^{-r}\wh\rho^{-s+(m-1)/2}|\phi^s|^{-\beta}\psi\bigl(\log|\phi^s/\delta|\bigr)\chi^u\bigl((\phi^u)^2\bigr)\chi_T(\tau)\chi_\Sigma(\sfp),
  \end{equation}
  where $\chi^u,\chi_T,\chi_\Sigma$ are cutoffs as above and chosen to be identically $1$ on $[-3\eps_0,3\eps_0]$, while $\psi\in\CIc\bigl((0,\log(6\eps_0\delta^{-1}))\bigr)$, which in particular cuts out the singularity of $|\phi^s|^\beta$ at $\phi^s=0$. Since the weight $|\phi^s|^{-\beta}$ will give positivity, as we explain below, we may choose $\psi$ less carefully than above (and with various sign switches, as the direction of propagation is now \emph{away} from $\Gamma^s$).\footnote{In part~\eqref{ItESSkLog} of the proof sketch in~\S\ref{SsEC}, we did not exploit the freedom of inserting a weight $|\phi^s|^{-\beta}$, and instead relied on quantitative decay of $\psi$ on $[\log\tfrac94,\log(2\eps_0\delta^{-1}]$ at a rate $\psi'\sim 1/\log\delta^{-1}$, giving in~\eqref{EqECPPPropSymb} a contribution of the same sign as $b_{\ell,+}$ but by a factor $1/\log\delta^{-1}$ smaller than the fixed size control afforded by $\beta>0$.} Namely, we arrange $0\leq\psi\leq 1$, further $\psi'\leq 1$ on $[0,\log\tfrac94]$, and $\psi'<0$ and $\psi\geq\tfrac14$ on $[\log\tfrac94,\log(5\eps_0\delta^{-1})]$. We may then further arrange
  \[
    \psi\psi' = -\wt b_\ell^2 + \wt e,
  \]
  where $\supp\wt e\subset[0,\log\tfrac94)$, $|\wt e|\leq 1$, while $\wt b_\ell\geq 0$, with $\supp\wt b_\ell\subset[\log\tfrac32,\log(6\eps_0\delta^{-1}))$, is smooth. We again write $\psi^2=\psi_-^2+\psi_+^2$, where now $\psi_+=\psi$ on $[\log\tfrac94,\log(6\eps_0\delta^{-1})]$, and $0\leq\psi_-\leq 1$ has $\supp\psi_-\subset[0,\log\tfrac94)$.
  
  Denote by $\check A$ and $A=\check A^*\check A$ quantizations of $\check a$ and $a$, respectively. Similarly to~\eqref{EqECPPhiuPair}, we then consider the commutator
  \[
    \Im\la P v,A v\ra = \Big\la\bigl(\half i[P,A]+\tfrac{P-P^*}{2 i}A\bigr)v,v\Big\ra.
  \]
  Fixing $c\in(0,\beta\nu_{\rm min}-\sup_\Gamma\sfp_1)$, we then calculate, using~\eqref{EqECPHpPhius},
  \begin{equation}
  \label{EqECPPPropSymb}
    \check a H_p\check a+\wh\rho^{-m+1}\sfp_1\check a^2 = -c\wh\rho^{-m+1}\check a^2 - (\tau^{-r}\wh\rho^{-s}b_{\ell,-})^2 - (\tau^{-r}\wh\rho^{-s}b_{\ell,+})^2 + e_1 + e_2 + h p,
  \end{equation}
  where now
  \begin{align*}
    b_{\ell,\pm} &= |\phi^s|^{-\beta}\psi_\pm\chi^u\chi_T\chi_\Sigma\bigl(\beta w^s-\sfp_1-c \\
      &\qquad\qquad + r\tau(\tau^{-2}\sfH_p\tau)+(s-(m-1)/2)(\wh\rho^{-1}\sfH_p\wh\rho) + w^s\wt b_\ell^2\bigr)^{1/2}, \\
    e_1 &= \tau^{-2 r}\wh\rho^{-2 s}|\phi^s|^{-2\beta}\psi^2\chi_\Sigma^2\bigl(2(\phi^u\sfH_p\phi^u)\chi^u(\chi^u)'\chi_T^2 + (\tau^{-2}\sfH_p\tau)\tau^2(\chi^u)^2\chi_T\chi_T'\bigr), \\
    e_2 &= \tau^{-2 r}\wh\rho^{-2 s}|\phi^s|^{-2\beta}w^s\wt e(\chi^u)^2\chi_T^2\chi_\Sigma^2, \\
    h &= \tau^{-2 r}\wh\rho^{-2 s+m}|\phi^s|^{-2\beta}\psi^2(\chi^u)^2\chi_T^2\bigl((-\beta\sfr^s+\psi\psi'\tfrac{\sfr^s}{\phi^s})\chi_\Sigma^2 + m(\wh\rho^{-1}\sfH_p\wh\rho)\chi_\Sigma\chi_\Sigma'\bigr).
  \end{align*}
  Here, the weight $|\phi^s|^{-\beta}=e^{-\beta\log|\phi^s|}$ in $\check a$, which is exponentially decreasing along the $\sfH_p$ flow near $\bar\Gamma^u$, means that we are giving up control as we get farther away from $\Gamma^s$; this counteracts the allowed growth of amplitudes caused by the subprincipal symbol $\sfp_1$ of $P$. Furthermore, $e_2$, which has a quantitative bound coming from that of $\wt e$, captures errors near $\bar\Gamma^s$, which is where we propagate from; the term $e_1$ captures terms away from $\bar\Gamma^u$, where we have a priori control. The main term in this calculation in $b_{\ell,+}\geq 0$, which gives control away from $\Gamma^s$. Concretely, we have uniform lower and upper bounds
  \begin{equation}
  \label{EqECPPPropSymbBounds}
  \begin{aligned}
    &b_{\ell,+}\geq c'&\ \ \text{on}\ \ &\cU_{3\eps_0}\cap \{|\phi^s|\in[\tfrac94\delta,5\eps_0]\}, \\
    &|e_2|\leq C'\tau^{-2 r}\wh\rho^{-2 s}\delta^{-2\beta}&\ \ \text{on}\ \ &\supp e_2\subset\cU_{4\eps_0}\cap \{ |\phi^s|\in[\delta,\tfrac94\delta) \}
  \end{aligned}
  \end{equation}
  for some $\delta$-independent constants $c',C'>0$.
  
  We now proceed as usual by quantizing the relation~\eqref{EqECPPPropSymb}, obtaining
  \begin{align*}
    &c\|\check A v\|_{\Hcu^{(m-1)/2,0}}^2 + \|B_{\ell,+}v\|_{\Hcu^{s,r}}^2 + \|B_{\ell,-}v\|_{\Hcu^{s,r}}^2 \\
    &\qquad\leq |\la E_1 v,v\ra| + |\la E_2 v,v\ra| + |\la P v,H v\ra| + |\la\check A P v,\check A v\ra| + \wt C\|v\|_{\Hcu^{s-1/2,r}}^2 \\
    &\qquad\leq |\la E_1 v,v\ra| + |\la E_2 v,v\ra| + \|\wt G'P v\|_{\Hcu^{s-m+1,r}}^2 +\wt C\|\wt G'v\|_{\Hcu^{s-1,r}}^2 \\
    &\qquad\qquad\qquad + c\|\check A v\|_{\Hcu^{(m-1)/2,0}}^2 + \tfrac{1}{2 c}\|\check A P v\|_{\Hcu^{-(m-1)/2,0}}^2 + \wt C\|v\|_{\Hcu^{s-1/2,r}}^2,
  \end{align*}
  where $\wt G'\in\Psicu^0(M)$ is elliptic near $\supp a$ and has wave front set contained in a small neighborhood thereof, and $\wt C$ is a $\delta$-dependent constant. We can absorb the fifth term on the right into the left hand side, drop the third term on the left, and combine the $\wt G'v$ term with the last, error, term. We estimate $|\la E_2 v,v\ra|$ by means of G\aa{}rding's inequality and~\eqref{EqECPPPropSymbBounds}. Using the operators in~\eqref{EqECPPhiuPropEst}, this gives the estimate
  \begin{equation}
  \label{EqECPPPropEst}
    \|E_\delta v\|_{\Hcu^{s,r}} \leq C\bigl(\delta^{-\beta}\|B_\delta v\|_{\Hcu^{s,r}} + \|\wt B_1 v\|_{\Hcu^{s,r}} + \wt C\|\wt G'P v\|_{\Hcu^{s-m+1,r}} + \wt C\|v\|_{\Hcu^{s-1/2,r}}\bigr)
  \end{equation}
  for some $\delta$-independent constant $C$, while $\wt C$ may depend on $\delta$; here, $\wt B_1$ is a quantization of a symbol $\wt b_1$ which is elliptic in the complement of an $\half\eps_0$-neighborhood of $\bar\Gamma^u$ within $\cU_{6\eps_0}$, and with $\WFcu'(\wt B_1)$ disjoint from an $\tfrac14\eps_0$-neighborhood of $\bar\Gamma^u$; this takes care of those parts of $|\la E_1 v,v\ra|$ which lie outside the elliptic set of $B_\delta$ and are thus away from $\bar\Gamma^u$.

  \textit{\underline{Step 4:} combining the estimates.} Plugging the estimate~\eqref{EqECPPPropEst} into~\eqref{EqECPPhiuPropEst}, we can absorb the resulting term $\delta^{1/2-\beta}\|B_\delta u\|$ (with $\beta<\half$ by~\eqref{EqECPPPropBeta}) into the left hand side of~\eqref{EqECPPhiuPropEst} upon fixing $\delta>0$ sufficiently small, giving the estimate
  \begin{equation}
  \label{EqECPTogether}
    \|B_\delta v\|_{\Hcu^{s,r}} \leq \wt C\bigl(\|\wt B_1 v\|_{\Hcu^{s+1,r}} + \|\wt G'P v\|_{\Hcu^{s-m+2,r}} + \|v\|_{\Hcu^{s-1/2,r}}\bigr),
  \end{equation}
  which controls $v$ in a $\delta$-neighborhood of $\Gamma$. This finishes the proof of the estimate~\eqref{EqECP} for $N=s-\half$; the case of general $N$ follows as in~\S\ref{SsEC} by applying~\eqref{EqECP} to a microlocalized version of $v$ and iterating the estimate for the resulting microlocalized error term.

  \textit{\underline{Step 5:} regularization.} The pairings and integrations by parts in the above argument are not justified if we only assume the terms on the right in~\eqref{EqECP} to be finite. This is easily remedied by a standard regularization argument: in the first step, we replace the commutant $\check a$ in~\eqref{EqECPPhiuComm} by $\check a_\eta=\check a \varphi_\eta$, where $\varphi_\eta(\wh\rho):=(1+\eta\wh\rho^{-1})^{-1}$, $\eta>0$. In the commutator calculation~\eqref{EqECPPhiuCommSymb}, the $H_p$-derivative falling on $\varphi_\eta$ can be absorbed by the main term $b_0$ in view of
  \[
    \varphi_\eta\sfH_p\varphi_\eta = \tfrac{\eta}{\eta+\wh\rho}\varphi_\eta^2 (\wh\rho^{-1}\sfH_p\wh\rho), \quad \bigl|\tfrac{\eta}{\eta+\wh\rho}\bigr|\leq 1,
  \]
  and the vanishing of $\wh\rho^{-1}\sfH_p\wh\rho$ at $\tau=0$; that is, this term can be made arbitrarily small by localizing close enough to $\tau=0$. For $\eta>0$, one can then quantize $a_\eta=\check a_\eta^2$, which for $\eta>0$ is a symbol of $2$ orders less, and proceed as written (starting with $s=-N+\half$); this gives an estimate of the form~\eqref{EqECPPhiuEst0} with $B_0$ etc.\ replaced by operators $B_{0,\eta}$ (quantizing $\varphi_\eta\wh\rho^{s+1}b_0$). Taking $\eta\to 0$ and using a weak-*-compactness argument (see \cite{VasyMinicourse} or \cite[Proof of Proposition~7 and \S9]{MelroseEuclideanSpectralTheory}) then implies that $B_0\Phi^u v\in\Hcu^{s+1,r}$ and the estimate~\eqref{EqECPPhiuEst0} holds. One argues similarly for steps 2--4, where one combines the regularized estimates from steps 2 and 3 to obtain a regularized version of~\eqref{EqECPTogether} at which point one takes the regularization parameter to $0$ to conclude. One obtains~\eqref{EqECP} for general $N$ by the usual inductive argument.

  \textit{\underline{Step 6:} relaxing the regularity requirements.} Under the weaker assumption~\eqref{ItECPMfdsInt2}, the proof goes through with only minor changes. Indeed, the positivity of the Poisson brackets in the above positive (or negative) commutator arguments near $\tau=0$ is preserved upon adding $\cO(\tau^\alpha)$ errors, provided one localizes in a sufficiently small neighborhood of $\tau=0$. Quantizations of unweighted zeroth order symbols now lie in $\Psicu^0+\tau^\alpha\Psi_\infty^0$, with the orders of both summands shifted by the same amounts for weighted symbols of general order. If one defines the elliptic set of an operator $A=A_0+\wt A$, $A_0\in\Psicu^0$, $\wt A\in\tau^\alpha\Psi_\infty^0$ at $\tau=0$ to be equal to $\Ellcu(A_0)$, microlocal elliptic regularity holds by the usual proof (inverting the principal symbol $\sigma(A)=\sigma(A_0)+\sigma(\wt A)$), hence the positivity of symbols gives microlocal control of $v$ just as in the smooth setting.

  \textit{\underline{Step 7:} proof of the final statement (backward propagation).} The estimate~\eqref{EqECP} for $P^*$ in place of $P$, and with $B_1$ microlocalized away from $\Gamma^s$, is proved by a completely analogous argument. The key differences are:
  \begin{enumerate*} 
  \item the roles of $\phi^u$ and $\phi^s$ (and correspondingly $w^u$ and $w^s$) are switched;
  \item passing to $P^*$ switches the sign of the imaginary part of its subprincipal symbol, but we must now propagate \emph{backwards} along the $\sfH_p$ flow, as we assume a priori control on $\bar\Gamma^u\setminus\Gamma$ and want to propagate this into $\Gamma$---hence the threshold condition for step 3 above is still given by~\eqref{EqECPSubpr};
  \item the ${-}\sfH_p$-derivative of the time cutoff $\chi_T(\tau)$ now has a \emph{favorable} sign, as we are now propagating estimates in the direction of increasing $\tau$---that is, we do not need to place a priori assumptions on $\supp\chi_T'$.\qedhere
  \end{enumerate*}
\end{proof}

\section{Applications to wave equations on asymptotically Kerr spacetimes}
\label{SK}

For black hole masses $\bhm>0$ and subextremal angular momenta $\bha\in\R$, $|\bha|<\bhm$, we consider the Kerr metric in Boyer--Lindquist coordinates,
\begin{equation}
\label{EqKMetric}
\begin{gathered}
  g_{\bhm,\bha} = \frac{\Delta}{\rho^2}\bigl(d t-\bha\sin^2\theta\,d\varphi\bigr)^2 - \rho^2\Bigl(\frac{d r^2}{\Delta}+d\theta^2\Bigr) - \frac{\sin^2\theta}{\rho^2}\bigl(\bha\,d t-(r^2+\bha^2)d\varphi\bigr)^2, \\
  \rho^2=r^2+\bha^2\cos^2\theta,\qquad
  \Delta=r^2-2\bhm r+\bha^2,
\end{gathered}
\end{equation}
as a stationary Lorentzian metric of signature $(+,-,-,-)$ on
\[
  M^\circ := \R_t \times (r_+,\infty)_r \times \Sph^2,
\]
using spherical coordinates on $\Sph^2$, and where $r_+:=\bhm+\sqrt{\bhm^2-\bha^2}$ is the radius of the event horizon. Denote by $G_{\bhm,\bha}=g_{\bhm,\bha}^{-1}$ the dual metric function. Partially compactifying $M^\circ$ as
\begin{equation}
\label{EqKCpt}
  M := \Bigl(M^\circ \sqcup \bigl([0,\infty)_\tau\times(r_+,\infty)\times\Sph^2\bigr) \Bigr) / \sim,
  \quad
  (t,r,\omega) \sim (\tau=t^{-1},r,\omega)\ \text{for}\ t>0,
\end{equation}
the discussion in~\S\ref{SssECC} implies that $g_{\bhm,\bha}\in\CI(M;S^2\,\Tcu^*M)$ is a smooth cusp metric. Denote the characteristic set by
\[
  \Sigma := G_{\bhm,\bha}^{-1}(0)\subset\Tcu^*M\setminus o;
\]
it has two components, $\Sigma=\Sigma^+\sqcup\Sigma^-$, where $\Sigma^\pm=\{\varpi\in\Sigma\colon\pm G_{\bhm,\bha}(\varpi,d t)>0\}$. Identifying $\Sigma$ with its closure inside $\ol{\Tcu^*}M\setminus o$, let
\begin{equation}
\label{EqKCharInfty}
  \cX_\vee := \Sigma\cap \Tcu^*_{\pa M}M, \quad
  \cX := \pa\cX_\vee = \Sigma\cap\Scu^*_{\pa M}M.
\end{equation}
denote the characteristic set at future infinity (`$\vee$' indicating its conic version, rather than its boundary at fiber infinity $\cX$).

The Hamilton flow $H_{G_{\bhm,\bha}}$ restricted to $\Sigma$ has a trapped set in $r>r_+$ at which the flow is $r$-normally hyperbolic for every $r$ (the latter $r$ being an integer, not the radius function!). This was first proved by Wunsch and Zworski~\cite{WunschZworskiNormHypResolvent} for $|\bha|\ll\bhm$, and extended to the full subextremal range by Dyatlov~\cite{DyatlovWaveAsymptotics}, see also \cite{DyatlovZworskiTrapping}. (The Kerr--de~Sitter case was discussed by Vasy \cite{VasyMicroKerrdS}.) This is usually phrased as a condition on the $t$-independent spacetime trapped set (though all geodesics in the spacetime trapped set escape to $t=\infty$, hence trapping really only occurs there). Concretely, define
\begin{equation}
\label{EqKGammaInfty}
  \Gamma_\vee := \bigl\{ \varpi \in \cX_\vee \colon H_{G_{\bhm,\bha}}r=(H_{G_{\bhm,\bha}})^2 r=0\ \text{at}\ \varpi \bigr\},\quad
  \Gamma := \pa\Gamma_\vee \subset \cX.
\end{equation}
As shown in the references, $\Gamma_{(\vee)}$ is a smooth codimension $2$ submanifold of $\cX_{(\vee)}$, and $H_{G_{\bhm,\bha}}$ is tangent to $\Gamma_\vee$. Write
\[
  \Gamma^\pm:=\Gamma\cap\Sigma^\pm.
\]

We also recall that the fiber-linear function $\sigma=\pa_t(\cdot)\in\CI(\Tcu^*M)$ (which was already defined in~\eqref{EqECCTimeMomentum}) is non-zero on $\Gamma_\vee$; indeed, $\pm\sigma>0$ on $\Gamma_\vee^\pm=\Gamma_\vee\cap\Sigma^\pm$. Thus, $|\sigma|^{-1}$ is a smooth function, homogeneous of degree $-1$, near $\Gamma_\vee$, and
\begin{equation}
\label{EqKHamFbdf}
  H_{G_{\bhm,\bha}}\wh\rho=0,\quad \wh\rho:=|\sigma|^{-1},
\end{equation}
since the Kerr metric is stationary. Furthermore, $H_{G_{\bhm,\bha}}t\neq 0$ on $\Gamma_\vee$, hence the rescaled vector field
\[
  \sfH := \tfrac{1}{H_{G_{\bhm,\bha}}t}H_{G_{\bhm,\bha}},\quad \sfH t=1,
\]
induces a vector field tangent to $\Gamma$; and $\sfH$, or rather its projection to a cusp vector field on $M$ over $\pa M$, is \emph{future} null. The following is a reformulation of the results of~\cite[\S3.2]{DyatlovWaveAsymptotics}:
\begin{prop}
\label{PropKNhyp}
  The $\sfH$-flow within $\cX$ is eventually absolutely $r$-normally hyperbolic at $\Gamma$ for every $r$. The stable and unstable manifolds $\bar\Gamma^{u/s}$ at $\Gamma$ are smooth and orientable, of codimension $1$ within $\cX$, intersect transversally at $\Gamma$, and admit defining functions $\bar\phi^{u/s}\in\CI(\cX)$ which satisfy the non-degeneracy conditions~\eqref{EqECPDefBdyHyp}--\eqref{EqECPDefBdyNumin}.
\end{prop}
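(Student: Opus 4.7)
The plan is to reduce the statement to the results of Wunsch--Zworski \cite{WunschZworskiNormHypResolvent} (small $|\bha|/\bhm$) and Dyatlov \cite{DyatlovWaveAsymptotics} (full subextremal range) on the trapped set of the spacetime geodesic flow, and then transfer them from the spacetime setting to the cusp-cosphere bundle setting via the natural quotient by the stationary and scaling symmetries.

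First I would exploit two symmetries of the symbol. Stationarity, $\cL_{\pa_t}G_{\bhm,\bha}=0$, implies that $H_{G_{\bhm,\bha}}$ commutes with $\pa_t$; fiber-homogeneity of $G_{\bhm,\bha}$ of degree $2$ implies that $H_{G_{\bhm,\bha}}$ is homogeneous of degree $1$ and in particular commutes with the fiber scaling generator. Thus the reduction of $(\Sigma\setminus o)/\R_+$ by $\R_t$-translation is a smooth manifold, and the rescaled vector field $\sfH$ descends to a well-defined flow on it. Using~\eqref{EqKHamFbdf}, one sees that $\cX\subset\Scu^*_{\pa M}M$, as defined in~\eqref{EqKCharInfty}, is canonically identified with this reduced quotient, and the $\sfH$-flow on $\cX$ is identified with the reduced flow. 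Under this identification $\Gamma$ from~\eqref{EqKGammaInfty} corresponds exactly to the projection of the classical spacetime trapped set.

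Next, I would transfer Dyatlov's analysis \cite[\S3.2]{DyatlovWaveAsymptotics}. In Boyer--Lindquist separated form, geodesic motion in $r$ is governed by an effective potential with a unique non-degenerate maximum at some $r=r_{\rm c}$ (depending on the conserved angular momentum and Carter constant), giving the trapped set in the reduced phase space. Dyatlov shows the linearized flow at the trapped set is hyperbolic in the two normal directions with matching positive rates, and that the minimal normal expansion rate strictly dominates any power of the maximal tangential expansion rate along $\Gamma$, which is precisely eventual absolute $r$-normal hyperbolicity for every $r$. Hirsch--Pugh--Shub then yields smooth $\bar\Gamma^{u/s}\subset\cX$ of codimension $1$, with $T_\Gamma\bar\Gamma^{u/s}=T\Gamma\oplus\bar N^{u/s}$; transversality of $\bar\Gamma^u$ and $\bar\Gamma^s$ at $\Gamma$ follows from the normal-bundle splitting $N_\Gamma\cX=\bar N^u\oplus\bar N^s$. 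Orientability of $\bar\Gamma^{u/s}$ is inherited from the one-dimensionality and orientability of these normal line bundles.

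Third, I would construct the defining functions $\bar\phi^{u/s}$ following \cite[Lemma 5.1]{DyatlovResonanceProjectors}, which under absolute $1$-normal hyperbolicity produces defining functions satisfying $\sfH\bar\phi^u=-\bar w^u\bar\phi^u$ and $\sfH\bar\phi^s=\bar w^s\bar\phi^s$ with $\bar w^{u/s}>0$ on $\Gamma$, yielding~\eqref{EqECPDefBdyHyp} and~\eqref{EqECPDefBdyNumin}. For the symplectic condition~\eqref{EqECPDefBdySymp}, I would first extend $\Gamma_\vee$ to a conic codimension-$2$ submanifold $\wh\Gamma\subset T^*M^\circ\setminus o$ by removing the defining equation $G_{\bhm,\bha}=0$ of $\Sigma$ and retaining only the vanishing of (the conic, stationary extensions of) $\bar\phi^u$ and $\bar\phi^s$. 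One then checks that $\wh\Gamma$ is symplectic: this is classical for Kerr (it follows, for example, from the Mino/Carter formulation in which the trapped radius sphere is a symplectic leaf of the reduced system), and is equivalent to the non-vanishing of $\{\bar\phi^u,\bar\phi^s\}$ at $\Gamma$. A choice of orientation in the definitions then gives positivity, and homogeneity under dilation (together with $H_{G_{\bhm,\bha}}\wh\rho=0$) gives the rescaled form~\eqref{EqECPDefBdySymp}.

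The main obstacle is the bookkeeping in the second step: Dyatlov's normal-hyperbolicity estimates are stated on $S^*M^\circ$ modulo time translation, and one must verify that, after passing to the quotient identified with $\cX$, the rates computed with respect to $\sfH$ (parameterized by $t$) inherit the eventual absolute $r$-normal hyperbolicity. Additionally, one must ensure that the defining functions provided by \cite[Lemma 5.1]{DyatlovResonanceProjectors} can be taken simultaneously $\R_+$-homogeneous of degree zero and $\pa_t$-invariant, so that they descend to smooth functions on $\cX\subset\Scu^*_{\pa M}M$ with smoothness up to and including $\Gamma$; this is routine given the explicit construction via eigenvectors of the linearized Poincar\'e map, but requires keeping the dilation and time-translation symmetries intact throughout.
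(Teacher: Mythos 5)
Your proposal is correct and follows essentially the same route as the paper, which gives no proof but simply states the proposition as a reformulation of Dyatlov's results in \cite[\S3.2]{DyatlovWaveAsymptotics}; the reduction by stationarity and fiber scaling that you spell out is precisely the identification of $\cX$ with the reduced spacetime phase space that the paper leaves implicit, and the construction of $\bar\phi^{u/s}$ via \cite[Lemma~5.1]{DyatlovResonanceProjectors} together with the symplectic-extension characterization of~\eqref{EqECPDefBdySymp} matches the footnotes in \S\ref{SsES}. (One small imprecision: orientability of $\bar\Gamma^{u/s}$ as manifolds does not follow solely from orientability of the normal line bundles but also uses orientability of $\cX$ itself, which holds here; what is actually needed for the existence of the global defining functions is the triviality of the conormal bundles, which in Kerr follows explicitly from Dyatlov's construction.)
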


Let $\Xi\in\CI(M)$, $\Xi>0$, denote a smooth conformal factor. We denote by
\[
  \nu_{\rm min} > 0
\]
the quantity defined in~\eqref{EqECPDefBdyNumin} with $\sfH_{p_0}:=\pm\Xi\wh\rho H_{G_{\bhm,\bha}}$ near $\Gamma^\pm$;\footnote{In the Schwarzschild case $\bha=0$ and for $\Xi=\rho^2$, we have $\nu_{\rm min}=6\sqrt{3}\bhm$; this can be read off from \cite[Proposition~3.8]{DyatlovWaveAsymptotics} and $\Xi=9\bhm^2$, $\sigma^{-1}H_{G_{\bhm,0}}t=6$ at $\Gamma$.} $\sfH_{p_0}=\Xi\sigma^{-1}(H_{G_{\bhm,\bha}}t)\cdot\sfH$ is a smooth positive multiple of $\sfH$.

This verifies assumptions~\eqref{ItECPNondeg}--\eqref{ItECPDefBdy} of Theorem~\ref{ThmECP}, and, by stationarity, also assumption~\eqref{ItECPMfdsInt} for the Kerr spacetime $(M,g_{\bhm,\bha})$, thereby providing us with microlocal estimates at the trapped set for bundle-valued wave equations on exact Kerr. The main advance in the present paper is that we can consider significant perturbations of the Kerr spacetime:

\begin{definition}
\label{DefKPert}
  Let $\alpha>0$. A metric $g\in\CI(M^\circ;S^2 T^*M^\circ)$ is an \emph{asymptotically (subextremal) Kerr metric} if there exist parameters $\bhm$ and $\bha\in(-\bhm,\bhm)$ of a subextremal Kerr black hole and a symmetric 2-tensor $\wt g$ such that
  \[
    g = g_{\bhm,\bha} + \wt g,\qquad
    \wt g\in \cA_\cuop^\alpha(M;S^2\,\Tcu^*M).
  \]
\end{definition}

In particular, smooth metrics $g\in\CI(M;S^2\,\Tcu^*M)$ with $g-g_{\bhm,\bha}\in\tau\CI$ are examples of asymptotically Kerr metrics, though the smoothness requirement is unnatural from the point of view of applications.

Combining the results of \S\ref{SsMF} (describing the null-geodesic flow near the trapped set of the asymptotically Kerr spacetime) with those of~\S\ref{SsEC} (microlocal estimates), we obtain:
\begin{thm}
\label{ThmK}
  Let $g=g_{\bhm,\bha}+\wt g$ denote an asymptotically subextremal Kerr metric, and let $G=g^{-1}$. Let $\Sigma:=G^{-1}(0)\subset\Tcu^*M\setminus o$ denote the characteristic set, and define $\cX$ as in~\eqref{EqKCharInfty}. Define the trapped set $\Gamma$ as in~\eqref{EqKCharInfty} and \eqref{EqKGammaInfty}; let further $\bar\Gamma^{u/s}\subset\cX$ denote the unstable/stable manifold of $\Gamma$ inside of $\cX$, and denote by $\Gamma^s_0=[0,\infty)_\tau\times\bar\Gamma^s\subset\Scu^*M$ the stationary extension of $\bar\Gamma^s$. Denote the speed $1$ rescaling of the Hamilton vector field by
  \[
    \sfH := \tfrac{1}{H_G t}H_G \in (\CI+\cA_\cuop^\alpha)\Vcu(\ol{\Tcu^*}M),
  \]
  defined in a neighborhood of $\Gamma$. Then:
  \begin{enumerate}
  \item\label{ItKMfds} (Existence and smoothness of the stable manifold.) There exists a subset $\Gamma^s\subset\Sigma$, with $\Gamma^s\cap\cX=\bar\Gamma^s$ and with $\Gamma^s\cap S^*M^\circ$ a smooth manifold, such that $\Gamma^s$ approaches $\Gamma^s_0$ in an $\cA_\cuop^\alpha$ sense (see the statement of part~\eqref{ItMMapU} of Theorem~\usref{ThmMMap}), and such that $\sfH$ is tangent to $\Gamma^s$.
  \item\label{ItKEst} (Microlocal estimates.) Let $E\to M$ be a vector bundle, let $P_0\in\Psicu^2(M;E)$ denote a classical cusp ps.d.o.\ with $\sigma^2_\cuop(P_0)=\Xi G_{\bhm,\bha}$, and let $\wt P\in\tau^\alpha\Psi_\infty^2(M;E)$ denote an operator with $\sigma^2(\wt P)=\Xi(G-G_{\bhm,\bha})$; let
  \[
    P = P_0 + \wt P.
  \]
  Suppose that
  \begin{equation}
  \label{EqKSubpr}
    \sup_\Gamma \sfp_1 < \half\nu_{\rm min},\quad
    \sfp_1 := \pm\wh\rho\sigma_\cuop^1\Bigl(\frac{1}{2 i}(P_0-P_0^*)\Bigr)\ \text{on}\ \Gamma^\pm.
  \end{equation}
  Then the conclusions of Theorem~\usref{ThmECP} hold for $P$.
  \end{enumerate}
  If $\wt g\in\cA^\alpha_\bop(M;S^2\,\Tcu^*M)$, then part~\eqref{ItKMfds} holds with $\cA_\bop$ in place of $\cA_\cuop$.
\end{thm}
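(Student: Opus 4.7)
The plan is to combine the dynamical Theorem~\ref{ThmMFlow} (for part~\eqref{ItKMfds}) with the microlocal Theorem~\ref{ThmECP} (for part~\eqref{ItKEst}), after matching the two frameworks. The key dictionary is that on $t\geq 1$ the cusp conormal class $\cA_\cuop^\alpha$ coincides with $\tau^\alpha\CI_b(M^\circ)=\rho\CI_b$ for the weight $\rho=\tau^\alpha=\la t\ra^{-\alpha}$ used in~\S\ref{SM}, and likewise $\cA_\bop^\alpha$ corresponds to $\rho\cC^\infty_{b,\bop}$, with $\rho$ satisfying $|(t\pa_t)^k\rho|\leq C_k\rho$.

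For part~\eqref{ItKMfds}, I would work on a conic neighborhood of $\Gamma_\vee$ in $\Tcu^*M\setminus o$, passed to $\Scu^*M$ near $\Gamma$. By stationarity of $g_{\bhm,\bha}$, fixing a slice $\cY\subset\Scu^*_{\pa M}M$ transverse to $\pa_t$ identifies a neighborhood of $\Gamma$ in $\Scu^*M$ with $\R_t\times\cY$, and in this product the Kerr model is $\sfH_0=\pa_t+\bar V$ with $\bar V\in\cV(\cY)$ tangent to $\Gamma$. Proposition~\ref{PropKNhyp} provides eventual absolute $r$-normal hyperbolicity of $\bar V$ at $\Gamma$, verifying~\eqref{ItMFMfd}--\eqref{ItMFTimeOne}. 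The perturbation $\wt V=\sfH-\sfH_0$ inherits $\cA_\cuop^\alpha$-regularity from $\wt g$, and satisfies $\wt V t\equiv 0$ since $\sfH t=\sfH_0 t=1$; this is~\eqref{ItMFTime}--\eqref{ItMFPert}. Theorem~\ref{ThmMFlow} then produces $\Gamma^s$ as an $\cA_\cuop^\alpha$-graph over $\Gamma^s_0$ with $\sfH$ tangent to it; the closed-manifold assumption on $\cY$ is inessential since the whole construction is local near $\Gamma$ (cf.\ the footnote after~\eqref{EqMSpacetime}). The last sentence of the theorem is obtained by invoking instead the $\cC^\infty_{b,\bop}$-version of Theorem~\ref{ThmMFlow}.

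For part~\eqref{ItKEst}, I would verify assumptions~\eqref{ItECPNondeg}--\eqref{ItECPDefBdy} and the relaxed~\eqref{ItECPMfdsInt2} of Theorem~\ref{ThmECP}. Non-degeneracy~\eqref{ItECPNondeg} holds because $\Sigma$ is smooth near $\Gamma$; \eqref{ItECPWeight} is exactly~\eqref{EqKHamFbdf}; and~\eqref{ItECPHam} follows from $\sfH_{p_0}t=\Xi\wh\rho(H_{G_{\bhm,\bha}}t)>0$ on $\Gamma$. Assumptions~\eqref{ItECPMfdsBdy} and~\eqref{ItECPDefBdy} are the content of Proposition~\ref{PropKNhyp}. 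Assumption~\eqref{ItECPMfdsInt2} is provided by part~\eqref{ItKMfds} for $\Gamma^s$ and by its time-reversed analogue for $\Gamma^u$ (using the unstable part of Theorem~\ref{ThmMFlow}), together with the fact that $\cA_\cuop^\alpha$-graphs give defining functions with $\phi^{u/s}-\bar\phi^{u/s}\in\cA_\cuop^\alpha$. The subprincipal bound~\eqref{EqKSubpr} matches~\eqref{EqECPSubpr} since $m=2$ and $\wh\rho^{m-1}=\wh\rho=|\sigma|^{-1}$; the $\tau^\alpha\Psi_\infty^1$-contribution to $\frac{1}{2i}(P-P^\ast)$ has symbol vanishing at $\tau=0$ and hence does not affect the supremum over $\Gamma$. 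Theorem~\ref{ThmECP}, applied in its form allowing the non-smooth term $\wt P\in\tau^\alpha\Psi_\infty^2$ via~\eqref{EqECPNonsmooth}, then yields the microlocal estimate. The main obstacle is not conceptual but technical: making sure that the choice of local trivialization used to identify $\Scu^*M$ near $\Gamma$ with $\R_t\times\cY$ does not pollute the regularity class of $\wt V$ (it does not, because changes of trivialization are stationary and thus act trivially on the $\cA_\cuop^\alpha$ remainder), and tracking that all constructions respect this regularity uniformly down to $\tau=0$.
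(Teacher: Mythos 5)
Your outline correctly identifies the two ingredients (Theorem~\ref{ThmMFlow} for part~\eqref{ItKMfds} and Theorem~\ref{ThmECP} for part~\eqref{ItKEst}) and the weight dictionary between $\cA_\cuop^\alpha$ and $\rho\CI_b$, but there is a genuine gap in part~\eqref{ItKMfds}: you cannot take $\wt V=\sfH-\sfH_0$ as the perturbation and feed it directly into Theorem~\ref{ThmMFlow}. The dynamical theorem requires a vector field on a product $\cM=\R_t\times\cX$ (here $\cX=\Sigma_0\cap\Scu^*_{\pa M}M$, the boundary characteristic set) whose perturbative part lies in $\rho\CI_b(\cM;T\cX)$, i.e.\ is tangent to the constant-$t$ slices of $\cM$. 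However, $\sfH$ is tangent to the perturbed characteristic set $\Sigma$, a hypersurface in $\Scu^*M$ which is \emph{not} stationary for $\tau>0$ and hence does not coincide with $\Sigma_0=\cM$ away from $\pa M$; thus $\sfH-\sfH_0$ has a component transverse to $\Sigma_0$ and is not a perturbation of $V_0$ in the required sense. Nor can you sidestep this by working on the ambient $\Scu^*M$ near $\Gamma$: the direction transverse to $\Sigma$ is preserved by the flow (since $\sfH$ preserves the level sets of the principal symbol) and carries a zero Lyapunov exponent not contained in $T\Gamma$, so the flow is not normally hyperbolic in the ambient cosphere bundle, and the invariant manifold theorems of \S\ref{SM} do not apply there.

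The missing step is precisely what Lemma~\ref{LemmaKChar} supplies: near $\tau=0$, $\Sigma\cap S^*M^\circ$ is an $\cA_\cuop^\alpha$-graph over $\cM$, and the graphing function $f$ (with its derivatives) has the appropriate decay. This gives a $t$-preserving diffeomorphism $\Phi\colon\cM\to\Sigma\cap S^*M^\circ$, $\varpi\mapsto(\varpi,f(\varpi))$, along which one pulls $\sfH$ back to $V=\Phi^*\sfH=V_0+\wt V$ with $\wt V\in\rho\CI_b(\cM;T\cX)$; this $\wt V$ is \emph{not} $\sfH-\sfH_0$, since the pullback picks up contributions from $f$ and its derivatives. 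Only then does $-V$ (with the sign dictated by the convention $V_0 t=-1$ in \S\ref{SsMF}) satisfy the hypotheses of Theorem~\ref{ThmMFlow}. Your verification of the hypotheses of Theorem~\ref{ThmECP} in part~\eqref{ItKEst} is essentially correct once this is in place, though you should also note, as the paper does, that one applies Theorem~\ref{ThmECP} to $P$ on $\Sigma^+$ and to $-P$ on $\Sigma^-$, so that the rescaled Hamilton flow is future-directed in both components.
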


\begin{rmk}
\label{RmkKPertModel}
  Due to the structural stability of $r$-normally hyperbolic trapping \cite[Theorem~(4.1)]{HirschPughShubInvariantManifolds}, and due to the fact that the proof of Theorem~\ref{ThmECP} for any fixed level of Sobolev regularity only requires a finite degree of smoothness of the coefficients of $P$, one can consider a much more general situation: suppose $B$ is an open subset of a Banach space of parameters (which in Theorem~\ref{ThmK} was taken to be $B=\{(\bhm,\bha)\colon |\bha|<\bhm\}$) smoothly parameterizing $\CI$ stationary metrics by assigning $B\in b\mapsto g_b$, and let $b_0\in B$ be such that $g_{b_0}$ is a subextremal Kerr metric. Then, for regularity and weights confined to compact subsets of $\R$, the estimates of Theorem~\ref{ThmK} hold at the trapped set of $g_b$ when $b$ is close to $b_0$.
\end{rmk}

\begin{rmk}
\label{RmkKSubpr}
  The main calculation required to apply the microlocal estimates is the verification of the subprincipal symbol condition~\eqref{EqKSubpr}. This was verified for the linearization of the gauge-fixed Einstein equation in~\cite[\S9.1]{HintzVasyKdSStability} at the Schwarzschild--de~Sitter metric, but the calculations there work directly for Schwarzschild metrics as well. By continuity, the condition~\eqref{EqKSubpr} is verified also for slowly rotating Kerr black holes. More generally, it holds for wave equations on tensors on slowly rotating Kerr, as follows in the same manner from~\cite{HintzPsdoInner}, and can in fact be explicitly verified in the \emph{full} subextremal range by using the relationship, explained in~\cite{HintzPsdoInner}, between~\eqref{EqKSubpr} and parallel transport along trapped null-geodesics, with the latter being described by Marck~\cite{MarckParallelNull}; the details will be presented elsewhere.
\end{rmk}

The results of~\S\ref{SsMF} take place on the stationary manifold
\begin{equation}
\label{EqKcM}
  \cM := \R_t\times\cX.
\end{equation}
One would like to take $\cM$ to be the characteristic set $\Sigma\cap S^*M^\circ$; however, since $g$ is not stationary when $\wt g\neq 0$, we need to relate $\Sigma\cap M^\circ$ and~\eqref{EqKcM}. To this effect, we have the following general result:

\begin{lemma}
\label{LemmaKChar}
  With $M=[0,\infty)_\tau\times X$, $X$ compact, let $E\to M$ denote a vector bundle with zero section $o$, and let $S E\to M$ be the fiber bundle with fibers $S E_p:=(E_p\setminus o_p)/\R_+$. Let $p_0\in\CI(S E)$, and suppose that $d p_0\neq 0$ on $\fX:=p_0^{-1}(0)\cap S E|_{\pa M}$, which we assume to be non-empty. Put $\fM_0:=[0,\infty)_\tau\times\fX\subset S E$, which is thus smooth and of codimension $1$. Let moreover $\alpha>0$ and $\wt p\in\cA_\cuop^\alpha(S E)$, and let
  \[
    p = p_0 + \wt p,\qquad \fM:=p^{-1}(0)\subset S E.
  \]
  Let $\varpi\in\fX$, and let $\cU\times(-1,1)$ be a tubular neighborhood of an open neighborhood $\cU\subset\fX$ of $\varpi$ within $S E|_{\pa M}$; extend this to a tubular neighborhood $([0,\infty)\times\cU)\times(-1,1)$ of $[0,\infty)_\tau\times\cU\subset\fM_0$ inside $S E$.\footnote{If the normal bundle of $\fX$ is orientable, one can take $\cU=\fX$ simply.} Then there exists $\tau_0>0$ such that in this neighborhood, $\fM\cap {\{\tau<\tau_0\}}$ is the graph over $\fM_0\cap {\{\tau<\tau_0\}}$ of a function $f\in\cA_\cuop^\alpha(\fM_0)$. More generally, if $k\in\N$ is such that $\Diffcu^k(S E)\wt p\subset\tau^\alpha L^\infty$, then $\Diffcu^k(\fM_0)f\subset\tau^\alpha L^\infty$ (with continuous dependence).
\end{lemma}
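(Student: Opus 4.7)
The plan is to reduce the claim to a Banach fixed-point problem in $\cA_\cuop^\alpha$. Working in the given local coordinates $(\tau, y, s)$ on the tubular neighborhood, with $\fM_0 = \{s = 0\}$, the hypothesis $d p_0 \neq 0$ on $\fX$ allows me (after shrinking $\cU$ and, if necessary, reparameterizing the tubular coordinate $s$) to arrange that $\pa_s p_0 \neq 0$ on a neighborhood of $\cU \times \{0\} \times \{0\}$; by continuity it is then non-vanishing for $\tau < \tau_0$ with $\tau_0$ sufficiently small. The classical implicit function theorem applied to the smooth function $p_0$ alone yields a smooth $f_0(\tau, y)$ solving $p_0(\tau, y, f_0) = 0$ with $f_0(0, y) = 0$; thus $f_0 \in \tau\CI \subset \cA_\cuop^\alpha$ (and in the stationary case of interest in the paper, $p_0$ is $\tau$-independent in these coordinates so $f_0 \equiv 0$).

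Writing the sought graph function as $f = f_0 + u$ and Taylor-expanding
\[
  p(\tau, y, f_0 + u) = u\,a_0(\tau, y, u) + \wt p(\tau, y, f_0 + u), \quad a_0(\tau, y, u) := \int_0^1 (\pa_s p_0)(\tau, y, f_0 + t u)\,dt,
\]
with $a_0$ smooth and non-vanishing, the equation $p = 0$ becomes the fixed-point problem $u = T(u)$, where
\[
  T(u)(\tau, y) := -a_0(\tau, y, u(\tau, y))^{-1}\,\wt p(\tau, y, f_0(\tau, y) + u(\tau, y)).
\]
I would show that, for $\tau_0 > 0$ small, $T$ is a contraction on the closed unit ball in $\cA_\cuop^\alpha(\fM_0 \cap \{\tau < \tau_0\})$. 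The required mapping property---that the Nemytskii operator $h \mapsto \Phi(\tau, y, f_0 + h)$ preserves $\cA_\cuop^\alpha$ for $\Phi$ smooth (applied to $a_0^{-1}$) or conormal (applied to $\wt p$)---follows by iteratively applying the cusp vector fields $\tau^2 \pa_\tau,\,\pa_y,\,\pa_s$ and invoking the chain rule (Fa\`a di Bruno), keeping track of the $\tau^\alpha$ weights. Contractivity then follows from the mean value theorem: $T(u) - T(u') = (u - u') \int_0^1 \pa_s(\wt p/a_0)(\tau, y, f_0 + \theta u + (1{-}\theta)u')\,d\theta$, and the integrand is bounded in sup norm by $C \tau_0^\alpha$ since $\pa_s \wt p \in \cA_\cuop^\alpha$. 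This produces a unique fixed point $u \in \cA_\cuop^\alpha$, and the graph $\{s = f_0 + u\}$ is the sought representation; uniqueness of the graph parameterization (i.e.\ that $\fM$ intersects the tubular neighborhood only in this graph) follows from the persistence of $\pa_s p \neq 0$ on the slab $\tau < \tau_0$.

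For the finite-regularity variant with $\Diffcu^k(SE)\wt p \subset \tau^\alpha L^\infty$, the identical contraction runs in the analogous Banach space of functions whose first $k$ cusp derivatives lie in $\tau^\alpha L^\infty$, with the composition estimates established by induction on $k$ via the cusp Leibniz and chain rules---each application of a cusp vector field produces only lower-order derivative terms controlled by the induction hypothesis. \emph{The main obstacle} I anticipate is bookkeeping the Nemytskii composition estimate on $\cA_\cuop^\alpha$ and its finite-regularity analogues: one must verify that composition with a conormal $\wt p$ on the outside and a conormal $u$ on the inside produces a conormal output of the correct order, depending continuously on $u$ in the relevant norm. This is standard but requires some care in tracking how cusp derivatives (in particular $\tau^2\pa_\tau$) interact with the fiber variable $s$ and the inner argument $f_0 + u$.
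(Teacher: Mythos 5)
Your proposal is correct, but it takes a somewhat different route from the paper's one-line proof, and it is worth comparing the two. The paper chooses coordinates on $SE|_{\pa M}$ in which $p_0=x^0$ outright (possible since $d p_0\neq 0$ on $\fX$), so the graph equation is simply $x^0+\wt p(\tau,x^0,x')=0$; it then rescales $x^0=\tau^\alpha y$, $\wt q=\tau^{-\alpha}\wt p\in\cA_\cuop^0$, so the equation $y+\wt q(\tau,\tau^\alpha y,x')=0$ is solved by a contraction on a fixed $L^\infty$ $y$-ball, with the contraction constant manifestly $\cO(\tau^\alpha)$ because $\pa_y[\wt q(\tau,\tau^\alpha y,x')]=\tau^\alpha(\pa_{x^0}\wt q)$; higher (cusp-)regularity is then read off by differentiating the fixed-point identity. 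You instead solve $p_0=0$ by the usual IFT to get $f_0$, Taylor-expand around the graph of $f_0$, and run the contraction directly in $\cA_\cuop^\alpha$ (or its finite-order Banach truncations). Both routes work. The paper's reduction is noticeably lighter on bookkeeping: the rescaling $x^0=\tau^\alpha y$ converts the unknown to a genuinely bounded quantity and makes the Nemytskii composition step you identify as the ``main obstacle'' essentially trivial (one composes $\wt q$, which has bounded cusp derivatives, with a bounded $C^\infty_\cuop$ inner function). Your version does need that composition estimate, and you correctly observe that it follows by iterating cusp vector fields; your finite-regularity bootstrap is also sound, because every term involving the top-order derivative of $u$ comes with a coefficient $a_0^{-1}\pa_s\wt p=\cO(\tau^\alpha)$. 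One small notational slip: as written, $a_0$ is a function of $(\tau,y,u)$, so the expression $\pa_s(\wt p/a_0)(\tau,y,f_0+\theta u+(1{-}\theta)u')$ does not literally parse; you should first reparameterize $\tilde a_0(\tau,y,s):=\int_0^1(\pa_s p_0)(\tau,y,(1{-}t)f_0+ts)\,dt$ so that $T(u)=-\tilde a_0(\tau,y,f_0+u)^{-1}\wt p(\tau,y,f_0+u)$, after which the mean-value estimate reads correctly. This is cosmetic and does not affect the validity of the argument.
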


This also holds true if one replaces $\cA_\cuop$ by $\cA_\bop$ throughout the statement of the lemma.

\begin{proof}[Proof of Lemma~\usref{LemmaKChar}]
  We may assume that in local coordinates $(x^0,x')\in\R\times\R^{n-2+\rank E}$ on $S E|_{\pa M}$, we have $p_0=x^0$, so $\fX=\{x^0=0\}$ and $\fM_0=\{x^0=0\}$ in the product coordinate system; the defining equation for $\fM$ reads $p(\tau,x^0,x')=x^0+\wt p(\tau,x^0,x')=0$. Write $x^0=\tau^\alpha y$ and $\wt q=\tau^{-\alpha}\wt p$, which satisfies $\Diffcu^k\cdot\wt q\subset L^\infty$. We then need to solve $y+\wt q(\tau,\tau^\alpha y,x')=0$, which can be solved using the contraction mapping principle on the $y$-ball of radius $\|\wt q\|_{L^\infty}+1$. Higher regularity follows by differentiating this equation along $\tau^2\pa_\tau$ and $\pa_{x'}$.
\end{proof}

\begin{proof}[Proof of Theorem~\usref{ThmK}]
  By the previous lemma, $\Sigma\cap S^*M^\circ$ is an $\cA_\cuop^\alpha$ graph over $\cM\subset S^*M^\circ$ near $\tau=0$. Let $\Phi\colon\cM\to\Sigma\cap S^*M^\circ$ be the diffeomorphism $\cM\ni\varpi\mapsto(\varpi,f(\varpi))$ (using the notation and the collar neighborhood of Lemma~\ref{LemmaKChar}). Then
  \[
    \Phi^*\sfH =: V = V_0 + \wt V,
  \]
  where $V_0=\tfrac{1}{H_{G_{\bhm,\bha}}t}H_{G_{\bhm,\bha}}$ is the stationary model, and $\wt V\in\rho\CI_b(\cM;T\cX)$, $\rho(t)=t^{-\alpha}$, is the perturbation. The vector field $-V$ thus satisfies the assumptions of Theorem~\ref{ThmMFlow}, proving part~\eqref{ItKMfds}.
  
  Theorem~\ref{ThmMFlow} also produces a (non-unique) unstable manifold $\Gamma^u\subset\Sigma$, with $\Gamma^u\cap\cX=\bar\Gamma^u$, which is an $\cA_\cuop^\alpha$ graph over $\Gamma^u_0=[0,\infty)_\tau\times\bar\Gamma^u$ near $\tau=0$, and to which $\sfH$ is tangent; thus, we get defining functions $\phi^{u/s}\in(\CI+\cA_\cuop^\alpha)(\Scu^*M)$ of $\Gamma^{u/s}$ within $\Sigma$. Thus, assumption~\eqref{ItECPMfdsInt2} of Theorem~\ref{ThmECP} is verified as well. In $\Sigma^+$, we can thus apply this theorem to the operator $P$, while in $\Sigma^-$, we apply it to $-P$ since the Hamilton vector field of $-P$ is future null in $\Sigma^-$. This proves part~\eqref{ItKEst}.
\end{proof}

\begin{rmk}
\label{RmkKdS}
  For those parameters of Kerr--de~Sitter black holes for which normal hyperbolicity was verified in~\cite{DyatlovWaveAsymptotics,VasyMicroKerrdS}, we can similarly analyze asymptotically Kerr--de~Sitter metrics $g$: in this case, natural metric perturbations $\wt g$ of such a Kerr--de~Sitter metric lie in the class $e^{-\alpha t}\cA_\cuop$ on the compactification~\eqref{EqKCpt} (or equivalently $\tau_e^\alpha\cA_\bop$ on the exponential compactification~\eqref{EqECPNhyptrExp}). Theorem~\ref{ThmK} then provides a description of the spacetime stable/unstable manifolds and gives microlocal trapping estimates even on function spaces with (sufficiently mild) exponential decay in $t$; see Remark~\ref{RmkECPNhyptr}.
\end{rmk}

\bibliographystyle{alpha}

\begin{thebibliography}{MSBV14}

\bibitem[AB15a]{AnderssonBlueHiddenKerr}
Lars Andersson and Pieter Blue.
\newblock {H}idden symmetries and decay for the wave equation on the {K}err
  spacetime.
\newblock {\em Annals of Mathematics}, 182:787--853, 2015.

\bibitem[AB15b]{AnderssonBlueMaxwellKerr}
Lars Andersson and Pieter Blue.
\newblock {U}niform energy bound and asymptotics for the {M}axwell field on a
  slowly rotating {K}err black hole exterior.
\newblock {\em Journal of Hyperbolic Differential Equations}, 12(04):689--743,
  2015.

\bibitem[Are12]{AretakisExtremalKerr}
Stefanos Aretakis.
\newblock {D}ecay of axisymmetric solutions of the wave equation on extreme
  {K}err backgrounds.
\newblock {\em Journal of Functional Analysis}, 263(9):2770--2831, 2012.

\bibitem[BBR10]{BonyBurqRamondTrapping}
Jean-Fran\c{c}ois Bony, Nicolas Burq, and Thierry Ramond.
\newblock Minoration de la r\'esolvante dans le cas captif.
\newblock {\em Comptes Rendus Mathematique}, 348(23--24):1279--1282, 2010.

\bibitem[BH08]{BonyHaefnerDecay}
Jean-Fran{\c{c}}ois Bony and Dietrich H{\"a}fner.
\newblock Decay and non-decay of the local energy for the wave equation on the
  de {S}itter--{S}chwarzschild metric.
\newblock {\em Communications in Mathematical Physics}, 282(3):697--719, 2008.

\bibitem[BS06]{BlueSterbenzSemilinear}
Pieter Blue and Jacob Sterbenz.
\newblock {U}niform decay of local energy and the semi-linear wave equation on
  {S}chwarzschild space.
\newblock {\em Communications in mathematical physics}, 268(2):481--504, 2006.

\bibitem[Chr07]{ChristiansonNonconc}
Hans Christianson.
\newblock Semiclassical non-concentration near hyperbolic orbits.
\newblock {\em Journal of Functional Analysis}, 246(2):145--195, 2007.

\bibitem[DHR16]{DafermosHolzegelRodnianskiSchwarzschildStability}
Mihalis Dafermos, Gustav Holzegel, and Igor Rodnianski.
\newblock The linear stability of the {S}chwarzschild solution to gravitational
  perturbations.
\newblock {\em Preprint, arXiv:1601.06467}, 2016.

\bibitem[DHR17]{DafermosHolzegelRodnianskiTeukolsky}
Mihalis Dafermos, Gustav Holzegel, and Igor Rodnianski.
\newblock {B}oundedness and decay for the {T}eukolsky equation on {K}err
  spacetimes {I}: the case $|a|\ll{M}$.
\newblock {\em Preprint, arXiv:1711.07944}, 2017.

\bibitem[DRSR16]{DafermosRodnianskiShlapentokhRothmanDecay}
Mihalis Dafermos, Igor Rodnianski, and Yakov Shlapentokh-Rothman.
\newblock Decay for solutions of the wave equation on {K}err exterior
  spacetimes {III}: {T}he full subextremal case {$\vert a\vert <M$}.
\newblock {\em Ann. of Math. (2)}, 183(3):787--913, 2016.

\bibitem[DSS11]{DonningerSchlagSofferPrice}
Roland Donninger, Wilhelm Schlag, and Avy Soffer.
\newblock A proof of {P}rice's law on {S}chwarzschild black hole manifolds for
  all angular momenta.
\newblock {\em Advances in Mathematics}, 226(1):484--540, 2011.

\bibitem[DSS12]{DonningerSchlagSofferSchwarzschild}
Roland Donninger, Wilhelm Schlag, and Avy Soffer.
\newblock On pointwise decay of linear waves on a {S}chwarzschild black hole
  background.
\newblock {\em Communications in Mathematical Physics}, 309(1):51--86, 2012.

\bibitem[Dya11a]{DyatlovQNMExtended}
Semyon Dyatlov.
\newblock Exponential energy decay for {K}err--de {S}itter black holes beyond
  event horizons.
\newblock {\em Mathematical Research Letters}, 18(5):1023--1035, 2011.

\bibitem[Dya11b]{DyatlovQNM}
Semyon Dyatlov.
\newblock Quasi-normal modes and exponential energy decay for the {K}err--de
  {S}itter black hole.
\newblock {\em Comm. Math. Phys.}, 306(1):119--163, 2011.

\bibitem[Dya12]{DyatlovAsymptoticDistribution}
Semyon Dyatlov.
\newblock Asymptotic distribution of quasi-normal modes for {K}err--de {S}itter
  black holes.
\newblock In {\em Annales Henri Poincar{\'e}}, volume~13, pages 1101--1166.
  Springer, 2012.

\bibitem[Dya15a]{DyatlovWaveAsymptotics}
Semyon Dyatlov.
\newblock Asymptotics of {L}inear {W}aves and {R}esonances with {A}pplications
  to {B}lack {H}oles.
\newblock {\em Comm. Math. Phys.}, 335(3):1445--1485, 2015.

\bibitem[Dya15b]{DyatlovResonanceProjectors}
Semyon Dyatlov.
\newblock Resonance projectors and asymptotics for {$r$}-normally hyperbolic
  trapped sets.
\newblock {\em J. Amer. Math. Soc.}, 28(2):311--381, 2015.

\bibitem[Dya16]{DyatlovSpectralGaps}
Semyon Dyatlov.
\newblock Spectral gaps for normally hyperbolic trapping.
\newblock {\em Ann. Inst. Fourier (Grenoble)}, 66(1):55--82, 2016.

\bibitem[DZ13]{DyatlovZworskiTrapping}
Semyon Dyatlov and Maciej Zworski.
\newblock Trapping of waves and null geodesics for rotating black holes.
\newblock {\em Physical Review D}, 88(8):084037, 2013.

\bibitem[DZ18]{DyatlovZworskiBook}
Semyon Dyatlov and Maciej Zworski.
\newblock Mathematical theory of scattering resonances.
\newblock {\em Book in progress, http://math.mit.edu/dyatlov/res/}, 2018.

\bibitem[Fen71]{FenichelInvariant}
Neil Fenichel.
\newblock {P}ersistence and {S}moothness of {I}nvariant {M}anifolds for
  {F}lows.
\newblock {\em Indiana University Mathematics Journal}, 21(3):193--226, 1971.

\bibitem[FKSY06]{FinsterKamranSmollerYauKerr}
Felix Finster, Niky Kamran, Joel Smoller, and Shing-Tung Yau.
\newblock {D}ecay of {S}olutions of the {W}ave {E}quation in the {K}err
  {G}eometry.
\newblock {\em Communications in Mathematical Physics}, 264(2):465--503, 2006.

\bibitem[G{\'e}r91]{GerardDefect}
Patrick G{\'e}rard.
\newblock Microlocal defect measures.
\newblock {\em Communications in Partial Differential Equations},
  16(11):1761--1794, 1991.

\bibitem[GS87]{GerardSjostrandHyperbolic}
Christian G{\'e}rard and Johannes Sj{\"o}strand.
\newblock Semiclassical resonances generated by a closed trajectory of
  hyperbolic type.
\newblock {\em Communications in Mathematical Physics}, 108(3):391--421, 1987.

\bibitem[Hin16]{HintzQuasilinearDS}
Peter Hintz.
\newblock Global analysis of quasilinear wave equations on asymptotically de
  {S}itter spaces.
\newblock {\em Annales de l'Institut Fourier}, 66(4):1285--1408, 2016.

\bibitem[Hin17]{HintzPsdoInner}
Peter Hintz.
\newblock Resonance expansions for tensor-valued waves on asymptotically
  {K}err--de {S}itter spaces.
\newblock {\em J. Spectr. Theory}, 7:519--557, 2017.

\bibitem[Hin18]{HintzKNdSStability}
Peter Hintz.
\newblock {N}on-linear {S}tability of the {K}err--{N}ewman--de {S}itter
  {F}amily of {C}harged {B}lack {H}oles.
\newblock {\em Annals of PDE}, 4(1):11, Apr 2018.

\bibitem[HPS77]{HirschPughShubInvariantManifolds}
Morris~W. Hirsch, Charles~C. Pugh, and Michael Shub.
\newblock {\em Invariant manifolds}, volume 583.
\newblock Springer-Verlag, Berlin-New York, 1977.

\bibitem[HV14]{HintzVasyNormHyp}
Peter Hintz and Andr{\'a}s Vasy.
\newblock Non-trapping estimates near normally hyperbolic trapping.
\newblock {\em Math. Res. Lett.}, 21(6):1277--1304, 2014.

\bibitem[HV15]{HintzVasySemilinear}
Peter Hintz and Andr{\'a}s Vasy.
\newblock Semilinear wave equations on asymptotically de {S}itter, {K}err--de
  {S}itter and {M}inkowski spacetimes.
\newblock {\em Anal. PDE}, 8(8):1807--1890, 2015.

\bibitem[HV16]{HintzVasyQuasilinearKdS}
Peter Hintz and Andr{\'a}s Vasy.
\newblock {G}lobal {A}nalysis of {Q}uasilinear {W}ave {E}quations on
  {A}symptotically {K}err--de {S}itter {S}paces.
\newblock {\em International Mathematics Research Notices},
  2016(17):5355--5426, 2016.

\bibitem[HV18a]{HintzVasyKdsFormResonances}
Peter Hintz and Andr{\'a}s Vasy.
\newblock {A}symptotics for the wave equation on differential forms on
  {K}err--de {S}itter space.
\newblock {\em Journal of Differential Geometry}, 110(2):221--279, 2018.

\bibitem[HV18b]{HintzVasyKdSStability}
Peter Hintz and Andr{\'a}s Vasy.
\newblock {T}he global non-linear stability of the {K}err--de {S}itter family
  of black holes.
\newblock {\em Acta mathematica}, 220:1--206, 2018.

\bibitem[KW87]{KayWaldSchwarzschild}
Bernard~S. Kay and Robert~M. Wald.
\newblock Linear stability of {S}chwarzschild under perturbations which are
  non-vanishing on the bifurcation 2-sphere.
\newblock {\em Classical and Quantum Gravity}, 4(4):893, 1987.

\bibitem[LP93]{LionsPaulDefect}
Pierre-Louis Lions and Thierry Paul.
\newblock {S}ur les mesures de {W}igner.
\newblock {\em Revista Matem{\'a}tica Iberoamericana}, 9(3):553--618, 1993.

\bibitem[Luk13]{LukKerrNonlinear}
Jonathan Luk.
\newblock The null condition and global existence for nonlinear wave equations
  on slowly rotating {K}err spacetimes.
\newblock {\em Journal of the European Mathematical Society}, 15(5):1629--1700,
  2013.

\bibitem[Mar83]{MarckParallelNull}
Jean-Alain Marck.
\newblock {P}arallel-tetrad on null geodesics in {K}err--{N}ewman space-time.
\newblock {\em Physics Letters A}, 97(4):140 -- 142, 1983.

\bibitem[Mel94]{MelroseEuclideanSpectralTheory}
Richard~B. Melrose.
\newblock Spectral and scattering theory for the {L}aplacian on asymptotically
  {E}uclidian spaces.
\newblock In {\em Spectral and scattering theory ({S}anda, 1992)}, volume 161
  of {\em Lecture Notes in Pure and Appl. Math.}, pages 85--130. Dekker, New
  York, 1994.

\bibitem[MM99]{MazzeoMelroseFibred}
Rafe~R. Mazzeo and Richard~B. Melrose.
\newblock Pseudodifferential operators on manifolds with fibred boundaries.
\newblock {\em Asian J. Math.}, 2(4):833--866, 1999.

\bibitem[MSBV14]{MelroseSaBarretoVasySdS}
Richard~B. Melrose, Ant{\^o}nio S{\'a}~Barreto, and Andr{\'a}s Vasy.
\newblock Asymptotics of solutions of the wave equation on de
  {S}itter--{S}chwarzschild space.
\newblock {\em Communications in Partial Differential Equations},
  39(3):512--529, 2014.

\bibitem[NZ13]{NonnenmacherZworskiDecay}
St{\'e}phane Nonnenmacher and Maciej Zworski.
\newblock Decay of correlations for normally hyperbolic trapping.
\newblock {\em Inventiones mathematicae}, 200(2):345--438, 2013.

\bibitem[Ral69]{RalstonLocalized}
James~V. Ralston.
\newblock Solutions of the wave equation with localized energy.
\newblock {\em Communications on Pure and Applied Mathematics}, 22(6):807--823,
  1969.

\bibitem[SBZ97]{SaBarretoZworskiResonances}
Ant{\^o}nio S{\'a}~Barreto and Maciej Zworski.
\newblock Distribution of resonances for spherical black holes.
\newblock {\em Mathematical Research Letters}, 4:103--122, 1997.

\bibitem[Tar90]{TartarHMeasures}
Luc Tartar.
\newblock {$H$}-measures, a new approach for studying homogenisation,
  oscillations and concentration effects in partial differential equations.
\newblock {\em Proc. Roy. Soc. Edinburgh Sect. A}, 115(3-4):193--230, 1990.

\bibitem[Tat13]{TataruDecayAsympFlat}
Daniel Tataru.
\newblock Local decay of waves on asymptotically flat stationary space-times.
\newblock {\em American Journal of Mathematics}, 135(2):361--401, 2013.

\bibitem[Toh12]{TohaneanuKerrStrichartz}
Mihai Tohaneanu.
\newblock Strichartz estimates on {K}err black hole backgrounds.
\newblock {\em Trans. Amer. Math. Soc.}, 364(2):689--702, 2012.

\bibitem[TT11]{TataruTohaneanuKerrLocalEnergy}
Daniel Tataru and Mihai Tohaneanu.
\newblock A local energy estimate on {K}err black hole backgrounds.
\newblock {\em Int. Math. Res. Not.}, (2):248--292, 2011.

\bibitem[Vas]{VasyMinicourse}
Andr{\'a}s Vasy.
\newblock A minicourse on microlocal analysis for wave propagation.
\newblock In {\em Asymptotic Analysis in General Relativity}, London
  Mathematical Society Lecture Note Series. Cambridge University Press, to
  appear.

\bibitem[Vas13]{VasyMicroKerrdS}
Andr{\'a}s Vasy.
\newblock Microlocal analysis of asymptotically hyperbolic and {K}err--de
  {S}itter spaces (with an appendix by {S}emyon {D}yatlov).
\newblock {\em Invent. Math.}, 194(2):381--513, 2013.

\bibitem[Wal79]{WaldSchwarzschild}
Robert~M. Wald.
\newblock Note on the stability of the {S}chwarzschild metric.
\newblock {\em Journal of Mathematical Physics}, 20(6):1056--1058, 1979.

\bibitem[Wun12]{WunschMild}
Jared Wunsch.
\newblock Resolvent estimates with mild trapping.
\newblock {\em Journ{\'e}es {\'e}quations aux d{\'e}riv{\'e}es partielles},
  2012:1--15, 2012.

\bibitem[WZ11]{WunschZworskiNormHypResolvent}
Jared Wunsch and Maciej Zworski.
\newblock Resolvent estimates for normally hyperbolic trapped sets.
\newblock In {\em Annales Henri Poincar{\'e}}, volume~12, pages 1349--1385.
  Springer, 2011.

\bibitem[Zwo17]{ZworskiResonanceReview}
Maciej Zworski.
\newblock Mathematical study of scattering resonances.
\newblock {\em Bulletin of Mathematical Sciences}, 7(1):1--85, 2017.

\end{thebibliography}

\end{document}